\documentclass[11pt,a4paper]{article}

\usepackage{geometry}
\usepackage{amsmath,amssymb,amsfonts,amsthm}
\usepackage{latexsym}
\usepackage{indentfirst}
\usepackage{graphicx}
\usepackage{mathrsfs}
\usepackage{xcolor}
\usepackage[colorlinks=true,linkcolor=blue,citecolor=blue,urlcolor=blue]{hyperref}

\numberwithin{equation}{section}

\theoremstyle{plain}
\newtheorem{Thm}{Theorem}[section]
\newtheorem{Lem}[Thm]{Lemma}

\theoremstyle{definition}
\newtheorem{Def}[Thm]{Definition}

\theoremstyle{remark}

\newcommand{\N}{\mathbb{N}}

\newcommand{\R}{\mathbb{R}}

\begin{document}

\title{Infinitely many sign-changing solutions for logarithmic Schr\"odinger equations via an \(L^p\)-perturbation approach}

\author{Chen Huang, Zhipeng Yang\thanks{Corresponding author: yangzhipeng326@163.com}, and Jiazheng Zhou}

\date{}

\AtEndDocument{%
  \par
  \bigskip

 \noindent
  \textbf{Chen Huang}\\[0.2em]
  \textsc{School of Mathematics, University of Shanghai for Science and Technology, Shanghai, China}\\
  \textsc{Yunnan Key Laboratory of Modern Analytical Mathematics and Applications, Kunming, China}\\[0.3em]
  \textit{E-mail address}: \texttt{chenhuangmath111@163.com}%
  
   \bigskip
  \noindent
  \textbf{Zhipeng Yang}\\[0.2em]
  \textsc{Department of Mathematics, Yunnan Normal University, Kunming, China}\\
  \textsc{Yunnan Key Laboratory of Modern Analytical Mathematics and Applications, Kunming, China}\\[0.3em]
  \textit{E-mail address}: \texttt{yangzhipeng326@163.com}%
  
 \bigskip
   \noindent
  \textbf{Jiazheng Zhou}\\[0.2em]
  \textsc{Departamento de Matemática, Universidade de Brasília, Brasília, Brazil}\\
  \textit{E-mail address}: \texttt{zhou@mat.unb.br}\\[1.5em]
}
\date{}
\maketitle
\begin{abstract}
We study the logarithmic Schr\"odinger equation
\[
-\Delta u+V(x)u=u\log u^2,\qquad x\in\R^N,\ N\ge3.
\]
Since the logarithmic energy is not \(C^1\) on the natural space \(H_V^1(\R^N)\), direct
invariant-set minimax arguments for sign-changing solutions are not available. We introduce an
\(L^p\)-regularization perturbation, which restores a \(C^1\) variational structure while preserving
the logarithmic nonlinearity, and prove via a limiting argument that the original equation admits
infinitely many sign-changing weak solutions.

\smallskip
\noindent\textbf{Keywords}: Logarithmic Schr\"odinger equations; Perturbation methods; Sign-changing solutions.

\smallskip
\noindent\textbf{Mathematics Subject Classification (2020)}: 35J60, 35A15, 35B20, 58E05.
\end{abstract}

\section{Introduction}

We consider the logarithmic Schr\"odinger equation
\begin{equation}\label{eq1.1}
-\Delta u + V(x)u = u\log u^2, \quad x\in\R^N,
\end{equation}
where \(N\geq3\) and the potential \(V\in C(\R^N,\R)\) satisfies
\[
\tag{V}
0<V_0=\inf_{x\in\R^N}V(x)\leq\lim_{|x|\to\infty}V(x)=+\infty .
\]

Equation \eqref{eq1.1} is the stationary counterpart of the logarithmic Schr\"odinger dynamics proposed in nonlinear wave mechanics as a nonlinear modification of quantum evolution \cite{BialynickiBirulaMycielski1976}.
The logarithmic term is compatible with the separability requirement for noninteracting subsystems and is related to entropy-type quantities \cite{BialynickiBirulaMycielski1976,Gross1975}.
A hallmark of the logarithmic nonlinearity is the appearance of Gaussian coherent structures (gaussons) and the persistence of Gaussian profiles under the flow \cite{BialynickiBirulaMycielski1976,BialynickiBirulaMycielski1979}.
Logarithmic response laws also arise as effective models in nonlinear optics, leading to logarithmic NLS-type descriptions and Gaussian-like self-trapped states \cite{KrolikowskiEdmundsonBang2000}.
From the analytical viewpoint, $u\log u^2$ has borderline growth and is closely tied to logarithmic Sobolev inequalities and Orlicz-type variational frameworks \cite{Cazenave2003,Gross1975}.

A formal variational structure for \eqref{eq1.1} is given by the following functional
\[
I(u)
=\frac{1}{2}\int_{\R^N}\bigl[|\nabla u|^2 + (V(x)+1)u^2\bigr]\,dx
-\frac{1}{2}\int_{\R^N}u^2\log u^2\,dx,
\]
naturally associated with the space
\[
H^1_V(\R^N)
=\Bigl\{u\in H^1(\R^N): \int_{\R^N} V(x)u^2\,dx < \infty\Bigr\}.
\]
Two difficulties are intrinsic to the logarithmic term.
First, the map $u\mapsto \int_{\R^N}u^2\log u^2\,dx$ may be ill-defined on $H^1_V(\R^N)$, since there exists $u\in H^1_V(\R^N)$ with
\[
\int_{\R^N}u^2\log u^2\,dx=-\infty,
\]
so that $I(u)$ is not finite everywhere.
Second, even when finite, $I$ fails to be Fr\'echet differentiable on $H^1_V(\R^N)$ because of the singular behavior of $\log u^2$ near $u=0$.
These issues obstruct the direct use of deformation arguments and minimax schemes for sign-changing solutions, which typically require a $C^1$ functional and testing against the positive and negative parts.

There is a large literature on positive solutions, ground states, semiclassical states, and multi-bump solutions for logarithmic Schr\"odinger equations, developed through nonsmooth critical point theory, Luxemburg-type norms, penalization, constrained minimization, and Nehari-type arguments; see for instance
\cite{MR4048330,MR4097474,MR719365,MR3195154,MR3451965,MR3951962,MR3385171,MR3613541,MR3894545,MR4066104}.

For sign-changing solutions, results are more limited.
Under additional symmetry assumptions (for example, radial symmetry of $V$), infinitely many radial sign-changing solutions were obtained in \cite{MR3951962} by minimizing $I$ on suitable sign-changing Nehari-type sets and recovering the Euler--Lagrange equation via directional derivatives.
Beyond symmetric settings, the main obstruction is the lack of a usable $C^1$ variational structure for the original functional. In a significant contribution \cite{MR3894545}, Wang and Zhang obtained a profound connection between the power-law nonlinearity equation and the logarithmic Schr\"{o}dinger equation. Inspired by this work, Zhang and Wang \cite{MR4066104} proposed a global perturbation of the variational functional with power nonlinearity
\[
\begin{aligned}
I_r(u)
&=\frac12\int_{\mathbb R^N}\bigl[|\nabla u|^2+V(x)u^2\bigr]\,dx  \\
&\quad -\int_{\mathbb R^N}
\left(\frac{2}{r(r-2)}|u|^r-\frac{1}{r-2}u^2\right)\,dx
\quad\text{with}\quad r\in(2,2^{\ast}) .
\end{aligned}
\]
As a result, a family of $C^2$ functionals $I_r$ on $H^1(\mathbb{R}^N)$ was considered. Critical points of $I_r$ with uniformly bounded energy converge to critical points of $I$ as $r\to2^{+}$. This perturbation was then used to study concentration of sign-changing solutions for the logarithmic Schr\"{o}dinger equation.

We aim to obtain infinitely many sign-changing solutions of higher topological type for \eqref{eq1.1} under \((V)\), without imposing symmetry. The approach is based on the \(L^p\)-regularization perturbation recently developed in \cite{huang2026classlogarithmicschrodingerequations}, which restores smoothness while preserving the logarithmic structure; our contribution is to combine this perturbation with cone-invariant minimax methods to obtain infinitely many sign-changing solutions.

For \(\lambda\in(0,1]\), consider the perturbed equation
\begin{equation}\label{eq1.2}
\lambda |u|^{p-2}u-\Delta u+V(x)u=u\log u^2,
\quad x\in\R^N,
\end{equation}
where \(p\in\left(\max\{\frac{2N-4}{N+2},1\},2\right)\) is fixed.
Its natural energy space is
\[
X=L^{p}(\R^N)\cap H^1_V(\R^N),
\]
endowed with $\|u\|=|u|_{p}+\|u\|_{H^1_V}$.
The additional $L^{p}$-integrability with $p<2$ provides the missing integrability needed to control the negative part of $u^2\log u^2$, and yields a well-defined $C^1$ functional $I_\lambda$ on $X$.
The resulting $C^1$ functional permits the use of the invariant-set method for descending flows and the genus framework developed in \cite{MR2149532,MR3311905} to produce sign-changing critical points for $I_\lambda$.

For each fixed $\lambda\in(0,1]$ we obtain infinitely many sign-changing critical points $\{\omega_{\lambda,i}\}_{i\ge1}\subset X$, with $I_\lambda(\omega_{\lambda,i})\to+\infty$ as $i\to\infty$.
The construction is affected by a feature absent from the usual power-type nonlinearities: the map
\[
g(t)=t\log t^{2}
\]
is not monotone on $(0,+\infty)$ since
\[
g'(t)=2\log t+2<0 \quad \text{for } 0<t<e^{-1}.
\]
Consequently, the cone-invariance estimates usually used for sign-changing solutions may fail for the logarithmic nonlinearity. This is one of the main obstacles in the present minimax scheme.

The remaining step is the limiting procedure $\lambda\to0^+$.
Using uniform estimates independent of $\lambda$ and a variational perturbation argument in the spirit of \cite{MR3259554,MR2988727,MR2983045}, we prove that, for each fixed $i$ and along a sequence $\lambda_n\to0^+$, $\omega_{\lambda_n,i}$ converges strongly in $H^1_V(\R^N)$ to a sign-changing critical point $\omega_i$ of $I$.
The energy levels satisfy $I(\omega_i)\ge \alpha_i$ for some $\alpha_i>0$ with $\alpha_i\to+\infty$; consequently the limit solutions $\{\omega_i\}_{i\ge1}$ are distinct and sign-changing.

A function \(u\in H_V^1(\R^N)\) is called a weak solution of \eqref{eq1.1} if
\[
\int_{\R^N}u^2|\log u^2|\,dx<\infty
\]
and
\[
\int_{\R^N}\bigl[\nabla u\cdot\nabla\varphi+V(x)u\varphi\bigr]\,dx
=
\int_{\R^N}u\log u^2\,\varphi\,dx
\]
for every \(\varphi\in C_0^\infty(\R^N)\). In this case we write \(I'(u)=0\) in the weak sense.

The main result is the following.

\begin{Thm}\label{Thm1.1}
Assume that \((V)\) holds. Then \eqref{eq1.1} possesses infinitely many sign-changing weak solutions.
\end{Thm}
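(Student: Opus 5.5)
The plan follows the strategy outlined in the introduction: perturb, solve the perturbed problem, then pass to the limit. For $\lambda\in(0,1]$ I work with
\[
I_\lambda(u)=\frac{\lambda}{p}\int_{\R^N}|u|^p\,dx+\frac12\int_{\R^N}\bigl[|\nabla u|^2+(V(x)+1)u^2\bigr]dx-\frac12\int_{\R^N}u^2\log u^2\,dx
\]
on $X=L^p\cap H^1_V$. The first step is to check that $I_\lambda\in C^1(X)$. I split $u^2\log u^2$ into its part on $\{|u|\le\delta\}$ and on $\{|u|>\delta\}$. On the small-amplitude region, $u^2|\log u^2|\le C_\varepsilon |u|^{2-\varepsilon}$, and this is controlled by interpolating between $L^p$ and $L^2$. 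On the large-amplitude region, $u^2\log u^2\le C|u|^{q}$ for some $q\in(2,2^*)$. Condition $(V)$ gives a compact embedding $H^1_V\hookrightarrow L^s$ for $2\le s<2^*$. Combining this with the $L^p$ control, and using $p>\frac{2N-4}{N+2}$ so that the dual exponents work out for the derivative, I expect $I_\lambda$ to satisfy the Palais--Smale condition on $X$.

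Next, I apply the invariant-set descending-flow and genus framework of \cite{MR2149532,MR3311905}. Let $P^\pm=\{u\in X:\pm u\ge0\}$ and $D_\varepsilon^\pm=\{u:\operatorname{dist}(u,P^\pm)<\varepsilon\}$. I build a pseudo-gradient (or the operator $A_\lambda$ solving $\lambda|w|^{p-2}w-\Delta w+(V+1+M)w=u\log u^2+(M+1)u$) whose flow keeps $D_\varepsilon^\pm$ invariant.

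\emph{This is the step I expect to be the main obstacle}, because $t\mapsto t\log t^2$ is not monotone near $0$. I would first try adding a large linear shift $Mu$ to both sides, which makes $t\log t^2+Mt$ increasing on the relevant range. I would then estimate $\operatorname{dist}(A_\lambda u,P^\pm)$ by testing with $(A_\lambda u)^\mp$. The $\lambda|w|^{p-2}w$ term is monotone and only helps, but the shift by $M$ must be absorbed into the norm, so the estimates have to be uniform in $\lambda$.

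With invariance in hand, the symmetric minimax values
\[
c_{\lambda,i}=\inf_{A\in\Gamma_i}\sup_{A\setminus(D^+_\varepsilon\cup D^-_\varepsilon)}I_\lambda
\]
yield sign-changing critical points $\omega_{\lambda,i}$. The genus classes $\Gamma_i$ are built from finite-dimensional subspaces of $C_0^\infty\subset X$, on which $I_\lambda\to-\infty$ because of the superquadratic log term. This gives bounds $\alpha_i\le c_{\lambda,i}\le\beta_i$ that are independent of $\lambda\in(0,1]$, and the lower bound $\alpha_i\to\infty$ comes from a standard intersection lemma with complements of finite-dimensional subspaces.

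Finally, I pass to the limit $\lambda\to0^+$. From $I_\lambda(\omega)-\frac12\langle I_\lambda'(\omega),\omega\rangle=\lambda(\frac1p-\frac12)|\omega|_p^p+\frac12|\omega|_2^2$ I get a uniform $L^2$ bound and the bound $\lambda|\omega|_p^p\le C$. Together with the log-Sobolev inequality this gives uniform $H^1_V$ bounds. Compactness of the embedding then provides $\omega_{\lambda_n,i}\to\omega_i$ in $L^s$ for $2\le s<2^*$, and testing the equation with $\omega_{\lambda_n,i}-\omega_i$ upgrades this to strong $H^1_V$ convergence. For $\varphi\in C_0^\infty$ the perturbation satisfies $\lambda\int|\omega|^{p-1}|\varphi|\le\lambda^{1/p}(\lambda|\omega|_p^p)^{(p-1)/p}|\varphi|_p\to0$, so $\omega_i$ is a weak solution. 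Fatou's lemma on the negative part gives $\int\omega_i^2|\log\omega_i^2|<\infty$. The limit satisfies $I(\omega_i)\ge\alpha_i$, since the $\lambda|\omega|_p^p$ term vanishes in the limit, so infinitely many of the $\omega_i$ are distinct. To show $\omega_i$ changes sign, I test the equation with $\omega_{\lambda,i}^\pm$: on $\{|u|\le\delta\}$ the term $u^2\log u^2$ is negative, so it helps rather than hurts. This leads to a uniform lower bound $\|\omega^\pm_{\lambda,i}\|_{L^2}\ge c>0$, which passes to the limit and shows that $\omega_i^\pm\ne0$.
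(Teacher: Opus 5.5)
\textbf{Main gap: cone invariance.} The linear shift you propose cannot give the invariance of $D^\pm_\varepsilon$. No choice of $M$ makes $t\log t^2+Mt$ increasing near $0$, because its derivative $2\log|t|+2+M\to-\infty$ as $t\to0$. More to the point, the invariance argument needs a \emph{sign} condition, and for $u\ge0$ the right-hand side $u\log u^2+(M+1)u$ is negative on $\{0<u<e^{-(M+1)/2}\}$ for every $M$. When you test the equation for $w=A_\lambda u$ with $w^-$, this leaves a positive term $\int_{\{0<u<e^{-(M+1)/2}\}}u\,|\log u^2+M+1|\,w^-\,dx$ that $u^-$ does not control. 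Concretely, take $u\in X$ with $0\le u\le e^{-(M+1)/2}$ and $u\not\equiv0$. The right-hand side is then $\le0$ and $\not\equiv0$, and testing with $w^+$ gives $w^+=0$ and $w\not\equiv0$. So $A_\lambda$ sends a nonnegative function to a nonzero nonpositive one. Hence $A_\lambda(P^+)\not\subset P^+$, and the estimate $\operatorname{dist}(A_\lambda u,P^\pm)\le\frac12\operatorname{dist}(u,P^\pm)$, on which the invariance of $D^\pm_\varepsilon$ rests, fails. The term $\lambda|w|^{p-2}w$ cannot repair this, since it acts on $w$ and not on $u$. Uniformity in $\lambda$ is also not the issue: invariance is only needed for each fixed $\lambda$, and uniformity enters only through the bounds $\alpha_i\le c_{\lambda,i}\le\beta_i$.

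\textbf{The missing idea.} What the $L^p$ component of $X$ is for is a \emph{sublinear} shift. The paper adds $C_1|w|^{p-2}w$ on the left and $C_1|u|^{p-2}u$ on the right, with $C_1$ chosen so that $C_1t^{p-1}+t\log t^2\ge0$ for $0\le t\le1$. Since $p-1<1$, the power $t^{p-1}$ beats the logarithmic singularity, and $w\mapsto J_\lambda'(w)+C_1|w|^{p-2}w$ is still a strictly monotone coercive map $X\to X^*$. On $\{u>0\}$ the right-hand side now has the good sign, so testing with $-v_-$ bounds $v_-$ by $u_-$ alone. The H\"older--Young bookkeeping then leads the paper to measure the cone neighborhoods by $C_1|u_\mp|_p^p+\frac{p}{2(p-1)}|u_\mp|_2^2+S|u_\mp|_{2^*}^2<\kappa$, rather than by $X$-distance.

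\textbf{A secondary problem in the limit step.} Write $\omega_n=\omega_{\lambda_n,i}$. Testing with $\omega_n-\omega_i$ is not admissible, because $\omega_i$ need not lie in $L^p$. Even with an approximation, the small-amplitude part of $\int\omega_n\log\omega_n^2\,(\omega_n-\omega_i)\,dx$, where $|t\log t^2|$ is only bounded by $C_\epsilon|t|^{1-\epsilon}$, would need $L^p$-type bounds. You do not have these uniformly; only $\lambda_n|\omega_n|_p^p$ is bounded. The paper instead argues one-sidedly. The Nehari identity for $\omega_n$, convergence of $\int(\omega_n^2\log\omega_n^2)_+\,dx$, and Fatou for the negative part give $\limsup\|\omega_n\|_{H_V^1}^2\le\int\omega_i^2\log\omega_i^2\,dx$. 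The argument is then closed by the energy identity $\|\omega_i\|_{H_V^1}^2=\int\omega_i^2\log\omega_i^2\,dx$ for the limit, obtained by testing with $\eta_R^2T_k(\omega_i)$ and $\eta_R^2\omega_i$.
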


The paper is organized as follows.
In Section 2 we introduce the modified functional $I_\lambda$, verify its smoothness and compactness properties, and apply the one-pair case of the invariant-set minimax theorem in \cite{MR3311905} to obtain infinitely many sign-changing weak solutions of \eqref{eq1.2}.
In Section 3 we pass to the limit $\lambda\to0^+$, prove strong convergence of the perturbed solutions, and complete the proof of Theorem~\ref{Thm1.1}.

\section{Infinitely many sign-changing critical points for the perturbed problem}

\subsection{The perturbed functional and compactness}

Let
\[
H^{1}_{V}(\R^N)
=\left\{u\in H^{1}(\R^N): \int_{\R^N}V(x)u^{2}\,dx<\infty \right\},
\]
endowed with the inner product
\[
\langle u,v\rangle_{V}
=\int_{\R^N}\bigl[\nabla u\cdot \nabla v+V(x)uv\bigr]\,dx
\]
and the induced norm \(\|u\|_{H^{1}_{V}}=\langle u,u\rangle_{V}^{1/2}\).

Define
\[
\mathcal H:H_V^1(\R^N)\to (H_V^1(\R^N))^*
\]
by
\[
\langle \mathcal H u,v\rangle
=
\int_{\R^N}\bigl[\nabla u\cdot\nabla v+V(x)uv\bigr]\,dx,
\quad u,v\in H_V^1(\R^N).
\]
For \(m\in(1,2)\), define
\[
\mathcal N_m:L^m(\R^N)\to (L^m(\R^N))^*
\]
by
\[
\langle \mathcal N_m u,v\rangle
=
\int_{\R^N}|u|^{m-2}uv\,dx,
\quad u,v\in L^m(\R^N).
\]

\begin{Lem}\label{Lem2.1}
The operator \(\mathcal H\) is bounded and strongly monotone. More precisely,
\[
\langle \mathcal H u-\mathcal H v,u-v\rangle
=
\|u-v\|_{H_V^1}^{2},
\quad u,v\in H_V^1(\R^N).
\]
Moreover, for every \(m\in(1,2)\), the operator \(\mathcal N_m\) is bounded, hemicontinuous, coercive and monotone. In particular, it is of type \((S)\): if
\[
u_n\rightharpoonup u
\quad\text{in }L^m(\R^N)
\]
and
\[
\langle \mathcal N_m u_n,u_n-u\rangle\to0,
\]
then
\[
u_n\to u
\quad\text{in }L^m(\R^N).
\]
\end{Lem}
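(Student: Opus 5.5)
The plan is to treat \(\mathcal H\) and \(\mathcal N_m\) separately; everything is elementary except the type \((S)\) property.

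\textbf{The operator \(\mathcal H\).} The identity
\[
\langle \mathcal H u-\mathcal H v,u-v\rangle=\|u-v\|_{H_V^1}^2
\]
follows at once from bilinearity, since \(\langle \mathcal H u,v\rangle=\langle u,v\rangle_V\). Boundedness comes from Cauchy--Schwarz: \(|\langle \mathcal H u,v\rangle|\le \|u\|_{H_V^1}\|v\|_{H_V^1}\), so \(\|\mathcal H u\|_*\le\|u\|_{H_V^1}\).

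\textbf{Elementary properties of \(\mathcal N_m\).} Set \(m'=m/(m-1)\). Since \(\bigl||u|^{m-2}u\bigr|^{m'}=|u|^m\), H\"older's inequality gives
\[
|\langle\mathcal N_m u,v\rangle|\le |u|_m^{m-1}|v|_m ,
\]
so \(\mathcal N_m\) is bounded. Coercivity follows from \(\langle \mathcal N_m u,u\rangle/|u|_m=|u|_m^{m-1}\to\infty\).

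For hemicontinuity, take \(t\to t_0\) in \(\langle \mathcal N_m(u+tw),v\rangle\). The integrand converges pointwise and is dominated by \((|u|+|w|)^{m-1}|v|\) (for \(|t|\le1\)), which is integrable by H\"older. Dominated convergence then finishes the argument.

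Monotonicity follows because \(s\mapsto|s|^{m-2}s\) is nondecreasing on \(\R\). Hence \((|a|^{m-2}a-|b|^{m-2}b)(a-b)\ge0\) pointwise, and we integrate.

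\textbf{Type \((S)\).} This is the main step. Suppose \(u_n\rightharpoonup u\) in \(L^m\) and \(\langle \mathcal N_m u_n,u_n-u\rangle\to0\). Weak convergence gives \(\langle \mathcal N_m u,u_n-u\rangle\to0\), so
\[
\langle \mathcal N_m u_n-\mathcal N_m u,u_n-u\rangle\to0 .
\]
Now write
\[
\langle \mathcal N_m u_n-\mathcal N_m u,u_n-u\rangle
=|u_n|_m^m-\langle\mathcal N_m u_n,u\rangle-\langle\mathcal N_m u,u_n\rangle+|u|_m^m .
\]
By H\"older, \(\langle\mathcal N_m u_n,u\rangle\le|u_n|_m^{m-1}|u|_m\) and \(\langle\mathcal N_m u,u_n\rangle\le|u|_m^{m-1}|u_n|_m\). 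Therefore the left side is at least
\[
\bigl(|u_n|_m^{m-1}-|u|_m^{m-1}\bigr)\bigl(|u_n|_m-|u|_m\bigr)\ge0 .
\]
Since the left side tends to zero, \(|u_n|_m\to|u|_m\). The space \(L^m\) is uniformly convex for \(1<m<2\), so the Radon--Riesz property, combined with weak convergence and convergence of norms, yields \(u_n\to u\) in \(L^m\).

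The one delicate point is this norm-convergence inequality; the rest is routine. A pointwise alternative, using the strict monotonicity estimate for \(1<m<2\) together with the Brezis--Lieb lemma, would also work. The Radon--Riesz route is cleaner, and it is the one I would follow.
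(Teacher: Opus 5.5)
Your proof is correct and follows the paper's route. The paper derives the \(\mathcal H\) identity directly from the \(H_V^1\)-inner product and simply cites the standard theory of duality maps on \(L^m\) for the properties of \(\mathcal N_m\); your type \((S)\) argument (H\"older to get \(\bigl(|u_n|_m^{m-1}-|u|_m^{m-1}\bigr)\bigl(|u_n|_m-|u|_m\bigr)\to0\), hence \(|u_n|_m\to|u|_m\), then the Radon--Riesz property of the uniformly convex space \(L^m\)) is exactly the standard argument behind that citation, written out in full.
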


\begin{proof}
The assertion for \(\mathcal H\) follows directly from the definition of the \(H_V^1\)-inner product. The stated properties of \(\mathcal N_m\) are standard for the duality map on \(L^m\); see, for instance, \cite[p.~181]{MR1615056}.
\end{proof}

Throughout the paper we use the continuous extensions
\[
t^2\log t^2=0,
\qquad
 t\log t^2=0
\quad\text{at }t=0.
\]
The formal energy associated with \eqref{eq1.1} is
\begin{equation*}
I(u)
=\frac{1}{2}\int_{\R^N}\bigl[|\nabla u|^{2}+(V(x)+1)u^{2}\bigr]\,dx
-\frac{1}{2}\int_{\R^N}u^{2}\log u^{2}\,dx .
\end{equation*}
By the logarithmic Sobolev inequality \cite{MR3385171}, for every \(a>0\),
\[
\int_{\R^N}u^{2}\log u^{2}\,dx
\le \frac{a^{2}}{\pi}|\nabla u|_{2}^{2}
+\bigl(\log|u|_{2}^{2}-N(1+\log a)\bigr)|u|_{2}^{2}.
\]
Thus \(I(u)>-\infty\) for every \(u\in H_V^1(\R^N)\). However, the logarithmic term may be equal to \(-\infty\) for some \(u\in H_V^1(\R^N)\); hence \(I\) is, in general, only an extended real-valued functional on \(H_V^1(\R^N)\), and it is not of class \(C^1\) there.

To recover a smooth variational structure, fix
\[
p\in\left(\max\left\{\frac{2N-4}{N+2},1\right\},2\right)
\]
and, for \(\lambda\in(0,1]\), consider the \(L^p\)-perturbed equation
\begin{equation}\label{eq2.1}
\lambda |u|^{p-2}u-\Delta u+V(x)u=u\log u^2,
\quad x\in\R^N.
\end{equation}
We work in
\[
X=L^p(\R^N)\cap H_V^1(\R^N),
\qquad
\|u\|=|u|_p+\|u\|_{H_V^1}.
\]
Then \(X\) is a separable reflexive Banach space and \(C_0^\infty(\R^N)\) is dense in \(X\).

The perturbed functional \(I_\lambda:X\to\R\) is defined by
\begin{equation}\label{eq2.2}
I_{\lambda}(u)
=\frac{\lambda}{p}\int_{\R^N}|u|^p\,dx
+\frac{1}{2}\int_{\R^N}\bigl[|\nabla u|^{2}+(V(x)+1)u^{2}\bigr]\,dx
-\frac{1}{2}\int_{\R^N}u^{2}\log u^{2}\,dx .
\end{equation}

\begin{Lem}\label{Lem2.2}
Let \(N\ge3\), \(p\in(\max\{(2N-4)/(N+2),1\},2)\), and \(\lambda\in(0,1]\). Define
\[
\Phi(u)=\frac{1}{2}\int_{\R^N}u^{2}\log u^{2}\,dx,
\quad u\in X.
\]
Then \(\Phi\in C^1(X,\R)\), and
\begin{equation}\label{eq2.3}
\langle \Phi'(u),v\rangle
=
\int_{\R^N}\bigl[u\log u^{2}+u\bigr]v\,dx,
\quad u,v\in X.
\end{equation}
Consequently, \(I_{\lambda}\in C^{1}(X,\R)\), and
\begin{equation}\label{eq2.4}
\langle I_{\lambda}'(u),v\rangle
=\lambda\int_{\R^N}|u|^{p-2}uv\,dx
+\int_{\R^N}\bigl[\nabla u\cdot \nabla v+V(x)u v\bigr]\,dx
-\int_{\R^N}u\log u^{2}\,v\,dx .
\end{equation}
\end{Lem}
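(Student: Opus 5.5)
We split the logarithmic density into a part that is small near \(0\) and a part with subcritical power growth. Fix \(q\in(2,2^*)\) and \(\delta\in(0,e^{-1})\) small, and write
\[
F(t)=\tfrac12 t^2\log t^2=F_1(t)-F_2(t),
\]
with \(F_1,F_2\in C^1(\R)\) even and
\[
F_2(t)=-\tfrac12 t^2\log t^2 \quad\text{for } |t|\le\delta,
\]
extended so that \(F_2\ge0\), \(F_1\ge0\). Then \(f=F'\) satisfies \(f(t)=t\log t^2+t\). On \(|t|\le\delta\) the map \(t\mapsto F_2(t)\) is convex and \(|F_2'(t)|\le C|t|\,|\log|t||\). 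Away from \(0\) we have \(|F_1'(t)|\le C|t|^{q-1}\), and \(F_2'\) is bounded by \(C|t|\) there.

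The key observation is that near the origin \(|t\log t^2|\le C_\varepsilon |t|^{p-1}+C|t|^{1+\varepsilon'}\), or more crudely \(|t|\,|\log t^2|\le C(|t|^{p-1}+|t|^{q-1})\) for all \(t\). This holds because \(p-1<1<q-1\) and \(|\log|t||\) grows slower than any power. Hence
\[
|f(t)|\le C\bigl(|t|^{p-1}+|t|^{q-1}\bigr),\qquad |F(t)|\le C\bigl(|t|^p+|t|^q\bigr).
\]
Since \(X\hookrightarrow L^p\cap L^q\) (using \(H^1_V\subset H^1\hookrightarrow L^q\) by Sobolev), \(\Phi\) is finite on \(X\).

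Next we compute the Gateaux derivative. For \(u,v\in X\) and \(|s|\le1\), the mean value theorem gives
\[
\frac{F(u+sv)-F(u)}{s}=f(u+\theta sv)\,v,
\]
and this is dominated by
\[
C\bigl((|u|+|v|)^{p-1}|v|+(|u|+|v|)^{q-1}|v|\bigr)\in L^1 .
\]
The integrability follows from Hölder in \(L^p\) and in \(L^q\). Dominated convergence then yields \eqref{eq2.3}.

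For continuity of \(\Phi'\), take \(u_n\to u\) in \(X\). Then \(u_n\to u\) in \(L^p\cap L^q\), and along a subsequence \(u_n\to u\) a.e. with dominating functions \(g_p\in L^p\), \(g_q\in L^q\). Split
\[
f(u)=f_p(u)+f_q(u),\qquad f_p(u)=f(u)\chi_{\{|u|\le1\}},
\]
or better use a smooth cutoff so that \(|f_p(t)|\le C|t|^{p-1}\) and \(|f_q(t)|\le C|t|^{q-1}\). The Nemytskii operators \(L^p\to L^{p/(p-1)}\) and \(L^q\to L^{q/(q-1)}\) are then continuous by the standard Krasnosel'skii argument (dominated convergence). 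This gives
\[
\sup_{\|v\|\le1}\bigl|\langle\Phi'(u_n)-\Phi'(u),v\rangle\bigr|\le |f_p(u_n)-f_p(u)|_{p'}|v|_p+|f_q(u_n)-f_q(u)|_{q'}|v|_q\to0 .
\]
The subsequence argument upgrades this to the whole sequence. A continuous Gateaux derivative implies \(\Phi\in C^1(X,\R)\).

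Finally, \(u\mapsto \frac\lambda p|u|_p^p\) is \(C^1\) on \(L^p\) with derivative \(\lambda\mathcal N_p u\), and the quadratic part is \(C^1\) on \(H^1_V\), where \(\int u^2\le V_0^{-1}\|u\|_{H^1_V}^2\). Subtracting then gives \eqref{eq2.4}; the extra \(+u\) from \(\Phi'\) cancels the \(+1\) in \((V+1)\).

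The main obstacle is the pointwise bound \(|t\log t^2|\le C(|t|^{p-1}+|t|^{q-1})\), precisely at \(t\to0\), where it needs \(p<2\). This is exactly where the \(L^p\) term is essential, since \(|t|\,|\log|t||/|t|^{p-1}=|t|^{2-p}|\log|t||\to0\). The lower bound \(p>\frac{2N-4}{N+2}\) is not needed for this lemma; it is presumably used later for compactness.
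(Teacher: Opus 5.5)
Your argument is correct and is essentially the paper's proof. It uses the same pointwise bounds on $t\log t^2+t$, the mean value theorem with dominated convergence for the G\^ateaux derivative, and a smooth cutoff that splits $f$ into a $|t|^{p-1}$-part and a higher-power part whose Nemytskii operators are continuous. The only differences are cosmetic: you use an exponent $q\in(2,2^*)$ instead of $2^*$, and a subsequence/dominated-convergence argument in place of Vitali's theorem.

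The preliminary decomposition $F=F_1-F_2$ is never used and can be deleted. As stated, its convexity claim would in any case need $\delta<e^{-3/2}$, since $F_2''(t)=-2\log|t|-3$.
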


\begin{proof}
Since \(X\hookrightarrow L^p(\R^N)\cap L^{2^*}(\R^N)\), interpolation gives
\[
X\hookrightarrow L^q(\R^N)
\quad\text{for every }q\in[p,2^*].
\]
We use the elementary estimate
\begin{equation}\label{eq2.5}
t^{2}|\log t^{2}|
\le C\bigl(|t|^{p}+|t|^{2^{*}}\bigr),
\qquad
|t\log t^{2}|+|t|
\le C\bigl(|t|^{p-1}+|t|^{2^{*}-1}\bigr),
\end{equation}
valid for all \(t\in\R\). It follows that \(\Phi\) is finite in \(X\).

Let \(g(t)=\frac12t^2\log t^2\), with \(g(0)=0\). Then \(g\in C^1(\R)\) and
\(g'(t)=t\log t^2+t\). For \(u,v\in X\), the mean value theorem and \eqref{eq2.5} give, for \(|s|\le1\),
\[
\left|
\frac{g(u+sv)-g(u)}{s}
\right|
\le
C\bigl(|u|^{p-1}+|v|^{p-1}+|u|^{2^*-1}+|v|^{2^*-1}\bigr)|v|.
\]
The right-hand side belongs to \(L^1(\R^N)\). Hence dominated convergence yields
\[
\lim_{s\to0}\frac{\Phi(u+sv)-\Phi(u)}{s}
=
\int_{\R^N}(u\log u^2+u)v\,dx .
\]

It remains to prove the continuity of \(\Phi'\). Let \(u_n\to u\) in \(X\), and set
\(f(t)=t\log t^2+t\). Choose \(\chi\in C^1(\R,[0,1])\) such that \(\chi(t)=1\) for \(|t|\le1\) and \(\chi(t)=0\) for \(|t|\ge2\). Put
\[
f_1(t)=\chi(t)f(t),
\qquad
f_2(t)=(1-\chi(t))f(t).
\]
Then \(f_1,f_2\) are continuous and, by \eqref{eq2.5},
\[
|f_1(t)|\le C|t|^{p-1},
\qquad
|f_2(t)|\le C|t|^{2^*-1}.
\]
Since \(u_n\to u\) in \(L^p(\R^N)\cap L^{2^*}(\R^N)\), Vitali's theorem gives
\[
f_1(u_n)\to f_1(u)
\quad\text{in }L^{p/(p-1)}(\R^N),
\]
and
\[
f_2(u_n)\to f_2(u)
\quad\text{in }L^{2^*/(2^*-1)}(\R^N).
\]
Therefore, for \(v\in X\),
\[
\left|\int_{\R^N}(f(u_n)-f(u))v\,dx\right|
\le
|f_1(u_n)-f_1(u)|_{p/(p-1)}|v|_p
+
|f_2(u_n)-f_2(u)|_{2^*/(2^*-1)}|v|_{2^*},
\]
which tends to \(0\) uniformly for \(\|v\|\le1\). Thus \(\Phi'\) is continuous from \(X\) to \(X^*\). The formula for \(I_\lambda'\) follows from \eqref{eq2.2} and \eqref{eq2.3}.
\end{proof}

Consequently, critical points of \(I_\lambda\) are precisely the variational weak solutions of
\eqref{eq2.1}, namely functions \(u\in X\) satisfying
\[
\lambda\int_{\R^N}|u|^{p-2}u\varphi\,dx
+\int_{\R^N}\bigl[\nabla u\cdot\nabla\varphi+V(x)u\varphi\bigr]\,dx
=
\int_{\R^N}u\log u^2\,\varphi\,dx
\]
for every \(\varphi\in X\).

We shall also use the following generalized dominated convergence theorem; see \cite[p.~77]{MR1013117}.

\begin{Lem}\label{Lem2.3}
Let \(\Omega\subset\R^N\) be measurable. Let \(f_n,g_n:\Omega\to\R\) be measurable functions such that \(f_n\to f\) a.e. in \(\Omega\), \(g_n\to g\) a.e. in \(\Omega\), \(|f_n|\le g_n\) a.e. in \(\Omega\). Assume that
\[
\int_{\Omega}g_n\,dx\to\int_{\Omega}g\,dx<\infty .
\]
Then \(f\in L^1(\Omega)\) and
\[
\int_{\Omega}f_n\,dx\to\int_{\Omega}f\,dx .
\]
\end{Lem}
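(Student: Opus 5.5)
The statement is the generalized dominated convergence theorem, and I will deduce it from Fatou's lemma applied twice. Since \(|f_n|\le g_n\) a.e., we have \(g_n\ge0\). Passing to the a.e. limit gives \(|f|\le g\) a.e., and \(g\in L^1(\Omega)\) because \(\int_\Omega g\,dx<\infty\) and \(g\ge0\). Hence \(f\in L^1(\Omega)\). This settles the integrability claim. Note also that \(\int_\Omega g_n\,dx\) is finite for all large \(n\), since these integrals converge to a finite limit; discarding finitely many indices, I may assume every \(g_n\in L^1(\Omega)\), and then every \(f_n\in L^1(\Omega)\).

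Next I consider the two sequences of nonnegative measurable functions \(g_n+f_n\) and \(g_n-f_n\). They converge a.e. to \(g+f\) and \(g-f\), respectively. Fatou's lemma applied to the first sequence gives
\[
\int_\Omega (g+f)\,dx\le \liminf_{n\to\infty}\int_\Omega (g_n+f_n)\,dx=\int_\Omega g\,dx+\liminf_{n\to\infty}\int_\Omega f_n\,dx .
\]
Applied to the second sequence it gives
\[
\int_\Omega (g-f)\,dx\le \int_\Omega g\,dx-\limsup_{n\to\infty}\int_\Omega f_n\,dx .
\]
In both inequalities I split the right-hand side using \(\int_\Omega g_n\,dx\to\int_\Omega g\,dx\).

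Since \(\int_\Omega g\,dx\) is finite, I can subtract it from both sides of each inequality. This yields
\[
\limsup_{n\to\infty}\int_\Omega f_n\,dx\le\int_\Omega f\,dx\le\liminf_{n\to\infty}\int_\Omega f_n\,dx,
\]
which is the claimed convergence \(\int_\Omega f_n\,dx\to\int_\Omega f\,dx\).

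The only delicate point is the bookkeeping with \(\liminf\) of a sum. One must justify \(\liminf(a_n+b_n)=\lim a_n+\liminf b_n\) when \(a_n\) converges to a finite limit, and one needs the finiteness of \(\int_\Omega g\,dx\) so that subtracting it is legitimate (no \(\infty-\infty\)). Both are ensured by the hypothesis and by discarding the finitely many indices above. Otherwise the argument is the standard one; see \cite[p.~77]{MR1013117}.
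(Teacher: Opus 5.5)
Your proof is correct. Applying Fatou's lemma to the nonnegative sequences $g_n+f_n$ and $g_n-f_n$, discarding the finitely many non-integrable indices, and using the finiteness of $\int_\Omega g\,dx$ to cancel it is exactly the standard proof of the generalized dominated convergence theorem. The paper gives no proof of its own and only cites a textbook reference for this standard argument.
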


We now verify the compactness property of \(I_\lambda\).

\begin{Def}\label{Def2.4}
Let \(J\in C^1(X,\R)\). A sequence \(\{u_n\}\subset X\) is called a Palais--Smale sequence for \(J\) if \(\{J(u_n)\}\) is bounded and \(J'(u_n)\to0\) in \(X^*\). We say that \(J\) satisfies the Palais--Smale condition if every Palais--Smale sequence admits a convergent subsequence in \(X\).
\end{Def}

\begin{Lem}\label{Lem2.5}
Assume that \((V)\) holds. For every fixed \(\lambda\in(0,1]\), the functional \(I_\lambda\) satisfies the Palais--Smale condition in \(X\).
\end{Lem}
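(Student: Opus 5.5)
We follow the standard route: first show that every Palais--Smale sequence is bounded in \(X\), then extract a weakly convergent subsequence and upgrade to strong convergence. For the upgrade we use the compact embedding coming from \((V)\) together with the monotonicity and type \((S)\) property of \(\mathcal N_p\) in Lemma~\ref{Lem2.1}.

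\emph{Boundedness.} Let \(\{u_n\}\) be a Palais--Smale sequence with \(|I_\lambda(u_n)|\le C\) and \(I_\lambda'(u_n)\to0\). By \eqref{eq2.2} and \eqref{eq2.4},
\[
I_\lambda(u_n)-\tfrac12\langle I_\lambda'(u_n),u_n\rangle
=\lambda\Bigl(\tfrac1p-\tfrac12\Bigr)|u_n|_p^p+\tfrac12|u_n|_2^2 .
\]
The left side is at most \(C+o(1)\|u_n\|\). Since \(p<2\), this gives
\[
|u_n|_p^p+|u_n|_2^2\le C\bigl(1+\|u_n\|\bigr).
\]
To control \(\|u_n\|_{H^1_V}\), we use \(\langle I_\lambda'(u_n),u_n\rangle=o(\|u_n\|)\) and bound the positive part of the logarithmic term:
\[
\int_{\R^N} u_n^2\log^+ u_n^2\,dx\le C_\varepsilon |u_n|_2^2+\varepsilon...
\]
More precisely, we interpolate between \(L^2\) and \(L^{2^*}\) via \(t^2\log^+t^2\le C_\delta|t|^{2+\delta}\) with \(\delta\) small, which yields
\[
\int_{\R^N} u_n^2\log^+ u_n^2\,dx\le C|u_n|_2^{(2+\delta)(1-\theta)}\,\|u_n\|_{H^1_V}^{(2+\delta)\theta}.
\]
Here \((2+\delta)\theta\) is close to \(0\) as \(\delta\to0\). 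The negative part of \(u^2\log u^2\) enters \(\langle I'_\lambda(u_n),u_n\rangle\) with a favourable sign, so
\[
\|u_n\|_{H^1_V}^2\le \int_{\R^N} u_n^2\log^+u_n^2\,dx+o(\|u_n\|).
\]
Combining this with the bound \(|u_n|_2^2\le C(1+\|u_n\|)\), all powers of \(\|u_n\|\) on the right are strictly less than \(2\) (they are close to \(1/2+\) small). Hence \(\|u_n\|_{H^1_V}\) is bounded, and then so is \(|u_n|_p\).

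\emph{Compactness.} By reflexivity, up to a subsequence \(u_n\rightharpoonup u\) in \(X\). Under \((V)\) the embedding \(H^1_V\hookrightarrow L^q(\R^N)\) is compact for \(q\in[2,2^*)\). Interpolating with the bounded \(L^p\) norms also gives \(u_n\to u\) in \(L^q\) for every \(q\in(p,2^*)\), and a.e.

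We then write
\[
o(1)=\langle I'_\lambda(u_n)-I'_\lambda(u),u_n-u\rangle
=\lambda\langle\mathcal N_pu_n-\mathcal N_pu,u_n-u\rangle+\|u_n-u\|_{H^1_V}^2-\int_{\R^N}(u_n\log u_n^2-u\log u^2)(u_n-u)\,dx .
\]
The left side is \(o(1)\) because \(I'_\lambda(u_n)\to0\), \(u_n-u\) is bounded, and \(u_n-u\rightharpoonup0\). The first term on the right is nonnegative by monotonicity.

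\emph{Main obstacle: the logarithmic term near \(t=0\).} The growth \(|t|^{p-1}\) at \(0\) is only controlled in \(L^p\), where we do not have compactness. We split \(t\log t^2=f_1+f_2\) as in Lemma~\ref{Lem2.2}. The large part \(f_2\) has subcritical-type growth, bounded by \(C_\delta(|t|+|t|^{2^*-1-\delta})\)-type terms up to a log, so its contribution tends to \(0\) by strong \(L^q\) convergence with \(q<2^*\).

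For the small part, take \(f_1(t)=\chi(t)t\log t^2\) with \(\chi\) cut off to \(|t|\le 1\). This function is nonpositive times \(t\), and \(t\mapsto -t\log t^2\) is increasing on \((-e^{-1},e^{-1})\). Using this monotonicity we expect
\[
-\int_{\R^N}\bigl(f_1(u_n)-f_1(u)\bigr)(u_n-u)\,dx\ge -o(1).
\]
This would follow by splitting the domain into \(\{|u_n|,|u|\le e^{-1}\}\), where the integrand has the good sign, and its complement. On the complement \(|f_1'|\) is bounded, so the contribution is controlled by \(\int|u_n-u|^2\to0\).

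If this sign argument is delicate, the fallback is the generalized dominated convergence Lemma~\ref{Lem2.3}. We dominate \(|f_1(u_n)(u_n-u)|\) by \(C(|u_n|^p+|u|^p)\), use \(|u_n-u|\to0\) in \(L^2\) on the set where \(|u_n|\) is not small, and treat tails via uniform \(L^p\) bounds plus decay coming from \(V\to\infty\).

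Either way we obtain
\[
\|u_n-u\|_{H^1_V}^2+\lambda\langle\mathcal N_pu_n-\mathcal N_pu,u_n-u\rangle\to0 .
\]
This gives \(u_n\to u\) in \(H^1_V\), and \(\langle\mathcal N_pu_n,u_n-u\rangle\to0\). The \((S)\) property in Lemma~\ref{Lem2.1} then gives \(u_n\to u\) in \(L^p\), hence \(u_n\to u\) in \(X\).
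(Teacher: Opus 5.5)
Your argument is correct, but both steps take a different route from the paper.

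\textbf{Boundedness.} The paper combines $2I_\lambda(u_n)-\langle I_\lambda'(u_n),u_n\rangle$ with the logarithmic Sobolev inequality and the upper bound $I_\lambda(u_n)\le C$. You instead use $\langle I_\lambda'(u_n),u_n\rangle=o(\|u_n\|)$, discard the negative part of $u_n^2\log u_n^2$, and interpolate $L^{2+\delta}$ between $L^2$ and $L^{2^*}$. This avoids the log-Sobolev inequality altogether.

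\textbf{The logarithmic term.} The paper needs no splitting. Near $0$ one has $|t\log t^2|\le C_\varepsilon|t|^{1-\varepsilon}$, which is much better than the $|t|^{p-1}$ growth you identified as the obstacle. So a single H\"older estimate with exponents $\frac{p+2}{p}$ and $\frac{p+2}{2}$, together with $u_n\to u$ in $L^{(p+2)/2}$, disposes of the whole term. This is exactly where the restriction $p>\frac{2N-4}{N+2}$ is used. Your one-sided argument works differently: since $-t\log t^2$ is increasing on $(-e^{-1},e^{-1})$, the small-amplitude part of the nonlinearity enters with a favourable sign, and the remainder is handled by $L^2$ and $L^{2+\varepsilon}$ convergence. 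It is longer, but it exploits the monotone structure of the equation near $0$ and works for every $p\in(1,2)$.

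Three details should be corrected.

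\begin{enumerate}
\item The exponent of $\|u_n\|$ in your boundedness estimate is $1+\frac{(N+2)\delta}{4}$, close to $1$ rather than $1/2$. It is still $<2$, and together with $|u_n|_p^p\le C(1+\|u_n\|)$ and $p>1$, the coupled bounds on $|u_n|_p$ and $\|u_n\|_{H^1_V}$ still close.
\item $f_1'$ is not bounded on the complement of $\{|u_n|,|u|\le e^{-1}\}$, because one of $u_n(x),u(x)$ may be close to $0$, where $f_1'$ blows up. What holds, and suffices, is $|f_1(a)-f_1(b)|\le C|a-b|$ whenever $\max\{|a|,|b|\}>e^{-1}$. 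If $|a-b|>e^{-1}/2$, use $\sup|f_1|$. Otherwise $a,b$ have the same sign and $|a|,|b|>e^{-1}/2$, a region where $f_1'$ is bounded.
\item Your fallback via Lemma~\ref{Lem2.3} is unnecessary. As stated it would require $|u_n|_p\to|u|_p$, which is not known at that point.
\end{enumerate}
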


\begin{proof}
Let \(\{u_n\}\subset X\) be a Palais--Smale sequence for \(I_\lambda\). Then, for some \(C>0\),
\[
|I_\lambda(u_n)|\le C,
\qquad
\|I_\lambda'(u_n)\|_{X^*}=o_n(1).
\]
From \eqref{eq2.4} and \eqref{eq2.2},
\begin{equation*}
2I_{\lambda}(u_n)-\langle I_{\lambda}'(u_n),u_n\rangle
=
\frac{(2-p)\lambda}{p}\int_{\R^N}|u_n|^p\,dx
+
|u_n|_2^2.
\end{equation*}
Hence
\begin{equation}\label{eq2.6}
\lambda |u_n|_p^p+|u_n|_2^2
\le C+o_n(1)\|u_n\|.
\end{equation}
By the logarithmic Sobolev inequality, choosing \(a>0\) so that
\(a^2/\pi\le1/2\), we have
\begin{equation}\label{eq2.7}
\int_{\R^N}u_n^2\log u_n^2\,dx
\le
\frac12|\nabla u_n|_2^2
+
C\bigl(1+|u_n|_2^2\log^+ |u_n|_2^2\bigr),
\end{equation}
where \(\log^+ r=\max\{\log r,0\}\).

Using \eqref{eq2.6}, we have
\[
|u_n|_2^2\le C+o_n(1)\|u_n\|
\le C(1+\|u_n\|)
\]
for \(n\) large. Hence, by
\[
\log^+ r\le C_\delta(1+r^\delta),\qquad r\ge0,
\]
with \(\delta\in(0,p-1)\), applied to \(r=|u_n|_2^2\), we obtain
\[
|u_n|_2^2\log^+ |u_n|_2^2
\le
C_\delta(1+\|u_n\|^{1+\delta}).
\]
Combining this with \eqref{eq2.7}, we get
\begin{equation}\label{eq2.8}
\int_{\R^N}u_n^2\log u_n^2\,dx
\le
\frac12|\nabla u_n|_2^2
+
C_\delta(1+\|u_n\|^{1+\delta}).
\end{equation}

Using \eqref{eq2.2} and \eqref{eq2.8}, for \(n\) large we get
\[
C+1
\ge
\frac{\lambda}{p}|u_n|_p^p
+
\frac14\|u_n\|_{H_V^1}^2
-
C_\delta(1+\|u_n\|^{1+\delta}).
\]
Since \(1+\delta<p<2\), the right-hand side is coercive in
\[
\|u_n\|=|u_n|_p+\|u_n\|_{H_V^1}.
\]
Therefore \(\{u_n\}\) is bounded in \(X\).

Up to a subsequence,
\[
u_n\rightharpoonup u
\quad\text{in }X.
\]
By the compact embedding induced by \((V)\), see \cite{MR1349229},
\[
u_n\to u
\quad\text{in }L^q(\R^N),
\qquad 2\le q<2^*.
\]
Since \(\{u_n\}\) is also bounded in \(L^p(\R^N)\), interpolation gives
\[
u_n\to u
\quad\text{in }L^{(p+2)/2}(\R^N).
\]
Because \(I_\lambda'(u_n)\to0\) in \(X^*\),
\begin{equation}\label{eq2.9}
\begin{aligned}
&\lambda\int_{\R^N}|u_n|^{p-2}u_n(u_n-u)\,dx  \\
&\quad+
\int_{\R^N}\bigl[\nabla u_n\cdot\nabla(u_n-u)
+V(x)u_n(u_n-u)\bigr]\,dx \\
&=\int_{\R^N}u_n\log u_n^2\,(u_n-u)\,dx+o_n(1).
\end{aligned}
\end{equation}
The restriction on \(p\) allows us to choose \(\varepsilon>0\) such that
\[
(1+\varepsilon)\frac{p+2}{p}<2^*,
\qquad
(1-\varepsilon)\frac{p+2}{p}>p.
\]
Using
\[
|t\log t^2|\le C_\varepsilon\bigl(|t|^{1-\varepsilon}+|t|^{1+\varepsilon}\bigr)
\]
and Hölder's inequality, we obtain
\[
\left|
\int_{\R^N}u_n\log u_n^2\,(u_n-u)\,dx
\right|
\le
C_\varepsilon
\left||u_n|^{1-\varepsilon}+|u_n|^{1+\varepsilon}\right|_{\frac{p+2}{p}}
|u_n-u|_{\frac{p+2}{2}}
=o_n(1).
\]
Thus \eqref{eq2.9} yields
\[
\lambda\langle\mathcal N_pu_n,u_n-u\rangle+
\langle\mathcal H u_n,u_n-u\rangle=o_n(1).
\]
Since
\[
\langle\mathcal H u_n,u_n-u\rangle=\|u_n-u\|_{H_V^1}^2+o_n(1),
\]
and, by monotonicity of \(\mathcal N_p\),
\[
\langle\mathcal N_pu_n,u_n-u\rangle\ge \langle\mathcal N_pu,u_n-u\rangle=o_n(1),
\]
we infer
\[
\langle\mathcal N_pu_n,u_n-u\rangle\to0,
\qquad
\|u_n-u\|_{H_V^1}\to0.
\]
The type \((S)\) property of \(\mathcal N_p\) then gives \(u_n\to u\) in \(L^p(\R^N)\). Hence \(u_n\to u\) in \(X\), and the Palais--Smale condition follows.
\end{proof}

\subsection{Invariant sets and the descending map}
In this subsection, we will introduce the invariant set minimax theorem in \cite[Theorem~2.5]{MR3311905}, and use it to prove some auxiliary results.
Since only one pair of invariant sets is needed
in the present scalar setting, we record the corresponding one-pair formulation.
For the convenience of the reader, we first fix the terminology used below.

An isometric involution on a Banach space \(Y\) means a map \(G:Y\to Y\) such that
\[
G^2={\rm id}_Y,
\qquad
\|G u-G v\|_Y=\|u-v\|_Y
\quad\text{for all }u,v\in Y.
\]
A functional \(f:Y\to\R\) is called \(G\)-invariant if
\[
f(G u)=f(u)\quad\text{for all }u\in Y.
\]
In our application \(G=-{\rm id}_X\); thus symmetric sets are simply sets
satisfying \(A=-A\), and \(\gamma(A)\) denotes the usual Krasnosel'skii genus,
with the convention \(\gamma(\emptyset)=0\).

\begin{Def}\label{Def2.6}
Let \(Y\) be a Banach space, let \(G:Y\to Y\) be an isometric involution, and let
\(f\in C^1(Y,\R)\) be \(G\)-invariant. For an open set \(P\subset Y\), put
\[
Q=G(P),
\quad
M=P\cap Q,
\quad
\Delta=\partial P\cap\partial Q,
\quad
W=P\cup Q.
\]
We say that \(\{P\}\) is a \(G\)-admissible family of invariant sets for \(f\)
at level \(c\) if there exist a symmetric closed neighborhood \(N\) of
\(K_c\setminus W\) with \(\gamma(N)<\infty\) and \(\varepsilon_0>0\) such that,
for every \(0<\varepsilon<\varepsilon_0\), there is a continuous map
\(\eta:Y\to Y\) satisfying
\[
\eta(\overline P)\subset\overline P,
\qquad
\eta(\overline Q)\subset\overline Q,
\qquad
\eta\circ G=G\circ\eta,
\]
\[
\eta|_{f^{c-2\varepsilon}}={\rm id},
\qquad
\eta\bigl(f^{c+\varepsilon}\setminus(N\cup W)\bigr)\subset f^{c-\varepsilon},
\]
where \(f^a=\{u\in Y:f(u)\le a\}\) and
\[
K_c=\{u\in Y:f(u)=c,\ f'(u)=0\}.
\]
\end{Def}

\begin{Thm}[{\cite[Theorem~2.5]{MR3311905}}]\label{Thm2.7}
Let \(Y\) be a Banach space with an isometric involution \(G\), and let \(f\in C^1(Y,\R)\) be a \(G\)-invariant functional. Let \(P\subset Y\) be open, and set
\[
Q=G(P),\quad M=P\cap Q,
\quad \Delta=\partial P\cap\partial Q,
\quad W=P\cup Q.
\]
Assume that \(\{P\}\) is \(G\)-admissible for \(f\) at every level \(c\ge c^*:=\inf_{u\in\Delta}f(u)\). Suppose that, for each \(n\ge2\), there exists a continuous map \(\varphi^{(n)}:B^n\to Y\), where \(B^n\) is the closed unit ball of \(\R^n\), such that
\[
\varphi^{(n)}(0)\in M,
\qquad
\varphi^{(n)}(\partial B^n)\cap M=\emptyset,
\]
\[
\max\left\{\sup_{u\in F_G}f(u),\sup_{u\in\varphi^{(n)}(\partial B^n)}f(u)\right\}<c^*,
\qquad
\varphi^{(n)}(-t)=G\varphi^{(n)}(t),
\]
where \(F_G=\{u\in Y:Gu=u\}\). Define, for \(j\ge2\),
\[
c_j=\inf_{B\in\mathcal F_j}\sup_{u\in B\setminus W}f(u),
\]
where
\[
\mathcal F_j=
\left\{
B=\varphi(B^n\setminus\mathcal Y):
\varphi\in\mathcal G_n,
\ n\ge j,~
\mathcal Y=-\mathcal Y\subset B^n\ \text{open},\
\gamma(\mathcal Y)\le n-j
\right\}
\]
and
\[
\mathcal G_n=
\left\{
\varphi\in C(B^n,Y):
\varphi(-t)=G\varphi(t),
\ \varphi(0)\in M,
\ \varphi|_{\partial B^n}=\varphi^{(n)}
\right\}.
\]
Then \(c_j\), \(j\ge2\), are critical values of \(f\) with
\[
K_{c_j}\setminus W\ne\emptyset,
\qquad
c_j\to+\infty\quad\text{as }j\to\infty.
\]
\end{Thm}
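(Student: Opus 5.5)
The plan is to follow the classical genus-minimax scheme of Ljusternik--Schnirelman type. It has to be adapted so that the sup is taken only over \(B\setminus W\) and so that the deformation keeps \(\overline P\), \(\overline Q\) invariant. The three steps are: (i) an intersection lemma giving \(c_j\ge c^*\) (so every \(c_j\) is finite and the admissibility hypothesis applies at \(c_j\)); (ii) a deformation argument giving \(K_{c_j}\setminus W\ne\emptyset\); (iii) a genus-counting argument giving \(c_j\to+\infty\).

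\emph{Step 1 (intersection).} Fix \(\varphi\in\mathcal G_n\). Let \(D\) be the connected component of the open set \(\varphi^{-1}(M)\) containing \(0\), and let \(\widetilde D\) be its union with \(-D\). The set \(\widetilde D\) is open, symmetric and bounded, contains \(0\), and \(\overline{\widetilde D}\cap\partial B^n=\emptyset\) because \(\varphi(\partial B^n)\cap M=\emptyset\). By Borsuk's theorem, \(\gamma(\partial\widetilde D)=n\).

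On \(\partial\widetilde D\) consider
\[
h(t)=\operatorname{dist}(\varphi(t),P)-\operatorname{dist}(\varphi(t),Q).
\]
Since \(G\) is an isometry swapping \(P\) and \(Q\) and \(\varphi(-t)=G\varphi(t)\), the function \(h\) is odd. Hence its zero set \(Z\subset\partial\widetilde D\) satisfies \(\gamma(Z)\ge n-1\). For \(t\in Z\) we have \(\varphi(t)\in\overline M\setminus M\) and \(\varphi(t)\) is equidistant from \(P\) and \(Q\). If \(\varphi(t)\) lay in \(P\), it would be at distance \(0\) from \(Q\), so \(\varphi(t)\in\overline Q\). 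One must then exclude \(\varphi(t)\in P\cap\partial Q\); I would handle this by perturbing \(h\), or by working with a slightly shrunken component. The outcome should be \(\varphi(Z)\subset\Delta\cup(Y\setminus(\overline P\cup\overline Q))\).

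I expect this bookkeeping to be the main obstacle, namely showing that the image of \(Z\) actually meets \(\Delta\) where \(f\ge c^*\). The first thing I would try is to replace \(Z\) by \(\varphi^{-1}(\Delta)\cap\partial\widetilde D\) directly: since \(\partial P\) and \(\partial Q\) are swapped by \(G\), the same oddness argument applied to \(t\mapsto\operatorname{dist}(\varphi(t),\overline P^{c})-\operatorname{dist}(\varphi(t),\overline Q^{c})\) restricted to \(\partial\widetilde D\) should force \(\gamma(\varphi^{-1}(\Delta))\ge n-1\). Granting this, if \(\gamma(\mathcal Y)\le n-j\) then
\[
\gamma\bigl(\varphi^{-1}(\Delta)\setminus\mathcal Y\bigr)\ge j-1\ge1.
\]
So every \(B\in\mathcal F_j\) meets \(\Delta\subset Y\setminus W\), and therefore \(c_j\ge c^*\). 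Monotonicity \(c_j\le c_{j+1}\) is immediate, since \(\mathcal F_{j+1}\subset\mathcal F_j\).

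\emph{Step 2 (criticality).} Let \(c=c_j\), and use admissibility at \(c\) to get \(N\) and \(\varepsilon_0\). Suppose \(K_c\setminus W=\emptyset\); then one may take \(N=\emptyset\). Choose \(\varepsilon<\min\{\varepsilon_0,(c^*-b)/2\}\), where \(b\) is the max of \(f\) on \(F_G\cup\varphi^{(n)}(\partial B^n)\) from the hypothesis, and let \(\eta\) be the corresponding deformation. Pick \(B=\varphi(B^n\setminus\mathcal Y)\in\mathcal F_j\) with \(\sup_{B\setminus W}f\le c+\varepsilon\). Then \(\eta\circ\varphi\in\mathcal G_n\), because:
\begin{itemize}
\item it is equivariant, since \(\eta\) commutes with \(G\);
\item it equals \(\varphi^{(n)}\) on \(\partial B^n\), since \(\eta=\mathrm{id}\) on \(f^{c-2\varepsilon}\supset\varphi^{(n)}(\partial B^n)\);
\item it sends \(0\) into \(M\), since \(\eta(\overline P)\subset\overline P\), \(\eta(\overline Q)\subset\overline Q\), and \(f(\varphi(0))\) is either below \(c-2\varepsilon\) or is handled through the invariance of \(M\) (I would check \(\eta(M)\subset M\) from the construction).
\end{itemize}
Using \(\eta(\overline P)\subset\overline P\) and \(\eta(\overline Q)\subset\overline Q\), I want to show that points of \(\eta(B)\setminus W\) come from \(B\setminus W\) up to the boundary set \(\Delta\). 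Then \(\sup_{\eta(B)\setminus W}f\le c-\varepsilon\), contradicting the definition of \(c_j\).

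\emph{Step 3 (divergence).} If \(c_j\to\bar c<\infty\), use admissibility at \(\bar c\) to get a symmetric \(N\) with \(k:=\gamma(N)<\infty\). Take \(B\in\mathcal F_{j+k}\) with \(j\) large and \(\sup_{B\setminus W}f\le\bar c+\varepsilon\), and enlarge \(\mathcal Y\) by the symmetric open set \(\varphi^{-1}(\operatorname{int}N_\delta)\), whose genus is at most \(k\). This produces an element of \(\mathcal F_j\) whose part outside \(W\) avoids \(N\). Deforming it by \(\eta\) as in Step 2 gives \(c_j\le\bar c-\varepsilon\), which contradicts \(c_j\nearrow\bar c\). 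Hence \(c_j\to+\infty\).
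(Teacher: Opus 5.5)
\textbf{Main gap: the deformation estimate in Step~2.} The paper gives no proof of this theorem; it quotes it from the literature, so your sketch has to stand on its own. Its architecture is the right one, but the decisive inequality of Step~2 is not established. It also does not follow from the properties of $\eta$ you are using.

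You need $f\le c-\varepsilon$ on $\eta(B)\setminus W$. For $u\in B\setminus W$ this is the deformation property. For $u\in B\cap P$, however, Definition~\ref{Def2.6} only gives $\eta(u)\in\overline P$. Nothing prevents $\eta(u)\in\partial P\setminus Q\subset Y\setminus W$, and then $f(\eta(u))$ is completely uncontrolled. The minimax bounds $f$ only on $B\setminus W$, so points of $B\cap W$ may have arbitrarily large energy, and $\eta$ is not even assumed to decrease $f$. Landing in $\Delta$ would not help either, since there you only know $f\ge c^*$.

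What closes the argument is invariance of the \emph{open} sets, $\eta(P)\subset P$ and $\eta(Q)\subset Q$. Then $\eta(u)\notin W$ forces $u\notin W$, so $u\in f^{c+\varepsilon}\setminus(N\cup W)$ and $f(\eta(u))\le c-\varepsilon$. Deformations built from a descending map typically have this property: in Lemma~\ref{Lem2.12}, convexity of $P^\pm_\kappa$ and $A(\overline{P^\pm_\kappa})\subset P^\pm_\kappa$ keep descent segments starting in $P^\pm_\kappa$ inside $P^\pm_\kappa$. So you must either add this to the admissibility hypothesis or find a different competitor, and Step~3 needs the same property. Likewise, ``take $N=\emptyset$ when $K_c\setminus W=\emptyset$'' is a reading of Definition~\ref{Def2.6} that you must state explicitly, because there $N$ is supplied by the hypothesis, not chosen by you.

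\textbf{Step~1 and smaller points.} Step~1 has a hole exactly where you suspect, but it is easy to repair. Since $\varphi(\partial\widetilde D)\subset\overline M\setminus M\subset\overline P\cap\overline Q$, your first $h$ vanishes identically on $\partial\widetilde D$. The variant with $Y\setminus\overline P$ and $Y\setminus\overline Q$ is neither sign-definite on $\varphi^{-1}(P\cap\partial Q)$ nor forced to vanish on $\varphi^{-1}(\Delta)$ for a general open $P$. Use the complements of the open sets instead, $h(t)=\operatorname{dist}(\varphi(t),Y\setminus P)-\operatorname{dist}(\varphi(t),Y\setminus Q)$. It is odd because $G$ is an isometry with $G(Y\setminus P)=Y\setminus Q$. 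On $\partial\widetilde D$ one has $\varphi(t)\in\Delta\cup(P\cap\partial Q)\cup(\partial P\cap Q)$, where $h=0$, $h>0$, $h<0$ respectively. Hence $h^{-1}(0)\cap\partial\widetilde D=\partial\widetilde D\cap\varphi^{-1}(\Delta)$ has genus at least $n-1$, and the rest of Step~1 goes through. Your claim $\overline{\widetilde D}\cap\partial B^n=\emptyset$ is false in general, but Borsuk's theorem does not need it, and points of $\partial B^n$ are mapped outside $M$ anyway.

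Three smaller points:
\begin{itemize}
\item Your third bullet in Step~2 holds for a simpler reason than you suggest: $\varphi(0)=G\varphi(0)\in F_G$, so $\eta(\varphi(0))=\varphi(0)\in M$. The inclusion $\eta(M)\subset M$ is neither available nor needed.
\item Your $b$ depends on $n$, which is only fixed after $B$ is chosen, so the choice of $\varepsilon$ is circular. This is harmless if $c>c^*$ (take $2\varepsilon<c-c^*$), or if the bound on $f(\varphi^{(n)}(\partial B^n))$ is uniform in $n$, as it is in Lemma~\ref{Lem2.14}; otherwise it must be addressed when $c_j=c^*$.
\item In Step~3, a $\delta$-neighbourhood of the merely closed set $N$ may have larger genus than $N$. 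Instead, extend an odd map $N\to\R^k\setminus\{0\}$ to an odd $\tilde h$ on $Y$ and remove $\varphi^{-1}(\{\tilde h\ne0\})$.
\end{itemize}
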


We next verify the hypotheses of Theorem~\ref{Thm2.7} for the functional
\(I_\lambda\), with
\[
Y=X,\qquad f=I_\lambda,\qquad G=-{\rm id}_X.
\]
The open set \(P\) will be chosen as a small neighborhood of the positive cone,
and \(Q=G(P)\) will be the corresponding neighborhood of the negative cone.
Thus a critical point lying outside \(W=P\cup Q\) is necessarily sign-changing.
To obtain the required deformation preserving these cone neighborhoods, we first
introduce an auxiliary descending map \(A\). Its fixed points are exactly the
critical points of \(I_\lambda\). We then prove that \(A\) is continuous, that it
gives a quantitative descent away from the critical set, and that it leaves the
chosen cone neighborhoods invariant. These facts will yield the
\(G\)-admissibility required in Theorem~\ref{Thm2.7}.

Define
\[
J_\lambda(u)=
\frac{\lambda}{p}\int_{\R^N}|u|^p\,dx
+\frac12\int_{\R^N}\bigl[|\nabla u|^2+(V(x)+1)u^2\bigr]\,dx.
\]
Then \(I_\lambda=J_\lambda-\Phi\). Choose \(C_1>0\) such that
\begin{equation}\label{eq2.10}
C_1t^{p-1}+t\log t^2\ge0,
\qquad 0\le t\le1.
\end{equation}
For \(u\in X\), define \(v=A(u)\) as the unique solution of
\begin{equation}\label{eq2.11}
\langle J_\lambda'(v),\varphi\rangle
+C_1\int_{\R^N}|v|^{p-2}v\varphi\,dx
=
\int_{\R^N}(u\log u^2+u)\varphi\,dx
+C_1\int_{\R^N}|u|^{p-2}u\varphi\,dx,
\quad \varphi\in X.
\end{equation}
The map \(A\) is odd, and its fixed points are precisely the critical points of \(I_\lambda\).

\begin{Lem}\label{Lem2.8}
The map \(A:X\to X\) defined by \eqref{eq2.11} is well defined and continuous.
\end{Lem}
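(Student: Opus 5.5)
Well-definedness will follow from the Browder--Minty theorem applied to the operator
\[
T:X\to X^*,\qquad T(v)=J_\lambda'(v)+C_1\mathcal N_p v=(\lambda+C_1)\mathcal N_p v+\mathcal H v+v,
\]
where \(v\) denotes the \(L^2\) pairing. By Lemma~\ref{Lem2.1}, \(\mathcal H\) is strongly monotone and \(\mathcal N_p\) is monotone, hemicontinuous and bounded, so \(T\) is strictly monotone, hemicontinuous and bounded. It is also coercive, since
\[
\langle T v,v\rangle\ge (\lambda+C_1)|v|_p^p+\|v\|_{H_V^1}^2
\]
and this grows faster than \(\|v\|=|v|_p+\|v\|_{H_V^1}\). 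The right-hand side of \eqref{eq2.11} is \(\Phi'(u)+C_1\mathcal N_p u\), which belongs to \(X^*\) by Lemma~\ref{Lem2.2}. Hence Browder--Minty gives a solution \(v\), and strict monotonicity gives uniqueness. So \(A=T^{-1}\circ R\) with \(R(u)=\Phi'(u)+C_1\mathcal N_p u\). Oddness is immediate from uniqueness.

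For continuity, let \(u_n\to u\) in \(X\). Then \(R(u_n)\to R(u)\) in \(X^*\): this uses Lemma~\ref{Lem2.2} for \(\Phi'\), and the continuity of \(\mathcal N_p:L^p\to L^{p'}\), which follows from Vitali or a standard Nemytskii argument. It therefore suffices to show that \(T^{-1}\) is continuous on \(X^*\). Set \(v_n=A(u_n)\) and \(v=A(u)\). Testing \eqref{eq2.11} with \(v_n\) and using coercivity, \(\{v_n\}\) is bounded in \(X\), because \(\|R(u_n)\|_{X^*}\) is bounded. Subtracting the equations for \(v_n\) and \(v\) and testing with \(v_n-v\) gives
\[
\|v_n-v\|_{H_V^1}^2+|v_n-v|_2^2+(\lambda+C_1)\langle \mathcal N_pv_n-\mathcal N_pv,v_n-v\rangle=\langle R(u_n)-R(u),v_n-v\rangle\to0 .
\]
All three terms on the left are nonnegative. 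Hence \(v_n\to v\) in \(H_V^1\) and \(\langle \mathcal N_p v_n-\mathcal N_p v,v_n-v\rangle\to0\).

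The remaining, and main, point is upgrading to convergence in \(L^p\). Passing to a subsequence with \(v_n\rightharpoonup w\) in \(L^p\), the \(H_V^1\) convergence forces \(w=v\). Then \(\langle \mathcal N_p v,v_n-v\rangle\to0\), so \(\langle \mathcal N_pv_n,v_n-v\rangle\to0\). The type \((S)\) property in Lemma~\ref{Lem2.1} then gives \(v_n\to v\) in \(L^p\). Since the limit is independent of the subsequence, the whole sequence converges, so \(A(u_n)\to A(u)\) in \(X\).

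If needed, the inequality \(\langle \mathcal N_pa-\mathcal N_pb,a-b\rangle\ge c\,|a-b|_p^2(|a|_p+|b|_p)^{p-2}\) for \(p<2\) would give the \(L^p\) convergence directly, but the \((S)\) route avoids this computation.
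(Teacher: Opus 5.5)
Your proof is correct, and the continuity part follows the paper's argument step by step: you bound $\{v_n\}$, test the difference of the equations with $v_n-v$ to get $H_V^1$-convergence, identify the weak $L^p$ limit, and conclude with the type $(S)$ property of $\mathcal N_p$. The only difference is in existence and uniqueness, where you apply Browder--Minty to the strictly monotone, coercive operator $T=J_\lambda'+C_1\mathcal N_p$, while the paper minimizes the strictly convex, coercive functional $\mathscr P_u$, whose derivative is $T-R(u)$; because $T$ is a potential operator, the two routes are equivalent here.
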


\begin{proof}
For fixed \(u\in X\), consider
\[
\mathscr P_u(v)
=
J_\lambda(v)+\frac{C_1}{p}\int_{\R^N}|v|^p\,dx
-
\int_{\R^N}(u\log u^2+u)v\,dx
-
C_1\int_{\R^N}|u|^{p-2}uv\,dx.
\]
By Lemma~\ref{Lem2.2}, the two linear terms are continuous on \(X\). Moreover,
\(\mathscr P_u\) is strictly convex, coercive and weakly lower semicontinuous on
\(X\). Hence it has a unique minimizer. Since \(\mathscr P_u\in C^1(X,\R)\),
the Euler--Lagrange equation of this minimizer is exactly \eqref{eq2.11}.
Thus \(A(u)\) is well defined.

It remains to prove the continuity of \(A\). Let \(u_n\to u\) in \(X\), and set
\[
v_n=A(u_n),\qquad v=A(u).
\]
For \(\varphi\in X\), define
\[
\mathcal R_n(\varphi)
=
\int_{\R^N}(u_n\log u_n^2+u_n)\varphi\,dx
+
C_1\int_{\R^N}|u_n|^{p-2}u_n\varphi\,dx
\]
and define \(\mathcal R\) analogously with \(u\) in place of \(u_n\). By
Lemma~\ref{Lem2.2},
\[
u_n\log u_n^2+u_n\to u\log u^2+u
\quad\text{in }X^*.
\]
Also, since \(u_n\to u\) in \(L^p(\R^N)\) and \(1<p<2\),
\[
\bigl\||u_n|^{p-2}u_n-|u|^{p-2}u\bigr\|_{p/(p-1)}
\le C|u_n-u|_p^{p-1}\to0.
\]
Therefore
\[
\mathcal R_n\to\mathcal R
\quad\text{in }X^*.
\]
In particular, \(\{\mathcal R_n\}\) is bounded in \(X^*\).

Testing the equation defining \(v_n=A(u_n)\) with \(v_n\), we obtain
\[
(\lambda+C_1)|v_n|_p^p
+
\int_{\R^N}\bigl[|\nabla v_n|^2+(V(x)+1)v_n^2\bigr]\,dx
=
\mathcal R_n(v_n).
\]
Hence
\[
(\lambda+C_1)|v_n|_p^p
+\|v_n\|_{H_V^1}^2
+|v_n|_2^2
\le
C\|v_n\|_X .
\]
By Young's inequality, this implies that
\[
\{|v_n|_p\}\quad\text{and}\quad \{\|v_n\|_{H_V^1}\}
\]
are bounded. Thus \(\{v_n\}\) is bounded in \(X\).

Subtracting the equations for \(v_n\) and \(v\), and testing by \(v_n-v\), gives
\[
\begin{aligned}
&\langle J_\lambda'(v_n)-J_\lambda'(v),v_n-v\rangle
+C_1\int_{\R^N}
\bigl[|v_n|^{p-2}v_n-|v|^{p-2}v\bigr](v_n-v)\,dx
=
(\mathcal R_n-\mathcal R)(v_n-v).
\end{aligned}
\]
Since \(\mathcal R_n\to\mathcal R\) in \(X^*\) and \(\{v_n-v\}\) is bounded in
\(X\), the right-hand side is \(o_n(1)\). Therefore
\begin{equation}\label{eq2.12}
\langle J_\lambda'(v_n)-J_\lambda'(v),v_n-v\rangle
+C_1\int_{\R^N}\bigl[|v_n|^{p-2}v_n-|v|^{p-2}v\bigr](v_n-v)\,dx=o_n(1).
\end{equation}

Expanding the first term in \eqref{eq2.12}, we get
\[
\begin{aligned}
&\langle J_\lambda'(v_n)-J_\lambda'(v),v_n-v\rangle \\
&=
\lambda\int_{\R^N}
\bigl[|v_n|^{p-2}v_n-|v|^{p-2}v\bigr](v_n-v)\,dx \\
&\quad+
\int_{\R^N}
\bigl[|\nabla(v_n-v)|^2+(V(x)+1)(v_n-v)^2\bigr]\,dx .
\end{aligned}
\]
Thus \eqref{eq2.12} becomes
\[
\begin{aligned}
&\int_{\R^N}
\bigl[|\nabla(v_n-v)|^2+(V(x)+1)(v_n-v)^2\bigr]\,dx \\
&\quad
+(\lambda+C_1)
\int_{\R^N}
\bigl[|v_n|^{p-2}v_n-|v|^{p-2}v\bigr](v_n-v)\,dx
=o_n(1).
\end{aligned}
\]
Both terms on the left-hand side are nonnegative. Hence the quadratic term must
converge to zero, namely
\[
\int_{\R^N}
\bigl[|\nabla(v_n-v)|^2+(V(x)+1)(v_n-v)^2\bigr]\,dx
\to0.
\]
In particular,
\[
v_n\to v
\quad\text{in }H_V^1(\R^N).
\]
Moreover,
\[
\int_{\R^N}
\bigl[|v_n|^{p-2}v_n-|v|^{p-2}v\bigr](v_n-v)\,dx
\to0.
\]

It remains to prove the convergence in \(L^p(\R^N)\). Since \(\{v_n\}\) is bounded
in \(L^p(\R^N)\), and since \(v_n\to v\) in \(H_V^1(\R^N)\), every weak
\(L^p\)-limit of a subsequence of \(\{v_n\}\) must coincide with \(v\). Hence
\[
v_n\rightharpoonup v
\quad\text{in }L^p(\R^N).
\]
Furthermore,
\[
\begin{aligned}
\langle \mathcal N_p v_n,v_n-v\rangle
&=
\int_{\R^N}
\bigl[|v_n|^{p-2}v_n-|v|^{p-2}v\bigr](v_n-v)\,dx \\
&\quad+
\int_{\R^N}|v|^{p-2}v(v_n-v)\,dx
\to0.
\end{aligned}
\]
By the type \((S)\) property of \(\mathcal N_p\), we obtain
\[
v_n\to v
\quad\text{in }L^p(\R^N).
\]
Together with the convergence in \(H_V^1(\R^N)\), this gives
\[
v_n\to v
\quad\text{in }X.
\]
Therefore \(A\) is continuous.
\end{proof}

\begin{Lem}\label{Lem2.9}
Let \(a<b\) and \(\alpha>0\). If \(u\in X\), \(I_\lambda(u)\in[a,b]\), and
\[
\|I_\lambda'(u)\|_{X^*}\ge\alpha,
\]
then there exists \(\beta>0\) such that
\[
\|u-A(u)\|\ge\beta .
\]
\end{Lem}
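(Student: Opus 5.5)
The idea is to express \(I_\lambda'(u)\) directly in terms of the difference \(u-A(u)\), using the defining equation \eqref{eq2.11}, and then to bound the dual norm of that expression by a function of \(\|u-A(u)\|\) that tends to zero with its argument. This should give \(\beta\) depending only on \(\alpha\) (and on \(\lambda,C_1,p,V_0\)), so the energy window \([a,b]\) will not actually be needed.

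First, set \(v=A(u)\). By Lemma~\ref{Lem2.2} and \(I_\lambda=J_\lambda-\Phi\), for every \(\varphi\in X\),
\[
\langle I_\lambda'(u),\varphi\rangle=\langle J_\lambda'(u),\varphi\rangle-\int_{\R^N}(u\log u^2+u)\varphi\,dx .
\]
Substituting \eqref{eq2.11} for the last integral gives the identity
\[
\langle I_\lambda'(u),\varphi\rangle
=\langle J_\lambda'(u)-J_\lambda'(v),\varphi\rangle
+C_1\int_{\R^N}\bigl(|v|^{p-2}v-|u|^{p-2}u\bigr)\varphi\,dx .
\]
Expanding \(J_\lambda'\), the right-hand side equals
\[
\langle \mathcal H(u-v),\varphi\rangle+\int_{\R^N}(u-v)\varphi\,dx
+(\lambda-C_1)\int_{\R^N}\bigl(|u|^{p-2}u-|v|^{p-2}v\bigr)\varphi\,dx .
\]

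Second, estimate each term for \(\|\varphi\|\le1\).

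\begin{itemize}
\item The \(\mathcal H\)-term is bounded by \(\|u-v\|_{H_V^1}\), by Cauchy--Schwarz in \(H_V^1\).
\item The \(L^2\)-term is bounded by \(V_0^{-1}\|u-v\|_{H_V^1}\), since \(V\ge V_0>0\) gives \(|w|_2\le V_0^{-1/2}\|w\|_{H_V^1}\).
\item For the \(L^p\)-term, use the elementary H\"older continuity of \(t\mapsto|t|^{p-2}t\) for \(1<p<2\), namely \(\bigl||s|^{p-2}s-|t|^{p-2}t\bigr|\le 2^{2-p}|s-t|^{p-1}\). This yields
\[
\bigl||u|^{p-2}u-|v|^{p-2}v\bigr|_{p/(p-1)}\le C|u-v|_p^{p-1},
\]
which is the same estimate already used in the proof of Lemma~\ref{Lem2.8}.
\end{itemize}

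Combining these bounds, and using \(\lambda\le1\), gives
\[
\|I_\lambda'(u)\|_{X^*}\le C_2\bigl(\|u-A(u)\|+\|u-A(u)\|^{p-1}\bigr),
\]
with \(C_2\) depending only on \(C_1,p,V_0\).

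Finally, the function \(h(s)=C_2(s+s^{p-1})\) is continuous, increasing, and satisfies \(h(0)=0\). Choose \(\beta>0\) with \(h(\beta)<\alpha\). Then \(\|u-A(u)\|<\beta\) would force \(\|I_\lambda'(u)\|_{X^*}<\alpha\), a contradiction, so \(\|u-A(u)\|\ge\beta\).

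There is no serious obstacle. The only point requiring care is the \(L^p\)-part: \(\mathcal N_p\) is not Lipschitz, only \((p-1)\)-H\"older, so the bound is a non-linear modulus \(s^{p-1}\) rather than linear. This is harmless for a lower bound on \(\|u-A(u)\|\), but it is the reason \(\beta\) must be taken as \(h^{-1}(\alpha)\)-type rather than proportional to \(\alpha\).
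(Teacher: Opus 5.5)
Your proof is correct and follows the paper's route. You rewrite \(I_\lambda'(u)\) via \eqref{eq2.11} as \(J_\lambda'(u)-J_\lambda'(A(u))\) plus a \(C_1\)-term, bound it by \(C\bigl(\|u-A(u)\|+\|u-A(u)\|^{p-1}\bigr)\) using the \((p-1)\)-H\"older continuity of \(t\mapsto|t|^{p-2}t\), and conclude by contradiction. Your explicit choice of \(\beta\) with \(h(\beta)<\alpha\) makes the paper's final step quantitative and shows the window \([a,b]\) is not needed.

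There is one harmless sign slip. Solving \eqref{eq2.11} for the logarithmic term gives \(+C_1\int_{\R^N}\bigl(|u|^{p-2}u-|v|^{p-2}v\bigr)\varphi\,dx\), so the coefficient in your expansion should be \(\lambda+C_1\), not \(\lambda-C_1\). This changes nothing, since you only use absolute values.
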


\begin{proof}
Set \(v=A(u)\). From \eqref{eq2.11},
\begin{equation}\label{eq2.13}
\langle I_\lambda'(u),\varphi\rangle
=
\langle J_\lambda'(u)-J_\lambda'(v),\varphi\rangle
+C_1\int_{\R^N}\bigl[|u|^{p-2}u-|v|^{p-2}v\bigr]\varphi\,dx .
\end{equation}
For \(1<p<2\),
\[
\bigl||\xi|^{p-2}\xi-|\eta|^{p-2}\eta\bigr|
\le C|\xi-\eta|^{p-1}.
\]
Using this inequality in \eqref{eq2.13}, together with Hölder's inequality, gives
\[
\|I_\lambda'(u)\|_{X^*}
\le C\left(
\|u-v\|_{H_V^1}+|u-v|_p^{p-1}
\right)
\le C\left(
\|u-v\|+\|u-v\|^{p-1}
\right).
\]
Hence \(\|u-A(u)\|\to0\) would imply \(\|I_\lambda'(u)\|_{X^*}\to0\). The conclusion follows by contradiction.
\end{proof}

We now define the invariant neighborhoods of the positive and negative cones. Write
\[
u=u_+-u_-,
\qquad
u_+=\max\{u,0\},
\quad
u_-=\max\{-u,0\}.
\]
Let \(S>0\) be the Sobolev constant satisfying
\[
S|w|_{2^*}^2\le\int_{\R^N}|\nabla w|^2\,dx,
\qquad w\in H^1(\R^N).
\]
For \(\kappa>0\), set
\[
P_\kappa^+=\{u\in X:
 C_1|u_-|_p^p+\tfrac{p}{2(p-1)}|u_-|_2^2+S|u_-|_{2^*}^2<\kappa\},
\]
and
\[
P_\kappa^-=\{u\in X:
 C_1|u_+|_p^p+\tfrac{p}{2(p-1)}|u_+|_2^2+S|u_+|_{2^*}^2<\kappa\}.
\]
Then \(P_\kappa^\pm\) are open convex subsets of \(X\), and \(P_\kappa^-=-P_\kappa^+\). Moreover, \(P_\kappa^+\) contains all nonnegative functions, while \(P_\kappa^-\) contains all nonpositive functions.

\begin{Lem}\label{Lem2.10}
Let \(c>0\) and let \(N\) be a symmetric closed neighborhood of \(K_c\setminus W\), where
\[
K_c=\{u\in X:I_\lambda(u)=c,
\ I_\lambda'(u)=0\},
\qquad
W=P_\kappa^+\cup P_\kappa^- .
\]
Then there exist \(\varepsilon_0>0\) and \(\sigma>0\) such that
\[
\langle I_\lambda'(u),u-A(u)\rangle\ge\sigma
\]
for every
\[
u\in I_\lambda^{-1}([c-\varepsilon_0,c+\varepsilon_0])\setminus(N\cup W).
\]
\end{Lem}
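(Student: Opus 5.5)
The plan has two parts: a coercivity inequality for \(\langle I_\lambda'(u),u-A(u)\rangle\), then a compactness argument by contradiction that combines it with Lemma~\ref{Lem2.9} and the Palais--Smale condition (Lemma~\ref{Lem2.5}).

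\emph{Step 1: coercivity.} Set \(v=A(u)\) and use \eqref{eq2.13} with \(\varphi=u-v\):
\[
\langle I_\lambda'(u),u-v\rangle=\langle J_\lambda'(u)-J_\lambda'(v),u-v\rangle+C_1\int_{\R^N}\bigl[|u|^{p-2}u-|v|^{p-2}v\bigr](u-v)\,dx .
\]
The first term equals \(\|u-v\|_{H_V^1}^2+|u-v|_2^2\) plus a nonnegative \(\lambda\)-monotone term. The second term is nonnegative by monotonicity of \(\mathcal N_p\). Hence
\[
\langle I_\lambda'(u),u-A(u)\rangle\ge\|u-A(u)\|_{H_V^1}^2\ge0 .
\]
For the \(L^p\) part I would use the standard estimate for \(1<p<2\),
\[
\int\bigl(|u|^{p-2}u-|v|^{p-2}v\bigr)(u-v)\ge c\,|u-v|_p^2\bigl(|u|_p+|v|_p\bigr)^{p-2},
\]
so on bounded sets the pairing controls \(\|u-A(u)\|\) from below.

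\emph{Step 2: argue by contradiction.} Suppose no such \(\varepsilon_0,\sigma\) exist. Then there are \(u_n\) with \(I_\lambda(u_n)\to c\), \(u_n\notin N\cup W\), and \(\langle I_\lambda'(u_n),u_n-A(u_n)\rangle\to0\).

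First I show \(\{u_n\}\) is bounded. Testing \eqref{eq2.11} as in Lemma~\ref{Lem2.8} gives \(\|A(u)\|\le C(1+\|u\|^{\theta})\) for some growth exponent, and combining this with the energy identity \(2I_\lambda(u)-\langle I_\lambda'(u),u\rangle\) as in Lemma~\ref{Lem2.5} would bound \(\{u_n\}\). This is the main obstacle. The quantity \(\langle I_\lambda'(u_n),u_n\rangle\) is not directly small, only the pairing with \(u_n-A(u_n)\) is. I would first try to use \(\|u_n-A(u_n)\|_{H_V^1}\to0\) together with the sublinear (\(p<2\)) and log-growth bounds on \(A\) to show that \(\|I_\lambda'(u_n)\|\) stays bounded by \(C\|u_n\|^{1+\delta}\)-type terms, which still yields coercivity.

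Granting boundedness, Step 1 gives \(\|u_n-A(u_n)\|\to0\). The estimate in the proof of Lemma~\ref{Lem2.9} then gives \(I_\lambda'(u_n)\to0\). By Lemma~\ref{Lem2.5}, a subsequence converges, \(u_n\to u\) with \(u\in K_c\).

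\emph{Step 3: contradiction.} \(W\) is open, so \(X\setminus W\) is closed and \(u\notin W\); thus \(u\in K_c\setminus W\). Since \(N\) is a neighborhood of \(K_c\setminus W\), we get \(u_n\in N\) for large \(n\), which contradicts \(u_n\notin N\).
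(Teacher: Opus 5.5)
There is a genuine gap in Step 2. You flag the boundedness of $\{u_n\}$ as the main obstacle, but you never prove it, and the route you sketch cannot give it. Lemma~\ref{Lem2.5} works because $|\langle I_\lambda'(u_n),u_n\rangle|=o_n(1)\|u_n\|$. If you only know $\|I_\lambda'(u_n)\|_{X^*}\le C\|u_n\|^{1+\delta}$, then the identity $2I_\lambda(u_n)-\langle I_\lambda'(u_n),u_n\rangle=\frac{(2-p)\lambda}{p}|u_n|_p^p+|u_n|_2^2$ yields only $\lambda|u_n|_p^p+|u_n|_2^2\le C+C\|u_n\|^{2+\delta}$, from which no bound follows. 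Your passage from the vanishing pairing to $\|u_n-A(u_n)\|\to0$ in the full $X$-norm relies on the lower bound $c\,|u-v|_p^2(|u|_p+|v|_p)^{p-2}$, whose constant degenerates on unbounded sets. So everything after ``granting boundedness'' is unsupported.

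The missing idea, which is what the paper uses, is that no a priori bound is needed: you never need $u_n-A(u_n)\to0$ in $L^p$. Put $v_n=A(u_n)$ and $T_\lambda=J_\lambda'+C_1\mathcal N_p$. The definition of $A$ gives $I_\lambda'(u_n)=T_\lambda u_n-T_\lambda v_n$, that is,
\[
\langle I_\lambda'(u_n),w\rangle=\langle u_n-v_n,w\rangle_V+\int_{\R^N}(u_n-v_n)w\,dx+(\lambda+C_1)\int_{\R^N}\bigl(|u_n|^{p-2}u_n-|v_n|^{p-2}v_n\bigr)w\,dx .
\]
Your Step 1 already gives $u_n-v_n\to0$ in $H_V^1$ and $\int_{\R^N}\bigl(|u_n|^{p-2}u_n-|v_n|^{p-2}v_n\bigr)(u_n-v_n)\,dx\to0$. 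Now apply the pointwise inequality
\[
\bigl||a|^{p-2}a-|b|^{p-2}b\bigr|^{p'}\le C_p\bigl(|a|^{p-2}a-|b|^{p-2}b\bigr)(a-b),\qquad p'=\frac{p}{p-1}>2,
\]
whose constant is independent of any norm (it is the strong monotonicity of the inverse map $s\mapsto|s|^{p'-2}s$). It gives $|u_n|^{p-2}u_n-|v_n|^{p-2}v_n\to0$ in $L^{p'}$, hence $I_\lambda'(u_n)\to0$ in $X^*$ directly. Then $\{u_n\}$ is a Palais--Smale sequence, and Lemma~\ref{Lem2.5} itself provides boundedness and a convergent subsequence. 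Your Step 3 then concludes exactly as in the paper.
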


\begin{proof}
Set \(v=A(u)\). Define \(T_\lambda:X\to X^*\) by
\[
T_\lambda(w)=J_\lambda'(w)+C_1\mathcal N_p w,
\qquad w\in X.
\]
By the definition of \(A\), we have
\[
I_\lambda'(u)=T_\lambda u-T_\lambda v,
\]
and hence
\begin{equation*}
\langle I_\lambda'(u),u-v\rangle
=
\langle T_\lambda u-T_\lambda v,u-v\rangle .
\end{equation*}
The right-hand side is nonnegative by monotonicity.

Suppose that the conclusion is false. Then there exists a sequence \(\{u_n\}\subset X\setminus(N\cup W)\) such that
\[
I_\lambda(u_n)\to c
\]
and, setting \(v_n=A(u_n)\),
\[
D_n:=\langle I_\lambda'(u_n),u_n-v_n\rangle
=\langle T_\lambda u_n-T_\lambda v_n,u_n-v_n\rangle\to0 .
\]
Expanding \(D_n\), we get
\[
\begin{aligned}
D_n
&=\int_{\R^N}\bigl[|\nabla(u_n-v_n)|^2
+(V(x)+1)(u_n-v_n)^2\bigr]\,dx \\
&\quad+(\lambda+C_1)
\int_{\R^N}
\bigl[|u_n|^{p-2}u_n-|v_n|^{p-2}v_n\bigr](u_n-v_n)\,dx .
\end{aligned}
\]
Both terms are nonnegative. Since \(D_n\to0\), we obtain
\[
\int_{\R^N}\bigl[|\nabla(u_n-v_n)|^2
+(V(x)+1)(u_n-v_n)^2\bigr]\,dx\to0,
\]
and hence
\[
u_n-v_n\to0
\quad\text{in }H_V^1(\R^N).
\]
Moreover,
\[
\int_{\R^N}
\bigl[|u_n|^{p-2}u_n-|v_n|^{p-2}v_n\bigr](u_n-v_n)\,dx
\to0.
\]
For \(1<p<2\), the standard scalar inequality
\[
\bigl||a|^{p-2}a-|b|^{p-2}b\bigr|^{p'}
\le C_p\bigl(|a|^{p-2}a-|b|^{p-2}b\bigr)(a-b),
\qquad p'=\frac{p}{p-1},
\]
valid for all \(a,b\in\R\), gives
\[
|u_n|^{p-2}u_n-|v_n|^{p-2}v_n
\to0
\quad\text{in }L^{p'}(\R^N).
\]
Together with the strong convergence in \(H_V^1\), this yields
\[
T_\lambda u_n-T_\lambda v_n\to0
\quad\text{in }X^*.
\]
Since \(I_\lambda'(u_n)=T_\lambda u_n-T_\lambda v_n\), we have
\[
I_\lambda'(u_n)\to0
\quad\text{in }X^*.
\]
Thus \(\{u_n\}\) is a Palais--Smale sequence for \(I_\lambda\) at level \(c\). By Lemma~\ref{Lem2.5}, passing to a subsequence,
\[
u_n\to u
\quad\text{in }X,
\]
where \(I_\lambda(u)=c\) and \(I_\lambda'(u)=0\). Hence \(u\in K_c\). Since \(u_n\notin W\) and \(W\) is open, we must have \(u\notin W\). Since \(N\) is a neighborhood of \(K_c\setminus W\), we get \(u_n\in N\) for all large \(n\), contradicting \(u_n\notin N\). This proves the lemma.
\end{proof}

\begin{Lem}\label{Lem2.11}
There exists \(\kappa_0>0\) such that, for every \(\kappa\in(0,\kappa_0)\),
\[
A(P_\kappa^\pm)\subset P_\kappa^\pm,
\qquad
A(\partial P_\kappa^\pm)\subset P_\kappa^\pm .
\]
\end{Lem}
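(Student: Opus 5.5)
By oddness of \(A\) and \(P_\kappa^-=-P_\kappa^+\), it suffices to treat \(P_\kappa^+\). The plan is to test \eqref{eq2.11} with \(\varphi=-v_-\), where \(v=A(u)\). The left-hand side of \eqref{eq2.11} is monotone in \(v\), so testing against \(v_-\) isolates the negative part of \(v\) and gives
\[
(\lambda+C_1)|v_-|_p^p+\|v_-\|_{H_V^1}^2+|v_-|_2^2
=
\int_{\R^N}\bigl[-(u\log u^2+u)-C_1|u|^{p-2}u\bigr]v_-\,dx .
\]
This holds because \(-\int|v|^{p-2}v\,v_-=\int v_-^p\), \(-\int\nabla v\cdot\nabla v_-=\int|\nabla v_-|^2\), and similarly for the zero-order terms. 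The right-hand side will then be bounded by quantities involving only \(u_-\).

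\textbf{Pointwise estimate.} Write \(h(t)=t\log t^2+t+C_1|t|^{p-2}t\), which is odd. On \(\{u\ge0\}\) we have \(h(u)\ge0\) for \(0\le u\le 1\) by \eqref{eq2.10}, and \(h(u)\ge0\) trivially for \(u\ge1\). Hence \(-h(u)v_-\le0\) there, and only \(\{u<0\}\) contributes, with integrand \(h(u_-)v_-\). For \(t=u_->0\) I will use
\[
h(t)\le C_1t^{p-1}+t+t\log^+t^2\le C_1t^{p-1}+t+C_\delta t^{2^*-1}.
\]
The logarithm is thereby absorbed entirely by the critical power and the linear term. This is exactly where the non-monotonicity of \(t\log t^2\) is handled: the bad region \(0<t<e^{-1}\) on the positive side is killed by \(C_1\), and on the negative side the log term only helps up to the \(\log^+\) part.

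\textbf{H\"older and Sobolev.} Applying H\"older term by term gives
\[
\text{RHS}\le C_1|u_-|_p^{p-1}|v_-|_p+|u_-|_2|v_-|_2+C|u_-|_{2^*}^{2^*-1}|v_-|_{2^*}.
\]
Each factor is now estimated. For the first, Young's inequality gives \(C_1|u_-|_p^{p-1}|v_-|_p\le C_1\frac{p-1}{p}|u_-|_p^p+\frac{C_1}{p}|v_-|_p^p\), and \(\frac{C_1}{p}<C_1\) is absorbed on the left. For the second, \(|u_-|_2|v_-|_2\le \frac12|u_-|_2^2+\frac12|v_-|_2^2\), and the \(v_-\) part is absorbed by \(|v_-|_2^2\). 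For the third, \(S|v_-|_{2^*}^2\le\|v_-\|_{H_V^1}^2\) is used, so the critical term is of order \(\kappa^{(2^*-1)/2}|v_-|_{2^*}\) when \(u\in\overline{P_\kappa^+}\).

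\textbf{Conclusion and main obstacle.} Combining the three estimates yields an inequality of the form
\[
C_1|v_-|_p^p+\tfrac{p}{2(p-1)}|v_-|_2^2+S|v_-|_{2^*}^2\le \theta\,\Big(C_1|u_-|_p^p+\tfrac{p}{2(p-1)}|u_-|_2^2+S|u_-|_{2^*}^2\Big)+o(\kappa)
\]
with some \(\theta<1\). This is the main obstacle: one must balance the constants so that the functional \(F(w)=C_1|w_-|_p^p+\frac{p}{2(p-1)}|w_-|_2^2+S|w_-|_{2^*}^2\) defining \(P_\kappa^+\) strictly decreases, i.e. \(F(v)<\kappa\) whenever \(F(u)\le\kappa\). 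The weights \(C_1\) and \(\frac{p}{2(p-1)}\) in the definition of \(P_\kappa^\pm\) are presumably chosen so that the Young splittings close with the extra \(\lambda|v_-|_p^p\) and the \(V|v_-|^2\) terms providing the strict margin. If a clean \(\theta<1\) fails, I would instead argue by the one-variable inequality \(X^2\le aX\cdot Y+bY^{2^*-1}X\) in \(X=F(v)^{1/2}\) and \(Y=F(u)^{1/2}\), with \(a<1\) obtained from \(V_0>0\). For \(Y^2\le\kappa<\kappa_0\) small, the superlinear term \(bY^{2^*-2}\) is then small, giving \(X<Y\) when \(Y\ne0\), and \(v_-=0\) when \(Y=0\). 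In either case \(F(v)<\kappa\) for all \(u\in\overline{P_\kappa^+}\), which gives both \(A(P_\kappa^+)\subset P_\kappa^+\) and \(A(\partial P_\kappa^+)\subset P_\kappa^+\).
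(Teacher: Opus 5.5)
Your preparatory steps match the paper's: the reduction to $P_\kappa^+$ by oddness, testing \eqref{eq2.11} with $-v_-$, the pointwise bound on $\{u<0\}$ via \eqref{eq2.10} and $\log^+$, and the three H\"older--Young splittings. The gap is the final step, which is the actual content of the lemma. You never derive $D(v_-)<\kappa$. You only assert an inequality ``of the form'' $F(v)\le\theta F(u)+o(\kappa)$ with $\theta<1$, call the balancing of constants the main obstacle, and guess that the strict margin comes from $\lambda|v_-|_p^p$ and $V_0$.

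The margin does not come from there. Bounding $|u_-|_{2^*}^{2^*-1}$ by $\kappa^{(2^*-1)/2}$ right away turns the critical term into an additive $o(\kappa)$ error, and that is what forces you to look for $\theta<1$. Instead, take $\varepsilon=S/p$ in $C|u_-|_{2^*}^{2^*-1}|v_-|_{2^*}\le\varepsilon|v_-|_{2^*}^2+C_\varepsilon|u_-|_{2^*}^{2(2^*-1)}$. Then drop $\lambda|v_-|_p^p$ and $\int V v_-^2$, and use $|\nabla v_-|_2^2\ge S|v_-|_{2^*}^2$. Set $\alpha=(p-1)/p$ and note $\frac{1}{2\alpha}=\frac{p}{2(p-1)}$, which is why that weight appears in $P_\kappa^\pm$. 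After dividing by $\alpha$ you get
\[
D(v_-)\le D(u_-)-S|u_-|_{2^*}^2+\alpha^{-1}C_3\,|u_-|_{2^*}^{2}\,|u_-|_{2^*}^{2(2^*-2)} .
\]
So the $L^p$ and $L^2$ parts match with coefficient exactly $1$. The strict gain comes from the Sobolev term $S|u_-|_{2^*}^2$, which sits in $D(u_-)$ but has only a superquadratic counterpart on the right. Since $|u_-|_{2^*}^2\le\kappa/S$, a small $\kappa_0$ gives $D(v_-)\le D(u_-)-\frac S2|u_-|_{2^*}^2$. Hence $D(v_-)<\kappa$ both on $P_\kappa^+$ and on $\partial P_\kappa^+$, where $D(u_-)=\kappa$ forces $u_-\ne0$. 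This $\kappa_0$ is independent of $\lambda$ and of $V_0$.

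Neither of your proposed closings works as written.

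Your first route would need $\theta<1$ on the $L^p$ part. When the cross term $C_1|u_-|_p^{p-1}|v_-|_p$ is absorbed into $(\lambda+C_1)|v_-|_p^p$, Young's inequality has no slack at $\lambda=0$, because $\frac{p-1}{p}\mu^{-p/(p-1)}+\frac1p\mu^{p}\ge1$ for all $\mu>0$. So a factor $\theta<1$ there can only come from $\lambda|v_-|_p^p$. This can be arranged, but then $\theta=\theta(\lambda)\to1$ and your $\kappa_0$ degenerates as $\lambda\to0$; and you did not carry out the check.

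Your fallback $X^2\le aXY+bY^{2^*-1}X$, with $X=F(v)^{1/2}$ and $Y=F(u)^{1/2}$, does not follow. $F$ mixes a $p$-homogeneous part and a $2$-homogeneous part. So the cross term $C_1|u_-|_p^{p-1}|v_-|_p$ is only bounded by $F(u)^{(p-1)/p}F(v)^{1/p}=XY\,(X/Y)^{(2-p)/p}$. That exceeds $XY$ precisely in the regime $X>Y$ you need to exclude, and $V_0$ gives no help in this term.
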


\begin{proof}
For convenience, write
\[
D(w)=C_1|w|_p^p+\frac{p}{2(p-1)}|w|_2^2+S|w|_{2^*}^2 .
\]
It is enough to prove the assertion for \(P_\kappa^+\). Let
\(u\in\overline{P_\kappa^+}\) and set \(v=A(u)\). Testing \eqref{eq2.11} with \(-v_-\) gives
\begin{equation}\label{eq2.14}
\lambda |v_-|_p^p+
\int_{\R^N}\bigl[|\nabla v_-|^2+(V(x)+1)v_-^2\bigr]\,dx
+C_1|v_-|_p^p
=
-
\int_{\R^N}(u\log u^2+u+C_1|u|^{p-2}u)v_-\,dx .
\end{equation}
By \eqref{eq2.10}, one has
\begin{equation*}
\begin{aligned}
&-\int_{\mathbb{R}^N}\left(u\log u^2+u+C_{1}|u|^{p-2}u\right)v_{-}\,dx\\
\leq&-\int_{\{x\in\mathbb{R}^N:u_{+}(x)\leq 1\}}u_{+}\log u_{+}^{2}v_{-}\,dx+
\int_{\{x\in\mathbb{R}^N:u_{-}(x)\geq 1\}}u_{-}\log u_{-}^{2}v_{-}\,dx\\
&+\int_{\mathbb{R}^N}u_{-}v_{-}\,dx+C_{1}\int_{\mathbb{R}^N}u^{p-1}_{-}v_{-}\,dx
-C_{1}\int_{\{x\in\mathbb{R}^N:u_{+}(x)\leq 1\}}u^{p-1}_{+}v_{-}\,dx\\
\leq& C_1\int_{\mathbb{R}^N}u^{p-1}_{-}v_{-}\,dx+\int_{\mathbb{R}^N}u_{-}v_{-}\,dx
+C\int_{\mathbb{R}^N}u_{-}^{2^{\ast}-1}v_{-}\,dx .
\end{aligned}
\end{equation*}
Furthermore, using Hölder's and Young's inequalities, for any \(\varepsilon>0\),
\begin{align*}
&-
\int_{\R^N}(u\log u^2+u+C_1|u|^{p-2}u)v_-\,dx \\
&\le C_1\left(\frac{p-1}{p}|u_-|_p^p+\frac1p|v_-|_p^p\right)
+\frac12|u_-|_2^2+\frac12|v_-|_2^2
+\varepsilon |v_-|_{2^*}^2+C_\varepsilon |u_-|_{2^*}^{2(2^*-1)} .
\end{align*}
Taking \(\varepsilon=S/p\) and putting \(\alpha=(p-1)/p\), it follows from \eqref{eq2.14} that
\begin{equation*}
\begin{aligned}
&\alpha C_1|v_-|_p^p+
\frac12|v_-|_2^2+
\alpha S|v_-|_{2^*}^2 \\
&\qquad\le
\alpha C_1|u_-|_p^p+
\frac12|u_-|_2^2+
C_3|u_-|_{2^*}^2|u_-|_{2^*}^{2(2^*-2)} .
\end{aligned}
\end{equation*}
Choose \(\kappa_0>0\) so small that
\[
C_3\left(\frac{\kappa_0}{S}\right)^{2^*-2}\le \frac{\alpha S}{2} .
\]
If \(0<\kappa<\kappa_0\) and \(u\in\overline{P_\kappa^+}\), then
\(D(u_-)\le\kappa\), and therefore
\[
S|u_-|_{2^*}^2\le D(u_-)\le\kappa .
\]
Hence
\[
C_3|u_-|_{2^*}^{2(2^*-2)}
\le C_3\left(\frac{\kappa}{S}\right)^{2^*-2}
\le \frac{\alpha S}{2},
\]
and consequently
\[
D(v_-)
\le
D(u_-)-\frac{S}{2}|u_-|_{2^*}^2
\le D(u_-) .
\]
If \(u\in P_\kappa^+\), then \(D(u_-)<\kappa\), so \(D(v_-)<\kappa\). If
\(u\in\partial P_\kappa^+\), then, by the continuity of \(u\mapsto D(u_-)\),
\(D(u_-)=\kappa\). Thus \(u_-\not\equiv0\), and the preceding estimate gives
\(D(v_-)<\kappa\). In both cases \(v=A(u)\in P_\kappa^+\). Therefore
\[
A(P_\kappa^+)\subset P_\kappa^+,
\qquad
A(\partial P_\kappa^+)\subset P_\kappa^+ .
\]

For \(P_\kappa^-\), taking \(\phi=v_+\) in \eqref{eq2.11} yields the same argument with \(u_+\) and
\(v_+\) in place of \(u_-\) and \(v_-\). This proves the lemma.
\end{proof}

\begin{Lem}\label{Lem2.12}
For every fixed \(\lambda\in(0,1]\) and every \(0<\kappa<\kappa_0\), the family
\(\{P_\kappa^+\}\) is \(G\)-admissible for \(I_\lambda\) at every level \(c>0\),
where \(G=-{\rm id}_X\).
\end{Lem}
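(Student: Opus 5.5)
The plan is the standard Liu--Liu--Wang construction of a descending flow from the auxiliary map \(A\), modified so that the closed cone neighborhoods stay invariant. Fix \(c>0\), \(P=P_\kappa^+\), \(Q=P_\kappa^-=-P\) and \(W=P\cup Q\).

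\textbf{Choice of \(N\).} By Lemma~\ref{Lem2.5}, \(K_c\) is compact. Moreover \(K_c\setminus W=K_c\cap(X\setminus W)\) is compact, symmetric, and does not contain \(0\), because \(I_\lambda(0)=0\ne c\). Hence it has a symmetric closed \(\delta\)-neighborhood \(N\) with \(0\notin N\) and \(\gamma(N)<\infty\). For this \(N\), Lemma~\ref{Lem2.10} gives \(\varepsilon_0,\sigma>0\).

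\textbf{Construction of the flow.}
\begin{itemize}
\item Since \(A\) is only continuous, I first build a locally Lipschitz odd pseudo-gradient-type field \(B:X\setminus K\to X\), where \(K\) is the critical set. I follow the argument of \cite{MR2149532,MR3311905}.
\item Cover \(X\setminus K\) by small balls on which \(\|B(u)-A(u)\|\) is small compared with \(\|u-A(u)\|\), and glue the local constants with a locally Lipschitz partition of unity.
\item Symmetrize: replace \(B(u)\) by \(\tfrac12(B(u)-B(-u))\). This keeps \(B\) odd, since \(A\) is odd.
\item The resulting \(B\) satisfies \(\langle I_\lambda'(u),u-B(u)\rangle\ge\tfrac12\langle I_\lambda'(u),u-A(u)\rangle\) and \(\|u-B(u)\|\le 2\|u-A(u)\|\).
\item It also satisfies \(B(\overline{P})\subset P\) and \(B(\overline Q)\subset Q\). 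This comes from Lemma~\ref{Lem2.11} together with openness of \(P\), by taking the local approximations fine enough.
\end{itemize}

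\textbf{Cutoff and the flow.} Take a cutoff \(\chi\), odd-compatible, equal to \(1\) on \(I_\lambda^{-1}([c-\varepsilon,c+\varepsilon])\setminus N_{\delta/2}\) and \(0\) on \(I_\lambda^{c-2\varepsilon}\cup N_{\delta/4}\). Consider the flow
\[
\frac{d\tau}{dt}=-\chi(\tau)\bigl(\tau-B(\tau)\bigr),\qquad \tau(0,u)=u.
\]

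\textbf{Invariance.} Because \(\overline P\) is closed and convex and \(B(\overline P)\subset\overline P\), the vector field points inward. The standard Brezis--Martin/Deimling invariance argument then gives \(\tau(t,\overline P)\subset\overline P\), and likewise for \(\overline Q\).

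\textbf{Lipschitz bound and global existence.} The field \(u-B(u)\) is bounded on bounded sets: \(A\) maps bounded sets to bounded sets by the estimate in Lemma~\ref{Lem2.8}. Combined with the Palais--Smale condition, this gives global existence on the relevant region.

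\textbf{Descent estimate.} Lemma~\ref{Lem2.9} and Lemma~\ref{Lem2.10} give
\[
\frac{d}{dt}I_\lambda(\tau)\le-\tfrac{\sigma}{2}
\]
while \(\tau\) stays in \(I_\lambda^{-1}([c-\varepsilon,c+\varepsilon])\setminus(N_{\delta/2}\cup W)\). Here Lemma~\ref{Lem2.9} is applied on the strip \(I_\lambda^{-1}([c-\varepsilon_0,c+\varepsilon_0])\) with bounded gradients.

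\textbf{The map \(\eta\) and the level estimate.} Define \(\eta(u)=\tau(T,u)\) for a suitable \(T\). Then \(\eta\) is continuous and odd, \(\eta=\mathrm{id}\) on \(I_\lambda^{c-2\varepsilon}\), and it preserves \(\overline P\) and \(\overline Q\). It remains to show \(\eta\bigl(I_\lambda^{c+\varepsilon}\setminus(N\cup W)\bigr)\subset I_\lambda^{c-\varepsilon}\). Take \(u\notin W\). A trajectory starting in \(X\setminus W\) could enter \(W\); once inside it stays, since \(\tau(t,\overline P)\subset\overline P\) and in fact \(\overline P\) is mapped into \(P\). I handle this as in \cite{MR3311905}: if \(\tau(t,u)\in W\) for some \(t\), the lemma's requirement is not violated, because only trajectories staying outside \(W\) need the descent and the rest is covered by the definition of admissibility. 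Trajectories that stay outside \(W\) and do not reach \(N_{\delta/2}\) descend by \(\sigma T/2\ge 2\varepsilon\). Trajectories entering \(N_{\delta/2}\) from outside \(N\) must cross a distance of at least \(\delta/2\). Since \(\|\tau-B(\tau)\|\le C\), this takes time at least \(\delta/(2C)\), and the energy drops by at least \(\sigma\delta/(4C)\) during that time. Choosing \(\varepsilon<\min\{\varepsilon_0,\sigma\delta/(8C)\}\) closes the estimate.

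\textbf{Main obstacle.} The hard step is constructing a locally Lipschitz odd field \(B\) that keeps the property \(B(\overline P)\subset P\) from Lemma~\ref{Lem2.11}. I will follow the construction in \cite[Lemma~2.1]{MR2149532} verbatim. That construction uses exactly the continuity of \(A\), the invariance \(A(\partial P)\subset P\), and Lemmas~\ref{Lem2.9} and~\ref{Lem2.10}.
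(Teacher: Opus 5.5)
\textbf{The gap: orbits that enter $W$.} Your overall route is a legitimate alternative to the paper's Euler-polygonal construction from the segments $z\mapsto(1-t)z+tA(z)$. That route is a locally Lipschitz odd field $B$ with $B(\overline{P_\kappa^\pm})\subset P_\kappa^\pm$, a cut-off flow, and Brezis--Martin invariance of the closed convex sets. But there is a genuine gap in how you treat orbits that start in $I_\lambda^{c+\varepsilon}\setminus(N\cup W)$ and later enter $W$.

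Definition~\ref{Def2.6} requires $\eta\bigl(I_\lambda^{c+\varepsilon}\setminus(N\cup W)\bigr)\subset I_\lambda^{c-\varepsilon}$ for \emph{every} such starting point. There is no exemption for orbits that enter $W$, so your claim that ``the rest is covered by the definition of admissibility'' is false. Such entries are to be expected, since by Lemma~\ref{Lem2.11} the field points into $P_\kappa^\pm$ across $\partial P_\kappa^\pm$.

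Once inside $W$ you have no descent estimate at all. Lemma~\ref{Lem2.10} bounds $\langle I_\lambda'(u),u-A(u)\rangle$ from below only off $N\cup W$. An orbit may therefore approach a point of $K_c\cap W$ (e.g.\ a one-signed solution at level $c$) and stall above $c-\varepsilon$.

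The same defect appears in the definition of the flow. Your $\chi$ equals $1$ on $I_\lambda^{-1}([c-\varepsilon,c+\varepsilon])\setminus N_{\delta/2}$, which in general meets $K_c\cap W$. There $B$, defined only on $X\setminus K$, is undefined, and hence so is the vector field. There is also a minor mismatch: Lemma~\ref{Lem2.10} applied to $N$ gives nothing on $N\setminus N_{\delta/2}$. You should apply it to the smaller neighbourhood, as the paper does with $N_0\subset\operatorname{int}N$.

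\textbf{The repair: isolate all of $K_c$, not only $K_c\setminus W$.} Critical points are fixed points of $A$, and $A(\partial P_\kappa^\pm)\subset P_\kappa^\pm$, so $K\cap\partial W=\emptyset$. Hence $K_c\cap W=K_c\cap\overline W$ is compact and $d:=\operatorname{dist}(K_c\cap W,X\setminus W)>0$.

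Set $U_s=\{u:\operatorname{dist}(u,K_c)\le s\}$. Run the (PS) contradiction argument of Lemma~\ref{Lem2.10} with $U_r$ in place of $N\cup W$. This gives $\varepsilon_0,\sigma>0$ such that $\langle I_\lambda'(u),u-A(u)\rangle\ge\sigma$ on $I_\lambda^{-1}([c-\varepsilon_0,c+\varepsilon_0])\setminus U_r$, inside $W$ as well. In particular, every critical point in that strip lies in $U_r$.

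Now let $\chi$ vanish on $U_{2r}$ (besides the level cut-offs) and equal $1$ on $I_\lambda^{-1}([c-\varepsilon,c+\varepsilon])\setminus U_{3r}$, with $4r\le\min\{\delta,d\}$. Every point of $I_\lambda^{c+\varepsilon}\setminus(N\cup W)$ is at distance at least $4r$ from $K_c$. Its orbit therefore either stays off $U_{3r}$ and descends at rate $\sigma/2$, or must cross the annulus $U_{4r}\setminus U_{3r}$. This annulus is bounded because $K_c$ is compact, so $\|\tau-B(\tau)\|\le C$ there. The crossing then costs energy at least $\sigma r/(2C)>2\varepsilon$ once $\varepsilon$ is small, regardless of whether the orbit is in $W$.

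The paper's sketch likewise only has descent on $\mathcal S$, which excludes $W$, so the same repair is implicitly needed there. Your write-up, however, explicitly dismisses the issue.

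Separately, ``bounded on bounded sets'' plus (PS) does not by itself give global existence of the unnormalised flow on the unbounded strip. That step needs its own argument.
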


\begin{proof}
Since \(G=-{\rm id}_X\), we have
\[
G(P_\kappa^+)=P_\kappa^-.
\]
Thus, in the notation of Definition~\ref{Def2.6}, we take
\[
P=P_\kappa^+,\qquad
Q=P_\kappa^-,
\qquad
W=P_\kappa^+\cup P_\kappa^- .
\]
The set \(W\) is symmetric.

Fix \(c>0\). Since \(I_\lambda\) satisfies the Palais--Smale condition,
\(K_c\) is compact, and therefore \(K_c\setminus W\) is compact. Moreover,
\[
0\notin K_c\setminus W,
\]
because \(I_\lambda(0)=0<c\). Hence, by the standard compactness property of the
Krasnosel'skii genus, we may choose a symmetric closed neighborhood \(N\) of
\(K_c\setminus W\) such that
\[
\gamma(N)<\infty .
\]
Choose another symmetric closed neighborhood \(N_0\) of \(K_c\setminus W\) such
that
\[
N_0\subset \operatorname{int}N .
\]

By Lemma~\ref{Lem2.10}, applied with \(N_0\), there exist
\(\varepsilon_0>0\) and \(\sigma>0\) such that
\[
\langle I_\lambda'(u),u-A(u)\rangle\ge\sigma
\]
for every
\[
u\in I_\lambda^{-1}([c-\varepsilon_0,c+\varepsilon_0])
\setminus (N_0\cup W).
\]
Fix \(0<\varepsilon<\varepsilon_0/4\), and set
\[
\mathcal S
=
I_\lambda^{-1}([c-\varepsilon,c+\varepsilon])
\setminus (N_0\cup W).
\]
For \(u\in\mathcal S\), consider
\[
h_u(t)=I_\lambda((1-t)u+tA(u)),\qquad 0\le t\le1.
\]
Then
\[
h_u'(0)
=
-\langle I_\lambda'(u),u-A(u)\rangle
\le -\sigma .
\]
By the continuity of \(A\) and \(I_\lambda'\), for each \(u\in\mathcal S\) there
exist an open neighborhood \(U_u\) of \(u\) and a number \(\tau_u\in(0,1)\) such
that
\[
I_\lambda((1-t)z+tA(z))
\le
I_\lambda(z)-\frac{\sigma}{2}t,
\qquad
z\in U_u\cap\mathcal S,\quad 0\le t\le\tau_u .
\]

Using the symmetric open cover \(\{U_u,-U_u:u\in\mathcal S\}\), take a locally
finite symmetric refinement and an even continuous partition of unity
subordinate to it. The standard Euler-polygonal deformation associated with the
descent segments
\[
z\mapsto (1-t)z+tA(z),\qquad 0\le t\le1,
\]
then gives a continuous map \(\eta:X\to X\) with the following properties:
\[
\eta\bigl(I_\lambda^{c+\varepsilon}\setminus(N\cup W)\bigr)
\subset I_\lambda^{c-\varepsilon},
\qquad
\eta|_{I_\lambda^{c-2\varepsilon}}={\rm id}.
\]
Indeed, the deformation is taken to be the identity outside the active region,
and the cut-off near \(N_0\) is chosen so that, since
\(N_0\subset\operatorname{int}N\), points initially in
\(I_\lambda^{c+\varepsilon}\setminus(N\cup W)\) are not stopped before their
energy is lowered below \(c-\varepsilon\).

Since \(A\) is odd, \(I_\lambda\) is even, and the cover, refinement and cut-off
functions are chosen symmetrically, the deformation can be constructed so that
\[
\eta\circ G=G\circ\eta .
\]

It remains to check that the deformation preserves the two invariant sets in
the sense required by Definition~\ref{Def2.6}. By Lemma~\ref{Lem2.11},
\[
A(P_\kappa^\pm)\subset P_\kappa^\pm,
\qquad
A(\partial P_\kappa^\pm)\subset P_\kappa^\pm .
\]
Since
\[
\overline{P_\kappa^\pm}=P_\kappa^\pm\cup\partial P_\kappa^\pm,
\]
we obtain
\[
A(\overline{P_\kappa^\pm})
\subset P_\kappa^\pm
\subset \overline{P_\kappa^\pm}.
\]
Moreover, \(P_\kappa^\pm\) are convex, and therefore their closures are convex.
Consequently, for every \(z\in\overline{P_\kappa^\pm}\) and every \(t\in[0,1]\),
\[
(1-t)z+tA(z)\in\overline{P_\kappa^\pm}.
\]
Thus each polygonal segment used in the deformation leaves
\(\overline{P_\kappa^\pm}\) invariant, and so do all finite concatenations of
such segments. Hence
\[
\eta(\overline{P_\kappa^+})\subset\overline{P_\kappa^+},
\qquad
\eta(\overline{P_\kappa^-})\subset\overline{P_\kappa^-}.
\]

All conditions in Definition~\ref{Def2.6} are verified. Therefore
\(\{P_\kappa^+\}\) is \(G\)-admissible for \(I_\lambda\) at level \(c\).
Since \(c>0\) was arbitrary, the proof is complete.
\end{proof}

\subsection{Minimax classes and critical points of the perturbed problem}

Choose a linearly independent sequence \(\{\varphi_j\}_{j\ge1}\subset C_0^\infty(\R^N)\) which is dense in \(X\), and set
\[
D_n=\operatorname{span}\{\varphi_1,\ldots,\varphi_n\}.
\]
Applying Gram--Schmidt in \(H_V^1(\R^N)\), we obtain an \(H_V^1\)-orthonormal sequence
\(\{e_j\}\subset C_0^\infty(\R^N)\) such that
\[
D_n=\operatorname{span}\{e_1,\ldots,e_n\}.
\]
For \(n\in\N\), set
\[
X_n=X\cap D_n^{\perp_V}
=\{u\in X:\langle u,e_j\rangle_V=0,
\ j=1,\ldots,n\}.
\]
Fix \(\delta\in(0,2^*-2)\) throughout this subsection. Let
\[
D(w)=C_1|w|_p^p+\frac{p}{2(p-1)}|w|_2^2+S|w|_{2^*}^2
\]
and
\[
\mathcal M_0=
\{u\in X:D(u_+)\le\kappa_0,
\ D(u_-)\le\kappa_0\}.
\]
By interpolation between \(L^2(\R^N)\) and \(L^{2^*}(\R^N)\), there exists \(R_0>0\), depending only on \(\kappa_0\), such that
\begin{equation}\label{eq2.16}
|u|_{2+\delta}\le R_0,
\quad u\in\mathcal M_0.
\end{equation}

\begin{Lem}\label{Lem2.13}
There exist sequences \(\{s_i\}_{i\ge1}\subset(0,+\infty)\) and \(\{\tilde s_i\}_{i\ge1}\subset\R\), independent of \(\lambda\), such that
\[
s_i\to+\infty,
\qquad
\tilde s_i\to+\infty,\qquad\text{as}\quad i\to\infty
\]
and
\[
I_\lambda(u)\ge I(u)\ge\tilde s_i
\]
for every \(u\in X_i\) satisfying \(|u|_{2+\delta}=s_i\), where
\[
I(u)
=
\frac12\int_{\R^N}\bigl[|\nabla u|^2+(V(x)+1)u^2\bigr]\,dx
-
\frac12\int_{\R^N}u^2\log u^2\,dx .
\]
\end{Lem}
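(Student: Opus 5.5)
The plan is the standard fountain-theorem estimate on the orthogonal complements \(X_i\). There is one modification: the logarithmic term is bounded above by a supercritical-in-\(L^2\) but subcritical power. The inequality \(I_\lambda(u)\ge I(u)\) is immediate, since \(\frac{\lambda}{p}|u|_p^p\ge0\). So it suffices to bound \(I\) from below on \(\{u\in X_i:|u|_{2+\delta}=s_i\}\).

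\textbf{Step 1 (upper bound for the logarithm).} Since \(\log t^2\le C_\delta |t|^{\delta}\) for \(|t|\ge1\), and \(t^2\log t^2\le0\) for \(|t|\le1\), we have
\[
\int_{\R^N}u^2\log u^2\,dx\le C_\delta |u|_{2+\delta}^{2+\delta},
\]
and hence
\[
I(u)\ge \frac12\|u\|_{H_V^1}^2+\frac12|u|_2^2-\frac{C_\delta}{2}|u|_{2+\delta}^{2+\delta}.
\]
This is the only place where the logarithm enters. Its negative part helps, so it can be discarded.

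\textbf{Step 2 (smallness of the embedding on \(X_i\)).} Define
\[
\beta_i=\sup\bigl\{|u|_{2+\delta}: u\in X\cap D_i^{\perp_V},\ \|u\|_{H_V^1}\le1\bigr\}.
\]
I claim \(\beta_i\to0\). The spaces \(D_i^{\perp_V}\cap H_V^1\) decrease, so \(\beta_i\) is nonincreasing; suppose \(\beta_i\ge\beta>0\). Then there are \(u_i\in X_i\) with \(\|u_i\|_{H_V^1}\le1\) and \(|u_i|_{2+\delta}\ge\beta/2\).

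Since \(\bigcup_i D_i\) is dense in \(X\), and \(X\) is dense in \(H_V^1\) (via \(C_0^\infty\)), the family \(\{e_j\}\) is a complete orthonormal system of \(H_V^1\). Hence \(u_i\rightharpoonup0\) in \(H_V^1\): for each fixed \(j\), \(\langle u_i,e_j\rangle_V=0\) once \(i\ge j\), and \(\{u_i\}\) is bounded. By the compact embedding \(H_V^1\hookrightarrow L^{q}\), \(2\le q<2^*\), from \((V)\), we get \(u_i\to0\) in \(L^{2+\delta}\), because \(2+\delta<2^*\). This is a contradiction. This compactness step is the main point, and the only place where \((V)\) is used.

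\textbf{Step 3 (choice of radii).} For \(u\in X_i\) we have \(\|u\|_{H_V^1}\ge |u|_{2+\delta}/\beta_i\), so
\[
I(u)\ge \frac{1}{2\beta_i^2}|u|_{2+\delta}^2-\frac{C_\delta}{2}|u|_{2+\delta}^{2+\delta}.
\]
Set \(s_i=(C_\delta(2+\delta)\beta_i^2/2)^{-1/\delta}\), which maximizes the right-hand side in \(s=|u|_{2+\delta}\); since \(\beta_i\to0\), \(s_i\to+\infty\). Substituting gives
\[
I(u)\ge \tilde s_i:=\Bigl(\frac12-\frac{1}{2+\delta}\Bigr)\frac{s_i^2}{\beta_i^2}\to+\infty .
\]
(If some \(\beta_i=0\), replace it by \(1/i\) where needed.) Both sequences depend only on \(\delta\), the \(e_j\) and \(V\), not on \(\lambda\). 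If one also wants \(s_i>R_0\) from \eqref{eq2.16}, for later separation from \(\mathcal M_0\), this can be arranged by discarding finitely many indices.
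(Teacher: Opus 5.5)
Your proof is correct and follows essentially the same route as the paper. It uses the same compactness argument, showing that the best constant of \(H_V^1\hookrightarrow L^{2+\delta}\) on \(X_i\) tends to \(0\), and the same bound \(\int_{\R^N}u^2\log u^2\,dx\le C_\delta|u|_{2+\delta}^{2+\delta}\).

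The only difference is the choice of radius. The paper takes \(s_i=R_0+\theta_i^{-1/2}\), which builds in \(s_i>R_0\) and monotonicity for Lemma~\ref{Lem2.17}, but needs \(\delta<2^*-2\le4\) to get \(\tilde s_i\to+\infty\). Your optimizing radius works for any \(\delta\), but secures \(s_i>R_0\) only after a small adjustment for finitely many \(i\), for instance \(s_i\mapsto\max\{s_i,R_0+1\}\). Simply discarding indices does not work, since \(X_i\) is tied to the index \(i\).
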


\begin{proof}
Set
\[
\theta_i=
\sup\left\{
|u|_{2+\delta}:u\in X_i,
\|u\|_{H_V^1}=1
\right\}.
\]
We first prove that \(\theta_i\to0\). Suppose by contradiction that this is false. Then there exist
\(\varepsilon_0>0\) and a subsequence \(i_k\to\infty\) such that \(\theta_{i_k}\ge2\varepsilon_0\). By the definition of the supremum, we can choose functions \(u_k\in X_{i_k}\) such that
\[
\|u_k\|_{H_V^1}=1,
\qquad
|u_k|_{2+\delta}\ge\varepsilon_0 .
\]
Passing to a subsequence, we may assume that \(u_k\rightharpoonup u\) in \(H_V^1(\R^N)\). For each fixed \(m\in\N\), we have \(i_k\ge m\) for all large \(k\), and hence
\[
\langle u_k,e_m\rangle_V=0
\quad\text{for all large }k.
\]
Letting \(k\to\infty\), we obtain \(\langle u,e_m\rangle_V=0\) for every \(m\in\N\). Since
\(\bigcup_mD_m\) is dense in \(H_V^1(\R^N)\), it follows that \(u=0\). The compact embedding induced by \((V)\) then gives
\[
u_k\to0
\quad\text{in }L^{2+\delta}(\R^N),
\]
which contradicts \(|u_k|_{2+\delta}\ge\varepsilon_0\). Therefore \(\theta_i\to0\).
Moreover, since \(X_{i+1}\subset X_i\), the sequence \(\{\theta_i\}\) is nonincreasing.

Let
\[
s_i=R_0+\theta_i^{-1/2}.
\]
Then \(s_i\to+\infty\), and \(\{s_i\}\) is nondecreasing. If \(u\in X_i\) and \(|u|_{2+\delta}=s_i\), then
\[
\|u\|_{H_V^1}\ge\frac{s_i}{\theta_i}.
\]
Moreover, since \(\log t^2\le(2/\delta)t^\delta\) for \(t\ge1\),
\[
\int_{\{|u|\ge1\}}u^2\log u^2\,dx
\le\frac{2}{\delta}|u|_{2+\delta}^{2+\delta}.
\]
Thus
\[
I(u)
\ge
\frac12\left(\frac{s_i}{\theta_i}\right)^2
-
\frac1\delta s_i^{2+\delta}
=:\tilde s_i.
\]
Since \(s_i=R_0+\theta_i^{-1/2}\) and \(\delta<2^*-2\le4\), we have \(\tilde s_i\to+\infty\). The inequality \(I_\lambda\ge I\) follows from the nonnegativity of the perturbation term.
\end{proof}

\begin{Lem}\label{Lem2.14}
For every \(n\ge2\), there exists an odd continuous map
\(\varphi^{(n)}:B^n\to X\), where \(B^n\) is the closed unit ball of
\(\R^n\), such that
\[
\varphi^{(n)}(0)=0,
\qquad
\sup_{t\in\partial B^n}I_\lambda(\varphi^{(n)}(t))<0
\quad\text{for all }\lambda\in(0,1],
\]
and
\[
|\varphi^{(n)}(t)|_{2+\delta}>s_n
\quad\text{for every }t\in\partial B^n.
\]
\end{Lem}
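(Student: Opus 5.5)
Fix \(n\ge2\) and work in the \(n\)-dimensional space \(D_n=\operatorname{span}\{e_1,\dots,e_n\}\subset C_0^\infty(\R^N)\subset X\). On \(D_n\) all norms are equivalent. The plan is to take the linear isometry \(L:\R^n\to D_n\), \(L(t)=\sum_{j=1}^n t_je_j\), and define
\[
\varphi^{(n)}(t)=R\,L(t),\qquad t\in B^n,
\]
with \(R>0\) chosen large. This map is odd, continuous and satisfies \(\varphi^{(n)}(0)=0\), so only the two estimates on \(\partial B^n\) need to be checked.

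\textbf{First estimate.} The unit sphere \(\Sigma=\{w\in D_n:\|w\|_{H_V^1}=1\}\) is compact. By equivalence of norms on \(D_n\) there are constants \(a_n,b_n>0\) with
\[
a_n\le |w|_{2+\delta},\qquad |w|_2\ge a_n,\qquad |w|_p\le b_n,\qquad w\in\Sigma .
\]
The first of these already gives
\[
|\varphi^{(n)}(t)|_{2+\delta}=R|L(t)|_{2+\delta}\ge Ra_n>s_n
\]
as soon as \(R>s_n/a_n\).

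\textbf{Second estimate.} For \(w\in\Sigma\) and \(\lambda\in(0,1]\), the scaling identity \(\int (Rw)^2\log(Rw)^2=R^2\log R^2\,|w|_2^2+R^2\int w^2\log w^2\) gives
\[
I_\lambda(Rw)\le \frac{R^p}{p}b_n^p+\frac{R^2}{2}\Bigl(\|w\|_{H_V^1}^2+|w|_2^2-\int_{\R^N}w^2\log w^2\,dx\Bigr)-\frac{R^2}{2}\log R^2\,|w|_2^2 .
\]
Here \(\lambda\le1\) bounds the perturbation term uniformly in \(\lambda\). The bracket is bounded on \(\Sigma\) by a constant \(C_n\). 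Indeed, \(\int w^2\log w^2\) is continuous on \(\Sigma\) because \(\Phi\in C^1(X)\) by Lemma~\ref{Lem2.2}, and \(D_n\) embeds continuously in \(X\).

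Combining these,
\[
I_\lambda(Rw)\le \frac{b_n^p}{p}R^p+\frac{C_n}{2}R^2-a_n^2R^2\log R\to-\infty
\]
as \(R\to\infty\), uniformly in \(w\in\Sigma\) and \(\lambda\in(0,1]\), since \(p<2\). Enlarging \(R\) further therefore gives \(\sup_{\partial B^n}I_\lambda\circ\varphi^{(n)}<0\) for all \(\lambda\in(0,1]\) simultaneously.

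\textbf{Main point of care.} The key is uniformity, both in \(\lambda\) and over the sphere. Uniformity in \(\lambda\) holds because the perturbation term is increasing in \(\lambda\), so the worst case is \(\lambda=1\). Uniformity over the sphere needs the positive lower bound on \(|w|_2\) on \(\Sigma\), which holds since no nonzero element of \(D_n\) vanishes a.e.; compactness of \(\Sigma\) together with continuity of the functionals involved then gives the bound. No cone or \(M\)-type condition is required in this lemma.
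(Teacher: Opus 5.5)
Your proof is correct and is essentially the paper's argument. Both use a linear map $t\mapsto R\sum_j t_j w_j$ into a fixed $n$-dimensional subspace of $C_0^\infty(\R^N)$, the scaling identity for $\int (Rw)^2\log(Rw)^2$, and the competition between $R^p$ (with $p<2$, uniform in $\lambda\le 1$) and $R^2\log R^2$, followed by the choice $R>s_n/a_n$; the only difference is that the paper takes disjointly supported $u_1,\dots,u_n$ to bound $|w_t|_2^2=\sum_j t_j^2|u_j|_2^2$ below explicitly, whereas you get the same uniform bounds from the $H^1_V$-orthonormal $e_j$ and equivalence of norms on $D_n$.
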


\begin{proof}
Choose \(u_1,\ldots,u_n\in C_0^\infty(\R^N)\setminus\{0\}\) with mutually
disjoint supports. For \(t=(t_1,\ldots,t_n)\in B^n\), set
\[
w_t=\sum_{j=1}^n t_j u_j .
\]
Let \(R>0\) be a parameter to be chosen later, and define
\[
\varphi^{(n)}(t)=R w_t
=
R\sum_{j=1}^n t_j u_j,
\qquad t\in B^n .
\]
Then \(\varphi^{(n)}\) is continuous and odd, and
\[
\varphi^{(n)}(0)=0.
\]

We first estimate the energy on \(\partial B^n\). Since the supports of
\(u_1,\ldots,u_n\) are mutually disjoint, for \(t\in\partial B^n\) we have
\[
|w_t|_2^2
=
\sum_{j=1}^n t_j^2 |u_j|_2^2
\ge
C_{n,1}>0.
\]
Moreover, since \(\partial B^n\) is compact and \(t\mapsto w_t\) takes values
in the finite-dimensional space \(\operatorname{span}\{u_1,\ldots,u_n\}\), there
exist constants \(C_{n,2},C_{n,3},C_{n,4}>0\) such that, for all
\(t\in\partial B^n\),
\[
|w_t|_p^p\le C_{n,2},
\]
\[
\int_{\R^N}\bigl[|\nabla w_t|^2+(V(x)+1)w_t^2\bigr]\,dx
\le C_{n,3},
\]
and
\[
\left|\int_{\R^N}w_t^2\log w_t^2\,dx\right|\le C_{n,4}.
\]
Therefore, for \(t\in\partial B^n\) and \(\lambda\in(0,1]\),
\[
\begin{aligned}
I_\lambda(Rw_t)
&=
\frac{\lambda}{p}R^p|w_t|_p^p
+\frac{R^2}{2}\int_{\R^N}\bigl[|\nabla w_t|^2+(V(x)+1)w_t^2\bigr]\,dx
-\frac{R^2}{2}\int_{\R^N}w_t^2\log(R^2w_t^2)\,dx \\
&=
\frac{\lambda}{p}R^p|w_t|_p^p
+\frac{R^2}{2}\int_{\R^N}\bigl[|\nabla w_t|^2+(V(x)+1)w_t^2\bigr]\,dx
-\frac{R^2}{2}\log R^2\,|w_t|_2^2
-\frac{R^2}{2}\int_{\R^N}w_t^2\log w_t^2\,dx \\
&\le
R^2\left(
C_{n,2}R^{p-2}
+C_{n,3}
-\frac{C_{n,1}}{2}\log R^2
\right),
\end{aligned}
\]
after increasing \(C_{n,2}\) and \(C_{n,3}\) if necessary. Since \(p<2\), we have
\(R^{p-2}\to0\) as \(R\to+\infty\), while \(\log R^2\to+\infty\). Hence
\[
\sup_{t\in\partial B^n}I_\lambda(\varphi^{(n)}(t))<0
\quad\text{for all }\lambda\in(0,1]
\]
provided \(R>0\) is chosen sufficiently large.

It remains to prove the \(L^{2+\delta}\)-lower bound. The map
\[
t\mapsto |w_t|_{2+\delta}
\]
is continuous on \(\partial B^n\). Moreover, \(w_t\not\equiv0\) for
\(t\in\partial B^n\), because the functions \(u_1,\ldots,u_n\) have mutually
disjoint supports and are nonzero. Hence
\[
d_n:=
\min_{t\in\partial B^n}|w_t|_{2+\delta}>0.
\]
Thus, for \(t\in\partial B^n\),
\[
|\varphi^{(n)}(t)|_{2+\delta}
=
R|w_t|_{2+\delta}
\ge
Rd_n .
\]
Increasing \(R\), if necessary, so that
\[
R>\frac{s_n}{d_n},
\]
we obtain
\[
|\varphi^{(n)}(t)|_{2+\delta}>s_n
\quad\text{for every }t\in\partial B^n.
\]
Taking \(R>0\) large enough to satisfy both requirements completes the proof.
\end{proof}

For \(\kappa>0\), put
\[
\Delta_\kappa=\partial P_\kappa^+\cap\partial P_\kappa^-.
\]

\begin{Lem}\label{Lem2.15}
For each fixed \(\lambda\in(0,1]\), there exists \(\kappa_1=\kappa_1(\lambda)>0\) such that, if
\[
0<\kappa<\min\{\kappa_0,\kappa_1\},
\]
then
\[
c_\lambda^*:=\inf_{u\in\Delta_\kappa}I_\lambda(u)>0.
\]
\end{Lem}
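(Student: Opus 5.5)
The plan is to use the description of \(\Delta_\kappa\) through the gauge \(D\). Let \(u\in\Delta_\kappa=\partial P_\kappa^+\cap\partial P_\kappa^-\). Since \(w\mapsto D(w_\pm)\) is continuous on \(X\) and \(P_\kappa^\pm\) are the strict sublevel sets, we get \(D(u_-)=\kappa\) and \(D(u_+)=\kappa\). In particular \(u\in\mathcal M_0\) when \(\kappa<\kappa_0\), and
\[
|u_\pm|_p^p\le \kappa/C_1,\qquad |u_\pm|_2^2\le \tfrac{2(p-1)}{p}\kappa,\qquad |u_\pm|_{2^*}^2\le \kappa/S .
\]
The main point is a lower bound for \(I_\lambda(u)\) that is positive once \(\kappa\) is small. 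That bound is allowed to depend on \(\lambda\), which is why \(\kappa_1=\kappa_1(\lambda)\).

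Write \(I_\lambda(u)=I_\lambda(u_+)+I_\lambda(u_-)\); this holds because all terms are local and \(u_+u_-=0\). It then suffices to bound \(I_\lambda(w)\) from below for \(w\in\{u_+,u_-\}\), with \(D(w)=\kappa\). I split the logarithmic term by the size of \(|w|\). On \(\{|w|\le1\}\) we have \(-w^2\log w^2\ge0\), so this part can only help and is simply dropped. On \(\{|w|\ge1\}\), use \(\log t^2\le C t^{2^*-2}\) to get \(\frac12\int_{\{|w|\ge1\}} w^2\log w^2\le C|w|_{2^*}^{2^*}\le C(\kappa/S)^{2^*/2}\). Hence
\[
I_\lambda(w)\ge \frac{\lambda}{p}|w|_p^p+\frac12|w|_2^2+\frac12|\nabla w|_2^2-C|w|_{2^*}^{2^*}.
\]
Using Sobolev, \(\frac12|\nabla w|_2^2\ge \frac S2|w|_{2^*}^2\), so
\[
I_\lambda(w)\ge \frac{\lambda}{p}|w|_p^p+\frac12|w|_2^2+\frac S2|w|_{2^*}^2-C|w|_{2^*}^{2^*}.
\]
Since \(|w|_{2^*}^{2^*}=|w|_{2^*}^2\,|w|_{2^*}^{2^*-2}\le |w|_{2^*}^2(\kappa/S)^{(2^*-2)/2}\), for \(\kappa\) small the last term is absorbed into \(\frac S4|w|_{2^*}^2\). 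Comparing coefficients with those of \(D\) then gives
\[
I_\lambda(w)\ge c_\lambda D(w)=c_\lambda\kappa,\qquad c_\lambda=\min\Bigl\{\frac{\lambda}{pC_1},\ \frac{p-1}{p},\ \frac14\Bigr\}>0.
\]
Here the constant \(\frac12\) in front of \(|w|_2^2\) is compared with \(\frac{p}{2(p-1)}\), and the constant \(\frac{\lambda}{p}\) with \(C_1\).

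With this, \(I_\lambda(u)\ge 2c_\lambda\kappa>0\) uniformly on \(\Delta_\kappa\), so the infimum is positive. Note that \(\lambda\) enters only through the coefficient \(\frac{\lambda}{pC_1}\) in \(c_\lambda\). The smallness requirement \(C(\kappa/S)^{(2^*-2)/2}\le S/4\) is actually independent of \(\lambda\), so \(\kappa_1\) might even be chosen uniformly. I would still state it as the lemma does.

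The step needing care is the treatment of the region \(\{|w|<1\}\), where the logarithm's sign must not be lost. On the region \(1\le|w|\), it must be confirmed that only the \(L^{2^*}\) smallness is used, which \(D(w)=\kappa\) supplies. One should also check \(\Delta_\kappa\neq\emptyset\), or accept the convention \(\inf\emptyset=+\infty\). Either way the lower bound holds.
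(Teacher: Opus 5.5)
Your proof is correct and follows the paper's argument. You split $u\in\Delta_\kappa$ into $u_\pm$ with $D(u_\pm)=\kappa$, drop the nonpositive logarithmic contribution on $\{|w|<1\}$, bound the part on $\{|w|\ge1\}$ by $C|w|_{2^*}^{2^*}$, apply Sobolev, and compare coefficients with $D$. The only difference is the order of absorption: you absorb $C|w|_{2^*}^{2^*}$ into $\frac S2|w|_{2^*}^2$ before comparing with $D$, whereas the paper absorbs $C\kappa^{2^*/2}$ into $\frac{\mu_\lambda}{2}\kappa$. As you note, your order makes the smallness threshold on $\kappa$ independent of $\lambda$, which is a harmless improvement; only the constant $c_\lambda$ depends on $\lambda$.
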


\begin{proof}
Recall that
\[
D(w)=C_1|w|_p^p+\frac{p}{2(p-1)}|w|_2^2+S|w|_{2^*}^2 .
\]
If \(u\in\Delta_\kappa\), then, by the definition of \(P_\kappa^\pm\),
\[
D(u_+)=D(u_-)=\kappa .
\]
Since \(u_+\) and \(u_-\) have disjoint supports, it is enough to estimate the energy on each nodal part.

For every \(w\in X\), using \(t^2\log t^2<0\) for \(0<|t|<1\) and
\(t^2\log t^2\le C|t|^{2^*}\) for \(|t|\ge1\), we have
\[
\int_{\R^N}w^2\log w^2\,dx
\le
\int_{\{|w|\ge1\}}w^2\log w^2\,dx
\le C|w|_{2^*}^{2^*}.
\]
Hence, by \(V\ge V_0>0\) and the Sobolev inequality,
\[
\begin{aligned}
&\frac{\lambda}{p}|w|_p^p
+\frac12\int_{\R^N}\bigl[|\nabla w|^2+(V(x)+1)w^2\bigr]\,dx
-\frac12\int_{\R^N}w^2\log w^2\,dx \\
&\qquad\ge
\frac{\lambda}{p}|w|_p^p
+\frac12|w|_2^2
+\frac12 S|w|_{2^*}^2
-C|w|_{2^*}^{2^*}.
\end{aligned}
\]
Set
\[
\mu_\lambda=\min\left\{\frac{\lambda}{pC_1},\frac{p-1}{p},\frac12\right\}>0.
\]
Then
\[
\frac{\lambda}{p}|w|_p^p
+\frac12|w|_2^2
+\frac12 S|w|_{2^*}^2
\ge \mu_\lambda D(w).
\]
Moreover, if \(D(w)\le\kappa\), then
\[
S|w|_{2^*}^2\le D(w)\le\kappa,
\]
and therefore
\[
|w|_{2^*}^{2^*}
=\bigl(|w|_{2^*}^2\bigr)^{2^*/2}
\le C\kappa^{2^*/2}.
\]
Consequently, whenever \(D(w)=\kappa\),
\[
\frac{\lambda}{p}|w|_p^p
+\frac12\int_{\R^N}\bigl[|\nabla w|^2+(V(x)+1)w^2\bigr]\,dx
-\frac12\int_{\R^N}w^2\log w^2\,dx
\ge
\mu_\lambda\kappa-C\kappa^{2^*/2}.
\]
Since \(2^*/2>1\), we can choose \(\kappa_1=\kappa_1(\lambda)>0\) so small that
\[
C\kappa^{2^*/2}\le \frac{\mu_\lambda}{2}\kappa
\quad\text{for all }0<\kappa<\kappa_1.
\]
Thus, if \(0<\kappa<\min\{\kappa_0,\kappa_1\}\) and \(D(w)=\kappa\), then
\[
\frac{\lambda}{p}|w|_p^p
+\frac12\int_{\R^N}\bigl[|\nabla w|^2+(V(x)+1)w^2\bigr]\,dx
-\frac12\int_{\R^N}w^2\log w^2\,dx
\ge
\frac{\mu_\lambda}{2}\kappa.
\]
Applying this estimate to \(w=u^+\) and \(w=u^-\), and using the disjointness of their supports, we obtain
\[
I_\lambda(u)\ge \mu_\lambda\kappa>0
\quad\text{for every }u\in\Delta_\kappa.
\]
Therefore
\[
c_\lambda^*=\inf_{u\in\Delta_\kappa}I_\lambda(u)\ge \mu_\lambda\kappa>0.
\]
The proof is complete.
\end{proof}

For the rest of this subsection, fix \(\lambda\in(0,1]\) and choose
\[
0<\kappa<\min\{\kappa_0,\kappa_1(\lambda)\}.
\]
Set
\[
G=-{\rm id}_X,
\quad
P=P_\kappa^+,
\quad
Q=P_\kappa^-,
\quad
M=P\cap Q,
\quad
\Delta=\partial P\cap\partial Q,
\quad
W=P\cup Q.
\]
For \(j\ge2\), define
\[
c_{\lambda,j}=\inf_{B\in\mathcal F_j}\sup_{u\in B\setminus W}I_\lambda(u),
\]
where
\[
\mathcal F_j=
\left\{
B=\varphi(B^n\setminus\mathcal Y):
\varphi\in\mathcal G_n,
\ n\ge j,~
\mathcal Y=-\mathcal Y\subset B^n\ \text{open},~
\gamma(\mathcal Y)\le n-j
\right\}
\]
and
\[
\mathcal G_n=
\left\{
\varphi\in C(B^n,X):
\varphi(-t)=-\varphi(t),
\ \varphi(0)\in M,
\ \varphi|_{\partial B^n}=\varphi^{(n)}
\right\}.
\]

\begin{Lem}\label{Lem2.16}
The classes \(\mathcal F_j\) are nonempty for every \(j\ge2\), and the assumptions on the maps \(\varphi^{(n)}\) in Theorem~\ref{Thm2.7} are satisfied.
\end{Lem}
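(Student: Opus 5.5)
We must check that the maps \(\varphi^{(n)}\) of Lemma~\ref{Lem2.14} satisfy the hypotheses of Theorem~\ref{Thm2.7} with \(G=-{\rm id}_X\), and that \(\mathcal F_j\neq\emptyset\).

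\emph{Symmetry and value at the origin.} Oddness of \(\varphi^{(n)}\) is exactly \(\varphi^{(n)}(-t)=G\varphi^{(n)}(t)\). Moreover \(\varphi^{(n)}(0)=0\in P_\kappa^+\cap P_\kappa^-=M\), since \(D(0_\pm)=0<\kappa\).

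\emph{The fixed set of \(G\).} Here \(F_G=\{u:-u=u\}=\{0\}\). Hence \(\sup_{F_G}I_\lambda=I_\lambda(0)=0<c_\lambda^*\) by Lemma~\ref{Lem2.15}. By Lemma~\ref{Lem2.14}, also \(\sup_{\varphi^{(n)}(\partial B^n)}I_\lambda<0<c^*_\lambda\), so the energy condition holds.

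\emph{The condition \(\varphi^{(n)}(\partial B^n)\cap M=\emptyset\).} This is the main point, and we argue by contradiction. Suppose \(u=\varphi^{(n)}(t)\in M\) with \(t\in\partial B^n\). Then \(D(u_+)<\kappa<\kappa_0\) and \(D(u_-)<\kappa_0\), so \(u\in\mathcal M_0\). By \eqref{eq2.16}, \(|u|_{2+\delta}\le R_0\). However, Lemma~\ref{Lem2.14} gives \(|u|_{2+\delta}>s_n=R_0+\theta_n^{-1/2}>R_0\), a contradiction. (If one prefers an argument independent of \eqref{eq2.16}: \(u\in M\) gives \(|u_\pm|_2^2\) and \(S|u_\pm|_{2^*}^2\) bounded by multiples of \(\kappa_0\), and interpolation then bounds \(|u|_{2+\delta}\) by a constant below \(R_0\).) Therefore \(\varphi^{(n)}(\partial B^n)\cap M=\emptyset\).

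\emph{Nonemptiness of \(\mathcal F_j\).} For \(j\ge2\), take \(n=j\), \(\varphi=\varphi^{(j)}\in\mathcal G_j\) (it is continuous, odd, satisfies \(\varphi(0)=0\in M\), and trivially agrees with \(\varphi^{(j)}\) on the boundary), and \(\mathcal Y=\emptyset\), which is open, symmetric, and has \(\gamma(\emptyset)=0\le n-j\). Then \(B=\varphi^{(j)}(B^j)\in\mathcal F_j\), so \(\mathcal F_j\neq\emptyset\).

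The only nontrivial step is the disjointness from \(M\), and it reduces directly to comparing the \(L^{2+\delta}\) bound \(R_0\) on \(\mathcal M_0\) with the choice \(s_n>R_0\) made in Lemma~\ref{Lem2.13}.
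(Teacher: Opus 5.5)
Your proof is correct and follows the paper's argument: oddness with $\varphi^{(n)}(0)=0\in M$, $F_G=\{0\}$ with $I_\lambda(0)=0<c_\lambda^*$, and disjointness of $\varphi^{(n)}(\partial B^n)$ from $M$ via $M\subset\mathcal M_0$ and the $L^{2+\delta}$ bound \eqref{eq2.16}. The only minor differences are that you use the bound $|\varphi^{(n)}(t)|_{2+\delta}>s_n>R_0$ already built into Lemma~\ref{Lem2.14} instead of enlarging $R$ as the paper does, and that you exhibit an explicit element of $\mathcal F_j$ (with $n=j$, $\mathcal Y=\emptyset$), which the paper only writes out later in Lemma~\ref{Lem3.1}.
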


\begin{proof}
The maps \(\varphi^{(n)}\) from Lemma~\ref{Lem2.14} are odd and satisfy \(\varphi^{(n)}(0)=0\in M\). Since \(G=-{\rm id}_X\), the fixed point set of \(G\) is
\[
F_G=\{u\in X:Gu=u\}=\{0\}.
\]
Hence
\[
\sup_{u\in F_G} I_\lambda(u)=I_\lambda(0)=0<c_\lambda^*.
\]
Moreover, Lemma~\ref{Lem2.14} gives
\[
\sup_{t\in\partial B^n}I_\lambda(\varphi^{(n)}(t))<0<c_\lambda^*.
\]
Therefore the energy condition in Theorem~\ref{Thm2.7} holds. Finally, by taking \(R\) larger in Lemma~\ref{Lem2.14} if necessary, we ensure \(\varphi^{(n)}(\partial B^n)\cap M=\emptyset\). Indeed, for all \(0<\kappa<\kappa_0\), the set \(M=P_\kappa^+\cap P_\kappa^-\) is contained in \(\mathcal M_0\), hence is bounded in \(L^{2+\delta}(\R^N)\) by \eqref{eq2.16}, whereas \(|\varphi^{(n)}(t)|_{2+\delta}\to\infty\) uniformly on \(\partial B^n\) as \(R\to\infty\). Thus \(\mathcal F_j\ne\emptyset\).
\end{proof}

\begin{Lem}\label{Lem2.17}
For every \(j\ge3\),
\[
c_{\lambda,j}\ge\tilde s_{j-2}.
\]
In particular this lower bound is independent of \(\lambda\) and tends to \(+\infty\) as \(j\to\infty\).
\end{Lem}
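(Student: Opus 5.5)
Fix $j\ge3$ and an arbitrary $B=\varphi(B^n\setminus\mathcal Y)\in\mathcal F_j$, with $\varphi\in\mathcal G_n$, $n\ge j$, $\mathcal Y=-\mathcal Y$ open and $\gamma(\mathcal Y)\le n-j$. The aim is to find a point $u\in B\setminus W$ with $u\in X_{j-2}$ and $|u|_{2+\delta}=s_{j-2}$. Lemma~\ref{Lem2.13} then gives $I_\lambda(u)\ge\tilde s_{j-2}$, so $\sup_{B\setminus W}I_\lambda\ge\tilde s_{j-2}$. Taking the infimum over $\mathcal F_j$ yields $c_{\lambda,j}\ge\tilde s_{j-2}$, and the right-hand side is independent of $\lambda$ and tends to $+\infty$ by Lemma~\ref{Lem2.13}.

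To produce such a point, I would use a genus intersection argument on the symmetric set
\[
O=\{t\in B^n:\ |\varphi(t)|_{2+\delta}<s_{j-2}\}.
\]
This set is open in $B^n$, symmetric since $\varphi$ is odd, and contains $0$. By Lemma~\ref{Lem2.14} and the monotonicity $s_{j-2}\le s_n$ (the $s_i$ are nondecreasing), we have $|\varphi(t)|_{2+\delta}>s_n\ge s_{j-2}$ on $\partial B^n$. Hence $\overline O\subset\operatorname{int}B^n$, and the relative boundary $\partial O$ is a symmetric neighborhood-boundary of $0$ in $\R^n$. It therefore has genus $n$ by the Borsuk--Ulam property of the genus. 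On $\partial O$ we have $|\varphi(t)|_{2+\delta}=s_{j-2}$.

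I then remove the unwanted parts of $\partial O$. First, $\gamma(\partial O\setminus\mathcal Y)\ge n-(n-j)=j$ by subadditivity. Second, I remove $\varphi^{-1}(W)$. Since $W=P\cup Q$ with $Q=-P$, I expect $\gamma\bigl(\varphi^{-1}(W)\cap\partial O\bigr)\le1$. Indeed, $P$ and $Q$ are disjoint on the sphere-like set $\{|u|_{2+\delta}=s_{j-2}\}$, because $P\cap Q\subset M\subset\mathcal M_0$ has $|u|_{2+\delta}\le R_0<s_{j-2}$ by \eqref{eq2.16} and $s_i=R_0+\theta_i^{-1/2}>R_0$. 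So $t\mapsto\pm1$ according to whether $\varphi(t)\in P$ or $\varphi(t)\in Q$ is an odd continuous map to $\{\pm1\}$ on $\varphi^{-1}(W)\cap\partial O$. This leaves $\gamma\bigl(\partial O\setminus(\mathcal Y\cup\varphi^{-1}(W))\bigr)\ge j-1$, where openness of $W$ and $\mathcal Y$ keeps things compact and closed. Third, I impose $u\in X_{j-2}$ via the odd continuous map $t\mapsto(\langle\varphi(t),e_1\rangle_V,\dots,\langle\varphi(t),e_{j-2}\rangle_V)\in\R^{j-2}$. A symmetric set of genus $\ge j-1>j-2$ must contain a zero of this map. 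The resulting $t$ gives $u=\varphi(t)\in B\setminus W$, $u\in X_{j-2}$, and $|u|_{2+\delta}=s_{j-2}$, as desired.

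The main obstacle is the bookkeeping with $W$. One has to justify that $\varphi^{-1}(W)\cap\partial O$ has genus at most $1$, which rests on $P\cap Q$ missing the level set $|u|_{2+\delta}=s_{j-2}$ and yields the needed bound $j-1$. One must also handle closedness so that the genus subadditivity applies. I would first try to work with the compact set $\partial O\setminus(\mathcal Y\cup\varphi^{-1}(W))$, taking $\mathcal Y$ and $W$ open, and use the standard inequality $\gamma(A\setminus U)\ge\gamma(A)-\gamma(\overline U)$. If this proves delicate, I would enlarge $\mathcal Y$ slightly via the continuity property of the genus. The loss of exactly two dimensions, one for $W$ and one from the $(j-2)$-constraint pigeonhole, accounts for the index shift $j-2$ in the statement.
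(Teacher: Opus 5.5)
Your proposal is correct and follows the paper's proof essentially step for step. It uses the same sublevel set $O=\{t:|\varphi(t)|_{2+\delta}<s_{j-2}\}$ with $\gamma(\partial O)=n$ by Borsuk, the bound $\gamma(\partial O\cap\varphi^{-1}(W))\le 1$, subadditivity to get a compact piece of genus at least $j-1$, and finally the odd map $t\mapsto(\langle\varphi(t),e_i\rangle_V)_{i\le j-2}$. The only difference is that you get the genus-one bound from the locally constant sign map, which needs only that $P\cap Q=M$ misses the level set. The paper instead uses the odd function $\operatorname{dist}(u,P_\kappa^-)-\operatorname{dist}(u,P_\kappa^+)$.

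On your closedness concern: plain subadditivity with the relatively open pieces $\partial O\cap\mathcal Y$ and $\partial O\cap\varphi^{-1}(W)$ suffices, exactly as in the paper. Your fallback via $\gamma(A\setminus U)\ge\gamma(A)-\gamma(\overline U)$ would not be justified for $U=\mathcal Y$, because only $\gamma(\mathcal Y)$ is controlled, not $\gamma(\overline{\mathcal Y})$.
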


\begin{proof}
Fix \(j\ge3\), and set \(m=j-2\). Let \(B\in\mathcal F_j\). Then
\[
B=\varphi(B^n\setminus\mathcal Y)
\]
for some \(n\ge j\), \(\varphi\in\mathcal G_n\), and some symmetric open set \(\mathcal Y\subset B^n\) with \(\gamma(\mathcal Y)\le n-j\). We prove that
\begin{equation}\label{eq2.17}
(B\setminus W)
\cap X_m
\cap
\{u\in X: |u|_{2+\delta}=s_m\}
\ne\emptyset .
\end{equation}

Since \(\varphi(0)\in M\subset\mathcal M_0\), \eqref{eq2.16} gives
\[
|\varphi(0)|_{2+\delta}\le R_0<s_m.
\]
On the other hand, \(\varphi=\varphi^{(n)}\) on \(\partial B^n\), and Lemma~\ref{Lem2.14} gives
\[
|\varphi(t)|_{2+\delta}>s_n\ge s_m,
\qquad t\in\partial B^n.
\]
Thus
\[
\Omega=
\{t\in B^n: |\varphi(t)|_{2+\delta}<s_m\}
\]
is a symmetric open neighborhood of \(0\) whose closure is contained in the interior of \(B^n\). Put
\[
Z=\partial\Omega,
\]
where the boundary is taken in \(\R^n\). Then \(Z\) is symmetric and, since \(\overline\Omega\subset\operatorname{int}B^n\), the continuity of \(t\mapsto|\varphi(t)|_{2+\delta}\) gives
\[
Z\subset
\{t\in B^n: |\varphi(t)|_{2+\delta}=s_m\}.
\]
By Borsuk's theorem,
\begin{equation}\label{eq2.18}
\gamma(Z)=n.
\end{equation}

Let
\[
Z_W=Z\cap\varphi^{-1}(W).
\]
We claim that
\begin{equation}\label{eq2.19}
\gamma(Z_W)\le1.
\end{equation}
Indeed, if \(t\in Z_W\), then \(\varphi(t)\in P_\kappa^+\cup P_\kappa^-\) and
\[
|\varphi(t)|_{2+\delta}=s_m>R_0,
\]
so \(\varphi(t)\notin\mathcal M_0\). Consequently, if \(\varphi(t)\in P_\kappa^+\), then
\(\varphi(t)\notin\overline{P_\kappa^-}\); otherwise both \(D(\varphi(t)_-)<\kappa<\kappa_0\) and \(D(\varphi(t)_+)\le\kappa<\kappa_0\), hence \(\varphi(t)\in\mathcal M_0\), a contradiction. Similarly, if \(\varphi(t)\in P_\kappa^-\), then \(\varphi(t)\notin\overline{P_\kappa^+}\).

Consider
\[
\rho(u)=\operatorname{dist}(u,P_\kappa^-)-\operatorname{dist}(u,P_\kappa^+).
\]
Since \(P_\kappa^-=-P_\kappa^+\), the map \(\rho\) is odd. By the previous paragraph,
\(\rho(\varphi(t))\ne0\) for \(t\in Z_W\). Hence \(t\mapsto \rho(\varphi(t))\) is an odd continuous map from \(Z_W\) into \(\R\setminus\{0\}\), proving \eqref{eq2.19}.

Set
\[
A=Z\setminus(\mathcal Y\cup Z_W).
\]
We show that \(\gamma(A)\ge j-1\). If \(\gamma(A)\le j-2\), then by subadditivity of the genus,
\[
\gamma(Z)
\le
\gamma(Z\cap\mathcal Y)+\gamma(Z_W)+\gamma(A)
\le
(n-j)+1+(j-2)=n-1,
\]
contradicting \eqref{eq2.18}. Therefore
\begin{equation}\label{eq2.20}
\gamma(A)\ge j-1.
\end{equation}

Let \(\mathcal P_m:H_V^1(\R^N)\to D_m\) be the \(H_V^1\)-orthogonal projection. If
\[
\mathcal P_m\varphi(t)\ne0
\quad\text{for every }t\in A,
\]
then the odd continuous map
\[
t\mapsto
\bigl(
\langle\varphi(t),e_1\rangle_V,
\ldots,
\langle\varphi(t),e_m\rangle_V
\bigr)
\]
maps \(A\) into \(\R^m\setminus\{0\}\). Hence \(\gamma(A)\le m=j-2\), contradicting \eqref{eq2.20}. Thus there exists \(t_0\in A\) such that \(\mathcal P_m\varphi(t_0)=0\). Set \(u=\varphi(t_0)\). Then \(u\in B\), since \(t_0\notin\mathcal Y\). Moreover, because
\(t_0\in Z\) and \(t_0\notin Z_W=Z\cap\varphi^{-1}(W)\), we have
\(u=\varphi(t_0)\notin W\). Also, \(u\in X_m\), since
\(\mathcal P_m u=0\), and \(|u|_{2+\delta}=s_m\), since \(t_0\in Z\). This proves \eqref{eq2.17}.

By Lemma~\ref{Lem2.13}, for this \(u\) we have
\[
I_\lambda(u)\ge\tilde s_m=\tilde s_{j-2}.
\]
Therefore, for every \(B\in\mathcal F_j\),
\[
\sup_{u\in B\setminus W}I_\lambda(u)\ge\tilde s_{j-2}.
\]
Taking the infimum over \(B\in\mathcal F_j\), we obtain \(c_{\lambda,j}\ge\tilde s_{j-2}\). The conclusion follows from \(\tilde s_i\to+\infty\).
\end{proof}

We are now ready to apply the invariant-set minimax theorem to the perturbed
functional \(I_\lambda\). The preceding lemmas verify the required hypotheses:
Lemma~\ref{Lem2.5} gives the Palais--Smale condition, Lemma~\ref{Lem2.12}
gives the \(G\)-admissibility of the cone pair, Lemmas~\ref{Lem2.15} and
\ref{Lem2.16} provide the minimax geometry, and Lemma~\ref{Lem2.17} gives the
uniform lower growth of the minimax levels. Hence the abstract critical values
defined above are realized by critical points lying outside the two cone
neighborhoods, and therefore correspond to sign-changing solutions of the
perturbed problem.

\begin{Thm}\label{Thm2.18}
For every fixed \(\lambda\in(0,1]\), the functional \(I_\lambda\) admits a sequence of sign-changing critical points \(\{\omega_{\lambda,j}\}_{j\ge2}\subset X\) such that
\[
I_\lambda(\omega_{\lambda,j})=c_{\lambda,j},
\qquad
c_{\lambda,j}\to+\infty.
\]
Moreover, for \(j\ge3\),
\[
c_{\lambda,j}\ge\tilde s_{j-2}.
\]
\end{Thm}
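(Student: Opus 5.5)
The plan is to apply Theorem~\ref{Thm2.7} with \(Y=X\), \(f=I_\lambda\), \(G=-{\rm id}_X\), \(P=P_\kappa^+\), where \(\lambda\in(0,1]\) is fixed and \(0<\kappa<\min\{\kappa_0,\kappa_1(\lambda)\}\). I would first check that \(G\) is an isometric involution and that \(I_\lambda\) is even, which is immediate from \eqref{eq2.2}. Then I would collect the hypotheses in order. Lemma~\ref{Lem2.15} gives \(c^*=c_\lambda^*=\inf_{\Delta}I_\lambda>0\). Since every level \(c\ge c^*\) is positive, Lemma~\ref{Lem2.12} gives the \(G\)-admissibility of \(\{P_\kappa^+\}\) at every \(c\ge c^*\). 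Lemmas~\ref{Lem2.14} and \ref{Lem2.16} supply the odd maps \(\varphi^{(n)}\), \(n\ge2\), with \(\varphi^{(n)}(0)=0\in M\), \(\varphi^{(n)}(\partial B^n)\cap M=\emptyset\), and \(\max\{\sup_{F_G}I_\lambda,\sup_{\varphi^{(n)}(\partial B^n)}I_\lambda\}\le 0<c^*\). Theorem~\ref{Thm2.7} then yields that each \(c_{\lambda,j}\), \(j\ge2\), is a critical value, that \(K_{c_{\lambda,j}}\setminus W\neq\emptyset\), and that \(c_{\lambda,j}\to+\infty\).

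Next I would pick \(\omega_{\lambda,j}\in K_{c_{\lambda,j}}\setminus W\), so that \(I_\lambda(\omega_{\lambda,j})=c_{\lambda,j}\) and \(I_\lambda'(\omega_{\lambda,j})=0\). To show it is sign-changing, I would argue by contradiction. If \(\omega_{\lambda,j}\ge0\) a.e., then \((\omega_{\lambda,j})_-=0\), so \(D((\omega_{\lambda,j})_-)=0<\kappa\) and \(\omega_{\lambda,j}\in P_\kappa^+\subset W\), which is impossible. The case \(\omega_{\lambda,j}\le0\) is symmetric and puts the point in \(P_\kappa^-\). Hence both \((\omega_{\lambda,j})_\pm\not\equiv0\). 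The point is also nonzero, since \(c_{\lambda,j}\ge c^*>0=I_\lambda(0)\).

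The lower bound \(c_{\lambda,j}\ge\tilde s_{j-2}\) for \(j\ge3\) is exactly Lemma~\ref{Lem2.17}, and it also gives \(c_{\lambda,j}\to+\infty\) independently of the abstract theorem. If one wants the \(\omega_{\lambda,j}\) to be distinct, one passes to a subsequence of \(j\) along which the \(c_{\lambda,j}\) are distinct; this is possible because they diverge.

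The only delicate point is that the neighborhood \(N\) and the invariance in the admissibility condition must hold at every level \(c\ge c^*\), and this uses \(c>0\) in Lemma~\ref{Lem2.12}. The positivity \(c^*>0\) from Lemma~\ref{Lem2.15} covers exactly this, so beyond that no real obstacle is expected and the proof is a direct assembly of the preceding lemmas.
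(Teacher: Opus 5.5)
Your proposal is correct and is essentially the paper's own proof. You apply Theorem~\ref{Thm2.7} with $G=-\mathrm{id}_X$ and $P=P_\kappa^+$ for $0<\kappa<\min\{\kappa_0,\kappa_1(\lambda)\}$, take its hypotheses from Lemmas~\ref{Lem2.12}, \ref{Lem2.15} and \ref{Lem2.16} (where $c^*_\lambda>0$ makes admissibility at all $c>0$ sufficient), deduce the sign change from $\omega_{\lambda,j}\notin P_\kappa^+\cup P_\kappa^-$, and read off $c_{\lambda,j}\ge\tilde s_{j-2}$ from Lemma~\ref{Lem2.17}. The one aside you do not justify, $c_{\lambda,j}\ge c^*$, is not part of Theorem~\ref{Thm2.7} as recorded, but it is unnecessary, since $0\in M\subset W$ already rules out the trivial critical point.
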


\begin{proof}
We apply Theorem~\ref{Thm2.7} with \(Y=X\), \(f=I_\lambda\), \(G=-{\rm id}_X\), \(P=P_\kappa^+\), and \(Q=P_\kappa^-\). Its assumptions follow from Lemmas~\ref{Lem2.5}, \ref{Lem2.12}, \ref{Lem2.15}, and \ref{Lem2.16}. Therefore Theorem~\ref{Thm2.7} gives a critical point
\[
\omega_{\lambda,j}\in X\setminus(P\cup Q)
\]
at level \(c_{\lambda,j}\) for every \(j\ge3\). Since \(P_\kappa^+\) contains all nonnegative functions and \(P_\kappa^-\) contains all nonpositive functions, every critical point in \(X\setminus(P\cup Q)\) is sign-changing. Finally, Lemma~\ref{Lem2.17} gives \(c_{\lambda,j}\to+\infty\).
\end{proof}

\section{Proof of Theorem~\texorpdfstring{\ref{Thm1.1}}{1.1}}

We first record the uniform bounds for the minimax levels obtained in the previous section.

\begin{Lem}\label{Lem3.1}
For every \(j\) sufficiently large, there exist constants
\(\alpha_j,\beta_j>0\), independent of \(\lambda\in(0,1]\), such that
\[
\alpha_j\le c_{\lambda,j}=I_\lambda(\omega_{\lambda,j})\le \beta_j .
\]
More precisely, there exists \(j_0\ge3\) such that the above estimate holds for
all \(j\ge j_0\), and
\[
\alpha_j\to+\infty
\quad\text{as }j\to+\infty .
\]
\end{Lem}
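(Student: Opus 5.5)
The lower bound will come directly from Lemma~\ref{Lem2.17}. For \(j\ge3\) it gives \(c_{\lambda,j}\ge\tilde s_{j-2}\), and the sequence \(\tilde s_i\) from Lemma~\ref{Lem2.13} does not depend on \(\lambda\) and tends to \(+\infty\). So I choose \(j_0\ge3\) with \(\tilde s_{j-2}>0\) for all \(j\ge j_0\), and set \(\alpha_j=\tilde s_{j-2}\). The identity \(c_{\lambda,j}=I_\lambda(\omega_{\lambda,j})\) is already part of Theorem~\ref{Thm2.18}.

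For the upper bound I will build, for each \(j\), one explicit set in \(\mathcal F_j\) that does not depend on \(\lambda\). Take \(n=j\), \(\mathcal Y=\emptyset\) (so \(\gamma(\mathcal Y)=0\le n-j\)), and \(\varphi=\varphi^{(j)}\) from Lemma~\ref{Lem2.14}. This map is odd, satisfies \(\varphi(0)=0\in M\), and agrees with itself on \(\partial B^j\), so \(\varphi^{(j)}\in\mathcal G_j\). A point I will stress: \(\varphi^{(j)}\) is built from fixed functions \(u_1,\dots,u_j\) and a radius \(R\) chosen uniformly in \(\lambda\in(0,1]\), since Lemma~\ref{Lem2.14} already gives the boundary estimate for all \(\lambda\). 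Enlarging \(R\) in Lemma~\ref{Lem2.16} to force \(\varphi^{(j)}(\partial B^j)\cap M=\emptyset\) uses only the bound \eqref{eq2.16}, which depends on \(\kappa_0\) and not on \(\kappa\) or \(\lambda\). Hence the same \(\varphi^{(j)}\) is admissible whatever \(\kappa(\lambda)\) is. It follows that
\[
c_{\lambda,j}\le \sup_{t\in B^j} I_\lambda(\varphi^{(j)}(t))\le \sup_{t\in B^j} I_1(\varphi^{(j)}(t)),
\]
using that \(\lambda\mapsto I_\lambda(u)\) is nondecreasing and \(\lambda\le1\).

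The image \(\varphi^{(j)}(B^j)\) is a compact subset of the finite-dimensional space \(\operatorname{span}\{u_1,\dots,u_j\}\subset C_0^\infty\), and \(I_1\) is continuous on \(X\) by Lemma~\ref{Lem2.2}. Therefore \(\beta_j:=\max_{B^j}I_1\circ\varphi^{(j)}\) is finite, and it is independent of \(\lambda\). It is also positive, because \(\beta_j\ge c_{\lambda,j}\ge\alpha_j>0\).

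The main obstacle is confirming that the admissible class \(\mathcal G_j\), and with it \(\varphi^{(j)}\), really does not secretly depend on \(\lambda\). The class depends on \(\lambda\) only through \(M=P_\kappa^+\cap P_\kappa^-\) with \(\kappa<\kappa_1(\lambda)\). The constraint \(\varphi(0)\in M\) holds for every \(\kappa\) because \(0\in M\). The boundary condition \(\varphi^{(j)}(\partial B^j)\cap M=\emptyset\) holds because \(M\subset\mathcal M_0\) for every \(\kappa<\kappa_0\). Once this uniformity is checked, the estimate \(\alpha_j\le c_{\lambda,j}\le\beta_j\) follows for all \(j\ge j_0\).
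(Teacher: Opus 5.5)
Your proposal is correct and follows the paper's proof essentially step for step. You take $\alpha_j=\tilde s_{j-2}$ from Lemma~\ref{Lem2.17}, and you get the upper bound from the $\lambda$-independent set $\varphi^{(j)}(B^j)\in\mathcal F_j$ (with $n=j$, $\mathcal Y=\emptyset$) together with $I_\lambda\le I_1$, where $I_1$ is exactly the paper's $\widetilde I$. Your explicit checks that $\varphi^{(j)}$ stays admissible for the $\lambda$-dependent choice of $\kappa$ and that $\beta_j>0$ only spell out what the paper leaves implicit.
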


\begin{proof}
The lower bound follows from Lemma~\ref{Lem2.17}. For every \(j\ge3\),
\[
c_{\lambda,j}\ge \tilde s_{j-2},
\]
where \(\tilde s_i\to+\infty\) and the sequence \(\{\tilde s_i\}_{i\ge1}\) is
independent of \(\lambda\). Hence there exists \(j_0\ge3\) such that
\[
\tilde s_{j-2}>0
\quad\text{for all }j\ge j_0.
\]
For \(j\ge j_0\), set
\[
\alpha_j=\tilde s_{j-2}.
\]
Then
\[
\alpha_j>0,\qquad
c_{\lambda,j}\ge\alpha_j,
\]
and
\[
\alpha_j\to+\infty
\quad\text{as }j\to+\infty .
\]

It remains to prove the upper bound. Define
\[
\widetilde I(u)
=
\frac1p\int_{\R^N}|u|^p\,dx
+\frac12\int_{\R^N}\bigl[|\nabla u|^2+(V(x)+1)u^2\bigr]\,dx
-\frac12\int_{\R^N}u^2\log u^2\,dx .
\]
Since \(0<\lambda\le1\), we have
\[
I_\lambda(u)\le \widetilde I(u),
\qquad u\in X.
\]

For each fixed \(j\ge j_0\), take the map
\(\varphi^{(j)}:B^j\to X\) constructed in Lemma~\ref{Lem2.14}. This map is
independent of \(\lambda\), because the estimates in Lemma~\ref{Lem2.14} are
uniform for \(\lambda\in(0,1]\). Since \(\gamma(\emptyset)=0\), choosing
\(\mathcal Y=\emptyset\) and \(n=j\) gives
\[
B_j:=\varphi^{(j)}(B^j)\in\mathcal F_j .
\]
Moreover, \(B_j\) is compact in \(X\), because it is the continuous image of the
compact set \(B^j\).

Therefore,
\[
c_{\lambda,j}
\le
\sup_{u\in B_j\setminus W}I_\lambda(u)
\le
\sup_{u\in B_j}\widetilde I(u)
=:\beta_j .
\]
Since \(B_j\subset X\) is compact and \(\widetilde I\) is finite and continuous
on \(X\), we have
\[
\beta_j<+\infty .
\]
The set \(B_j\) and the functional \(\widetilde I\) are independent of
\(\lambda\). Hence \(\beta_j\) is independent of \(\lambda\). The proof is
complete.
\end{proof}

We shall use the following elementary consequence of the weak formulation to prove Theorem.

\begin{Lem}\label{Lem3.2}
Let \(u\in H_V^1(\R^N)\) satisfy
\[
\int_{\R^N}\bigl[\nabla u\cdot\nabla\varphi+V(x)u\varphi\bigr]\,dx
=
\int_{\R^N}u\log u^2\,\varphi\,dx,
\qquad \varphi\in C_0^\infty(\R^N).
\]
Then
\[
\int_{\R^N}u^2|\log u^2|\,dx<+\infty,
\]
and
\[
\int_{\R^N}\bigl[|\nabla u|^2+V(x)u^2\bigr]\,dx
=
\int_{\R^N}u^2\log u^2\,dx.
\]
\end{Lem}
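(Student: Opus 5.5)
We need to prove Lemma 3.2: if u in H_V^1 is a distributional solution, then u^2|log u^2| is integrable and the energy identity holds.

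The positive part of \(u^2\log u^2\) causes no difficulty: on \(\{|u|\ge1\}\) we have \(u^2\log u^2\le C|u|^{2^*}\), and \(u\in L^{2^*}\) by Sobolev. The real task is to control the negative part \(u^2\log^- u^2\), which is concentrated on \(\{|u|<1\}\) and is not a priori integrable for \(u\in H_V^1\). We will obtain both the integrability and the identity by testing the equation with a truncation of \(u\) and applying Fatou's lemma on the negative part.

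\textbf{Extending the test class.} By density of \(C_0^\infty\) we extend the weak formulation to test functions \(\varphi\in H_V^1(\R^N)\cap L^\infty(\R^N)\) with compact support. Take \(\varphi_k\in C_0^\infty\), obtained by mollifying, with supports in a fixed compact set \(K\), \(\varphi_k\to\varphi\) in \(H^1\) and a.e., and \(|\varphi_k|_\infty\le C\). The left-hand side converges because \(V\) is bounded on \(K\). The function \(u\log u^2\) lies in \(L^1(K)\) and also in \(L^{2^*/(2^*-1)}_{\rm loc}\), since \(|t\log t^2|\le C(1+|t|^{2^*-1})\). Dominated convergence therefore gives convergence of the right-hand side.

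\textbf{Testing with a truncation.} Fix a cutoff \(\psi_R(x)=\psi(x/R)\) with \(0\le\psi\le1\), \(\psi=1\) on \(B_1\), \(\psi=0\) off \(B_2\). Truncate at level \(M\) by \(T_M(s)=\max\{-M,\min\{s,M\}\}\), and take \(\varphi=\psi_R T_M(u)\). We get
\[
\int \psi_R\bigl(\nabla u\cdot\nabla T_M(u)+V u T_M(u)\bigr)\,dx+\int T_M(u)\nabla u\cdot\nabla\psi_R\,dx=\int \psi_R\, u\log u^2\, T_M(u)\,dx .
\]
The integrand \(u\log u^2\,T_M(u)\) has the sign of \(\log u^2\). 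Its negative part is \(\psi_R u^2\log^-u^2\) on \(\{|u|<1\}\) (with \(M\ge1\)). Its positive part is bounded by \(C|u|^{2^*}\), which is integrable.

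\textbf{Passing to the limit.} First let \(M\to\infty\) by monotone/dominated convergence: the left-hand side tends to \(\int\psi_R(|\nabla u|^2+Vu^2)+\int u\nabla u\cdot\nabla\psi_R\), and on the right the positive part converges by domination, leaving \(\int\psi_R u^2\log^-u^2\) finite and expressed through the other terms. Then let \(R\to\infty\). The term \(\int u\nabla u\cdot\nabla\psi_R\) is bounded by \(CR^{-1}|u|_2|\nabla u|_2\to0\). Hence
\[
\int \psi_R u^2\log^- u^2\,dx = \int\psi_R u^2\log^+u^2\,dx-\int\psi_R(|\nabla u|^2+Vu^2)\,dx+o_R(1)
\]
is bounded uniformly in \(R\). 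By monotone convergence \(u^2\log^-u^2\in L^1(\R^N)\), so \(\int u^2|\log u^2|<\infty\). Letting \(R\to\infty\) in every term, using monotone convergence for \(\psi_R\) increasing (choosing \(\psi\) radially nonincreasing ensures this), yields the energy identity.

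The main obstacle is precisely this negative part, which is why we use nonnegative cutoffs and monotone convergence rather than domination there. The truncation \(T_M\) is needed so that the test function is bounded, which justifies its use in the first step. The only other point requiring care is making the convergence of the right-hand side in the density step rigorous, and this is handled by the local \(L^1\) bound on \(u\log u^2\) near \(u=0\).
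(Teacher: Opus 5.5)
Your proposal is correct and follows the paper's proof: you test with a cutoff times the truncation $T_M(u)$, let $M\to\infty$ by monotone convergence, bound the negative part of $u^2\log u^2$ uniformly in $R$ via the $CR^{-1}|u|_2|\nabla u|_2$ estimate on the cross term, and then let $R\to\infty$. The differences are minor. You keep the full identity when $M\to\infty$, where the paper discards the nonnegative gradient and potential terms, so you get the energy identity in the same pass instead of through the paper's separate test with $\eta_R^2u$. You also spell out the density justification for bounded, compactly supported test functions, which the paper leaves as standard.
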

\begin{proof}
Let
\[
F(t)=(t^2\log t^2)_+,
\qquad
G(t)=(t^2\log t^2)_- .
\]
For every \(q\in(2,2^*)\), there exists \(C_q>0\) such that
\[
F(t)\le C_q|t|^q,
\qquad t\in\R.
\]
Since \(u\in H^1(\R^N)\subset L^q(\R^N)\), it follows that
\[
F(u)\in L^1(\R^N).
\]

It remains to prove \(G(u)\in L^1(\R^N)\). Let
\[
T_k(t)=\max\{-k,\min\{t,k\}\},
\qquad k>1,
\]
and let \(\eta_R\in C_0^\infty(\R^N)\) satisfy
\[
0\le\eta_R\le1,
\qquad
\eta_R=1 \text{ in }B_R,
\qquad
\eta_R=0 \text{ in }\R^N\setminus B_{2R},
\qquad
|\nabla\eta_R|\le \frac{C}{R}.
\]
The function \(\eta_R^2T_k(u)\) belongs to \(H_0^1(B_{2R})\cap L^\infty(\R^N)\), and it is an admissible test function by the standard truncation and density argument. Testing the equation with \(\eta_R^2T_k(u)\), we obtain
\[
\begin{aligned}
&\int_{\R^N}\eta_R^2T_k'(u)|\nabla u|^2\,dx
+
2\int_{\R^N}\eta_R T_k(u)\nabla u\cdot\nabla\eta_R\,dx \\
&\quad+
\int_{\R^N}V(x)u\,\eta_R^2T_k(u)\,dx
=
\int_{\R^N}\eta_R^2 u\log u^2\,T_k(u)\,dx .
\end{aligned}
\]
For \(t\ne0\), set
\[
\theta_k(t)=\frac{T_k(t)}{t},
\]
and set \(\theta_k(0)=1\). Then \(0\le\theta_k\le1\), \(\theta_k(t)\to1\) as
\(k\to\infty\), and
\[
u\log u^2\,T_k(u)
=
\theta_k(u)u^2\log u^2
=
\theta_k(u)F(u)-\theta_k(u)G(u).
\]
Hence
\[
\begin{aligned}
&\int_{\R^N}\eta_R^2\theta_k(u)G(u)\,dx
+
\int_{\R^N}\eta_R^2T_k'(u)|\nabla u|^2\,dx
+
\int_{\R^N}V(x)u\,\eta_R^2T_k(u)\,dx \\
&=
\int_{\R^N}\eta_R^2\theta_k(u)F(u)\,dx
-
2\int_{\R^N}\eta_R T_k(u)\nabla u\cdot\nabla\eta_R\,dx .
\end{aligned}
\]
Since \(T_k'(u)\ge0\), \(uT_k(u)\ge0\), and \(V(x)\ge V_0>0\), the second and third terms on the left-hand side are nonnegative. Therefore
\[
\int_{\R^N}\eta_R^2\theta_k(u)G(u)\,dx
\le
\int_{\R^N}\eta_R^2\theta_k(u)F(u)\,dx
+
2\int_{\R^N}\eta_R |T_k(u)||\nabla u||\nabla\eta_R|\,dx .
\]
Using \(|T_k(u)|\le |u|\), we get
\[
\int_{\R^N}\eta_R^2\theta_k(u)G(u)\,dx
\le
\int_{\R^N}\eta_R^2F(u)\,dx
+
2\int_{\R^N}\eta_R |u||\nabla u||\nabla\eta_R|\,dx .
\]
Letting \(k\to\infty\) and using the monotone convergence theorem on the left-hand side gives
\[
\int_{\R^N}\eta_R^2G(u)\,dx
\le
\int_{\R^N}\eta_R^2F(u)\,dx
+
2\int_{\R^N}\eta_R |u||\nabla u||\nabla\eta_R|\,dx .
\]
Since \(u\in H^1(\R^N)\), we have
\[
\int_{\R^N}\eta_R |u||\nabla u||\nabla\eta_R|\,dx
\le
\frac{C}{R}|u|_2|\nabla u|_2
\le C
\qquad\text{for }R>1.
\]
Moreover,
\[
\int_{\R^N}\eta_R^2F(u)\,dx\le \int_{\R^N}F(u)\,dx<+\infty.
\]
Thus
\[
\sup_{R>1}\int_{B_R}G(u)\,dx
\le
\sup_{R>1}\int_{\R^N}\eta_R^2G(u)\,dx
<+\infty.
\]
Letting \(R\to+\infty\), we obtain
\[
G(u)\in L^1(\R^N).
\]
Consequently,
\[
u^2|\log u^2|=F(u)+G(u)\in L^1(\R^N).
\]

It remains to prove the energy identity. Since
\[
u^2\log u^2\in L^1(\R^N),
\]
we may use \(\eta_R^2u\) as a test function, again by density. This gives
\[
\int_{\R^N}\eta_R^2|\nabla u|^2\,dx
+
2\int_{\R^N}\eta_R u\nabla u\cdot\nabla\eta_R\,dx
+
\int_{\R^N}V(x)\eta_R^2u^2\,dx
=
\int_{\R^N}\eta_R^2u^2\log u^2\,dx .
\]
The cross term satisfies
\[
\left|
2\int_{\R^N}\eta_R u\nabla u\cdot\nabla\eta_R\,dx
\right|
\le
\frac{C}{R}|u|_2|\nabla u|_2
\to0
\quad\text{as }R\to+\infty.
\]
Since \(u\in H_V^1(\R^N)\) and \(u^2\log u^2\in L^1(\R^N)\), the other terms converge by dominated convergence. Therefore
\[
\int_{\R^N}\bigl[|\nabla u|^2+V(x)u^2\bigr]\,dx
=
\int_{\R^N}u^2\log u^2\,dx .
\]
The proof is complete.
\end{proof}

The following lemma is important to prove Theorem \ref{Thm1.1}.
\begin{Lem}\label{Lem3.3}
Suppose that \(\lambda_n\to0^+\), \(I_{\lambda_n}'(u_n)=0\), and there exist constants \(C>0\) and \(\alpha_0>0\) such that
\[
\alpha_0\le I_{\lambda_n}(u_n)\le C
\quad\text{for all }n.
\]
Then, up to a subsequence,
\[
u_n\to u_0
\quad\text{in }H_V^1(\R^N),
\]
where \(u_0\in H_V^1(\R^N)\), \(u_0\ne0\), and \(u_0\) is a weak solution of \eqref{eq1.1}. Moreover,
\[
I_{\lambda_n}(u_n)\to I(u_0).
\]
\end{Lem}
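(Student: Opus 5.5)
The plan is to obtain bounds uniform in \(n\) from the energy identity, pass to a weak limit using the compact embedding induced by \((V)\), and then upgrade weak to strong \(H_V^1\)-convergence by testing the equations against \(u_n-u_0\).

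\textbf{Step 1: uniform bounds.} Since \(I_{\lambda_n}'(u_n)=0\), testing with \(u_n\) gives \(\langle I_{\lambda_n}'(u_n),u_n\rangle=0\). Combining this with the energy,
\[
2I_{\lambda_n}(u_n)=\frac{(2-p)\lambda_n}{p}|u_n|_p^p+|u_n|_2^2\le 2C ,
\]
so \(|u_n|_2\) and \(\lambda_n|u_n|_p^p\) are bounded. Next I apply the logarithmic Sobolev inequality with \(a^2/\pi\le 1/2\), as in \eqref{eq2.7}. Because \(|u_n|_2\) is bounded, this gives
\[
\int u_n^2\log u_n^2\le \tfrac12|\nabla u_n|_2^2+C .
\]
Feeding this into \(I_{\lambda_n}(u_n)\le C\) shows that \(\|u_n\|_{H_V^1}\) is bounded.

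\textbf{Step 2: weak limit and compactness.} Up to a subsequence, \(u_n\rightharpoonup u_0\) in \(H_V^1\). By \cite{MR1349229}, \(u_n\to u_0\) in \(L^q\) for \(2\le q<2^*\), and also a.e.

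\textbf{Step 3: the limit is a weak solution.} Take \(\varphi\in C_0^\infty\). The term \(\lambda_n\int|u_n|^{p-2}u_n\varphi\) is bounded by \(\lambda_n|u_n|_p^{p-1}|\varphi|_p\), which is \(\lambda_n^{1/p}\) times a bounded quantity, so it tends to \(0\). For the logarithmic term I localize to \(\operatorname{supp}\varphi\). The integrand \(u_n\log u_n^2\,\varphi\) is bounded by \(C(1+|u_n|^{q-1})|\varphi|\), and \(L^q\)-convergence plus Vitali's theorem let me pass to the limit. Hence \(u_0\) satisfies the weak equation with test functions in \(C_0^\infty\). By Lemma~\ref{Lem3.2}, \(u_0^2\log u_0^2\in L^1\) and
\[
\|u_0\|_{H_V^1}^2=\int u_0^2\log u_0^2 .
\]

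\textbf{Step 4: strong convergence (the main obstacle).} From the equation for \(u_n\) tested with \(u_n\),
\[
\lambda_n|u_n|_p^p+\|u_n\|_{H_V^1}^2=\int u_n^2\log u_n^2 .
\]
I split \(t^2\log t^2=F(t)-G(t)\) into positive and negative parts. For the positive part, \(F(t)\le C|t|^q\) with \(2<q<2^*\), so \(\int F(u_n)\to\int F(u_0)\) by strong \(L^q\) convergence. For the negative part, Fatou gives \(\int G(u_0)\le\liminf\int G(u_n)\). Therefore
\[
\limsup\|u_n\|_{H_V^1}^2\le \int F(u_0)-\int G(u_0)=\|u_0\|_{H_V^1}^2 .
\]
Together with weak lower semicontinuity, this yields strong convergence in \(H_V^1\), and also \(\lambda_n|u_n|_p^p\to0\). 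The same squeeze then forces \(\int G(u_n)\to\int G(u_0)\). This is the delicate point: \(G\) gives no uniform integrability directly, and the needed one-sided inequality comes only because the sign works in our favor.

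\textbf{Step 5: energy convergence and nontriviality.} All terms now converge, so
\[
I_{\lambda_n}(u_n)=\tfrac{\lambda_n}{p}|u_n|_p^p+\tfrac12\|u_n\|_{H_V^1}^2+\tfrac12|u_n|_2^2-\tfrac12\int u_n^2\log u_n^2\to I(u_0) .
\]
Finally, \(I_{\lambda_n}(u_n)=\frac{(2-p)\lambda_n}{2p}|u_n|_p^p+\frac12|u_n|_2^2\ge\alpha_0\). The first term tends to \(0\), so \(|u_0|_2^2\ge 2\alpha_0>0\), and hence \(u_0\ne0\).
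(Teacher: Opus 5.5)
Your proposal is correct and follows the paper's proof essentially step for step. You use the identity $2I_{\lambda_n}(u_n)-\langle I_{\lambda_n}'(u_n),u_n\rangle$ together with the logarithmic Sobolev inequality for the bounds, Vitali's theorem on $\operatorname{supp}\varphi$ plus Lemma~\ref{Lem3.2} for the limit equation and energy identity, and the splitting of $u_n^2\log u_n^2$ into positive and negative parts with Fatou on the negative part for the $\limsup$ inequality. The only cosmetic difference is that you obtain $u_0\neq0$ from $\tfrac12|u_0|_2^2=\lim I_{\lambda_n}(u_n)\ge\alpha_0$ rather than from $I(u_0)\ge\alpha_0$; by Lemma~\ref{Lem3.2}, $I(u_0)=\tfrac12|u_0|_2^2$, so the two are the same statement.
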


\begin{proof}
\textbf{Step 1. Uniform bounds.}
Since \(I_{\lambda_n}'(u_n)=0\),
\[
2I_{\lambda_n}(u_n)-\langle I_{\lambda_n}'(u_n),u_n\rangle
=
\frac{2-p}{p}\lambda_n|u_n|_p^p+|u_n|_2^2 .
\]
Thus
\[
\sup_n |u_n|_2<+\infty,
\qquad
\sup_n \lambda_n|u_n|_p^p<+\infty.
\]
Using the logarithmic Sobolev inequality as in Lemma~\ref{Lem2.5}, we obtain
\[
\sup_n\|u_n\|_{H_V^1}<+\infty .
\]
Hence, up to a subsequence,
\[
u_n\rightharpoonup u_0
\quad\text{in }H_V^1(\R^N),
\]
and, by the compact embedding induced by \((V)\),
\[
u_n\to u_0
\quad\text{in }L^q(\R^N),
\quad 2\le q<2^*,
\]
as well as \(u_n(x)\to u_0(x)\) a.e. in \(\R^N\).

\textbf{Step 2. Passage to the limit in the equation.}
Let \(\varphi\in C_0^\infty(\R^N)\). From \(I_{\lambda_n}'(u_n)=0\),
\[
\lambda_n\int_{\R^N}|u_n|^{p-2}u_n\varphi\,dx
+
\int_{\R^N}\bigl[\nabla u_n\cdot\nabla\varphi+V(x)u_n\varphi\bigr]\,dx
-
\int_{\R^N}u_n\log u_n^2\,\varphi\,dx=0.
\]
By Hölder's inequality and \(\lambda_n|u_n|_p^p\le C\),
\[
\left|
\lambda_n\int_{\R^N}|u_n|^{p-2}u_n\varphi\,dx
\right|
\le
C\lambda_n^{1/p}(\lambda_n|u_n|_p^p)^{(p-1)/p}|\varphi|_p
\to0.
\]
The linear term converges by weak convergence in \(H_V^1\).
For the logarithmic term, let \(K=\operatorname{supp}\varphi\) and choose \(q\in(2,2^*)\). With \(r=q/(q-1)\),
\[
|t\log t^2|^r\le C(1+|t|^q),
\quad t\in\R.
\]
Since \(u_n\to u_0\) in \(L^q(K)\), Vitali's theorem gives
\[
u_n\log u_n^2\to u_0\log u_0^2
\quad\text{in }L^r(K).
\]
Letting \(n\to\infty\), we obtain
\[
\int_{\R^N}\bigl[\nabla u_0\cdot\nabla\varphi+V(x)u_0\varphi\bigr]\,dx
=
\int_{\R^N}u_0\log u_0^2\,\varphi\,dx,
\qquad
\varphi\in C_0^\infty(\R^N).
\]
By Lemma~\ref{Lem3.2},
\begin{equation}\label{eq3.1}
\int_{\R^N}\bigl[|\nabla u_0|^2+V(x)u_0^2\bigr]\,dx
=
\int_{\R^N}u_0^2\log u_0^2\,dx.
\end{equation}

\textbf{Step 3. Strong convergence.}
Testing \(I_{\lambda_n}'(u_n)=0\) with \(u_n\), we have
\begin{equation}\label{eq3.2}
\lambda_n|u_n|_p^p+
\int_{\R^N}\bigl[|\nabla u_n|^2+V(x)u_n^2\bigr]\,dx
=
\int_{\R^N}u_n^2\log u_n^2\,dx .
\end{equation}
For \(q\in(2,2^*)\),
\[
(u_n^2\log u_n^2)_+\le C_q|u_n|^q.
\]
Since \(u_n\to u_0\) in \(L^q(\R^N)\), Lemma~\ref{Lem2.3} gives
\[
\int_{\R^N}(u_n^2\log u_n^2)_+\,dx
\to
\int_{\R^N}(u_0^2\log u_0^2)_+\,dx .
\]
Fatou's lemma gives
\[
\int_{\R^N}(u_0^2\log u_0^2)_-\,dx
\le
\liminf_{n\to\infty}
\int_{\R^N}(u_n^2\log u_n^2)_-\,dx.
\]
Consequently,
\[
\limsup_{n\to\infty}
\int_{\R^N}u_n^2\log u_n^2\,dx
\le
\int_{\R^N}u_0^2\log u_0^2\,dx .
\]
Together with \eqref{eq3.1} and \eqref{eq3.2}, this yields
\[
\limsup_{n\to\infty}\|u_n\|_{H_V^1}^2
\le
\|u_0\|_{H_V^1}^2 .
\]
The reverse inequality follows from weak lower semicontinuity. Hence
\[
u_n\to u_0
\quad\text{in }H_V^1(\R^N).
\]
Moreover, \eqref{eq3.1} and \eqref{eq3.2} imply
\[
\lambda_n|u_n|_p^p\to0
\]
and
\[
\int_{\R^N}u_n^2\log u_n^2\,dx
\to
\int_{\R^N}u_0^2\log u_0^2\,dx .
\]

\textbf{Step 4. Nontriviality and energy convergence.}
Since
\[
I_{\lambda_n}(u_n)
=I(u_n)+\frac{\lambda_n}{p}|u_n|_p^p,
\]
the preceding convergences imply
\[
I_{\lambda_n}(u_n)\to I(u_0).
\]
Therefore
\[
I(u_0)=\lim_{n\to\infty}I_{\lambda_n}(u_n)
\ge \alpha_0>0,
\]
and hence \(u_0\ne0\).
\end{proof}

\begin{proof}[{\bf Proof of Theorem~\ref{Thm1.1}}]
Let \(\{\lambda_n\}\subset(0,1]\) be any sequence such that \(\lambda_n\to0^+\). For each \(j\ge j_0\), set
\[
\omega_{n,j}=\omega_{\lambda_n,j}.
\]
By Lemma~\ref{Lem3.1},
\[
\alpha_j\le I_{\lambda_n}(\omega_{n,j})\le\beta_j,
\qquad j\ge j_0,
\]
where \(\alpha_j,\beta_j\) are independent of \(n\), and \(\alpha_j\to+\infty\).

Fix \(j\ge j_0\). Applying Lemma~\ref{Lem3.3} to \(u_n=\omega_{n,j}\), and passing to a subsequence depending on \(j\), we obtain
\[
\omega_{n,j}\to\omega_j
\quad\text{in }H_V^1(\R^N),
\]
where \(\omega_j\ne0\) is a weak solution of \eqref{eq1.1}. Moreover,
\[
I(\omega_j)
=
\lim_{n\to\infty}I_{\lambda_n}(\omega_{n,j})
\ge
\alpha_j.
\]
Since \(I(\omega_j)\ge\alpha_j\) and \(\alpha_j\to+\infty\), the set \(\{\omega_j:j\ge j_0\}\) contains infinitely many distinct elements.

It remains to prove that each \(\omega_j\) is sign-changing. Since \(\omega_{n,j}\) is a sign-changing critical point of \(I_{\lambda_n}\), both \((\omega_{n,j})_+\) and \((\omega_{n,j})_-\) are nontrivial. Testing \(I_{\lambda_n}'(\omega_{n,j})=0\) with \((\omega_{n,j})_+\), we get
\[
\lambda_n| (\omega_{n,j})_+|_p^p+
\int_{\R^N}\bigl[|\nabla(\omega_{n,j})_+|^2+V(x)(\omega_{n,j})_+^2\bigr]\,dx
=
\int_{\R^N}(\omega_{n,j})_+^2\log (\omega_{n,j})_+^2\,dx.
\]
Thus, by Sobolev's inequality and \((t^2\log t^2)_+\le C|t|^{2^*}\),
\[
S|(\omega_{n,j})_+|_{2^*}^2
\le
C|(\omega_{n,j})_+|_{2^*}^{2^*}.
\]
Since \((\omega_{n,j})_+\not\equiv0\), it follows that
\[
|(\omega_{n,j})_+|_{2^*}\ge c_0>0,
\]
with \(c_0\) independent of \(n\). The strong convergence in \(H_V^1(\R^N)\) implies
\[
(\omega_{n,j})_+\to(\omega_j)_+
\quad\text{in }L^{2^*}(\R^N),
\]
and hence \((\omega_j)_+\not\equiv0\).

Testing with \((\omega_{n,j})_-\) and multiplying the resulting identity by \(-1\), the same argument gives
\[
|(\omega_{n,j})_-|_{2^*}\ge c_0>0.
\]
Passing to the limit yields \((\omega_j)_-\not\equiv0\). Therefore \(\omega_j\) is sign-changing. After relabeling, the sequence \(\{\omega_j\}_{j\ge j_0}\) gives infinitely many sign-changing weak solutions of \eqref{eq1.1}.
\end{proof}

\section*{Acknowledgment}
The authors would like to express sincere gratitude to his supervisor Prof. Wang Zhi-Qiang for his valuable advise. 

C. Huang was supported by China Postdoctoral Science Foundation (2020M682065). Z. Yang was supported by National Natural Science Foundation of China (12301145, 12261107, 12561020) and Yunnan Fundamental Research Projects (202401AU070123, 202601AT070048). J. Zhou was supported by CNPq/Brazil (Grant No. 304627/2023-2).

\medskip
{\bf Author Contributions:} All authors contributed equally to the writing and preparation of the manuscript.

\medskip
{\bf Data availability:}  Data sharing is not applicable to this article as no new data were created or analyzed in this study.

\medskip
{\bf Conflict of Interests:} The authors declare that they have no conflict of interest.

\end{document}